\newcounter{todocounter}
\theoremstyle{plain}
\newtheorem{thm}{Theorem}[section]
\newtheorem*{main}{Main~Theorem}
\newtheorem*{mult}{Multiparameter~Theorem}
\newtheorem{cor}[thm]{Corollary}
\newtheorem{lem}[thm]{Lemma} 
\newtheorem{prop}[thm]{Proposition}
\theoremstyle{definition}
\newtheorem{defi}[thm]{Definition}
\theoremstyle{remark}
\newtheorem{rem}[thm]{Remark}
\numberwithin{equation}{section}
\newtheorem{example}[thm]{Example}
\newcommand{\C}{\mathbb{C}}
\newcommand{\N}{\mathbb{N}}
\newcommand{\R}{\mathbb{R}}
\newcommand{\Q}{\mathbb{Q}}
\newcommand{\Z}{\mathbb{Z}}
\newcommand{\proj}{\mathbb{P}}
\newcommand{\rk}{\overline K_1}
\newcommand{\lk}{\underline K_1}
\DeclareMathOperator{\ord}{ord_0}
\DeclareMathOperator{\size}{S}
\theoremstyle{plain}
\newtheorem*{lem*}{Lemma} 
\newtheorem*{claim*}{Claim}
\theoremstyle{remark}
\newtheorem*{rem*}{Remark}
\newcommand{\sr}[1]%
{\ifmmode{}^\dagger\else${}^\dagger$\fi\ifvmode
\vbox to 0pt{\vss
 \hbox to 0pt{\hskip\hsize\hskip1em
 \vbox{\hsize3cm\raggedright\pretolerance10000
 \noindent #1\hfill}\hss}\vss}\else
 \vadjust{\vbox to0pt{\vss%
 \hbox to 0pt{\hskip\hsize\hskip1em%
 \vbox{\hsize3cm\raggedright\pretolerance10000%
 \noindent #1\hfill}\hss}\vss}}\fi%
}
\def\p{\partial}
\let\on=\operatorname
\def\<{\langle}
\def\>{\rangle}
\renewcommand{\o}{\circ}
\def\al{\alpha}
\def\tal{\widetilde \alpha}
\def\ga{\gamma}
\def\de{\delta}
\def\ep{\epsilon}
\def\la{\lambda}
\def\si{\sigma}
\def\ta{\tau}
\def\ph{\phi}
\def\vh{\varphi}
\def\ps{\psi}
\def\om{\omega}
\def\De{\Delta}
\def\La{\Lambda}
\def\Ps{\Psi}
\def\Om{\Omega}
\def\C{\mathbb{C}}
\def\N{\mathbb{N}}
\def\Q{\mathbb{Q}}
\def\R{\mathbb{R}}
\def\Z{\mathbb{Z}}
\def\cA{\mathcal{A}}
\def\cC{\mathcal{C}}
\def\cL{\mathcal{L}}
\def\cN{\mathcal{N}}
\def\sD{\mathscr{D}}
\def\sI{\mathscr{I}}
\def\sK{\mathscr{K}}
\def\sfC{\mathsf{C}}
\def\sfE{\mathsf{E}}
\def\kk{{m}}
\def\lifta{{\widehat a}}
\def\cover{\mathcal {CV}}
\title[Regularity of roots of polynomials]
{Regularity of  roots of polynomials}
\author[Adam Parusi\'nski and  Armin Rainer]
{Adam Parusi\'nski and Armin Rainer}
\address {Adam Parusi\'nski: Univ. Nice Sophia Antipolis, CNRS,  LJAD, UMR 7351, 06108 Nice, France}
\email{adam.parusinski@unice.fr}
\address{Armin Rainer: Fakult\"at f\"ur Mathematik, Universit\"at Wien, 
Oskar-Morgenstern-Platz~1, A-1090 Wien, Austria}
\email{armin.rainer@univie.ac.at}
\begin{document}

\begin{abstract}
We show that smooth curves of monic complex polynomials $P_a (Z)=Z^n+\sum _{j=1}^n a_j Z^{n-j}$, $a_j : I \to \C$ with $I \subset \R$ a compact interval,   
have absolutely continuous roots in a uniform way. 
More precisely, there exists a positive integer $k$ and a rational number $p >1$, both depending only on the degree $n$, 
such that if $a_j \in C^{k}$ then any continuous choice of roots of $P_a$     
is absolutely continuous with derivatives in $L^q$ for all $1 \le q < p$, in a uniform way with respect to $\max_j\|a_j\|_{C^k}$. 
The uniformity allows us to deduce also a multiparameter version of this result.
The proof is based on formulas for the roots of the universal polynomial $P_a$ in terms of its coefficients $a_j$ 
which we derive using resolution of singularities. 
For cubic polynomials we compute the formulas as well as bounds for $k$ and $p$ explicitly. 
\end{abstract}

\thanks{Supported by the Austrian Science Fund (FWF), Grants P~22218-N13 and P 26735-N25, and by ANR project STAAVF (ANR-2011 BS01 009).}
\keywords{Perturbation of complex polynomials, absolute continuity of  roots, $L^p$ regularity, resolution of singularities}
\subjclass[2010]{
26C10,  
26A46, 
30C15, 
32S45} 

\maketitle

\tableofcontents

\section*{Introduction}

This paper is dedicated to the solution of a basic problem in perturbation theory and differential analysis.
Given a monic polynomial with smooth coefficients (or a matrix with smooth entries) it is natural to ask for the 
optimal regularity of the roots (or of the eigenvalues). For instance, this question arises in finding  
local solutions of partial differential equations with multiple characteristics. 

In connection with the study of a class of pseudodifferential systems,  
Spagnolo \cite{Spagnolo00} asked whether a smooth ($C^\infty$) curve of monic complex polynomials admits a 
locally absolutely continuous parameterization of its roots. And if so, whether it is possible to choose the 
absolutely continuous roots uniformly with respect to the coefficients on compact subintervals. 
We answer these questions affirmatively and prove the following stronger result; see also Theorem~\ref{theorem}. 
Our proof builds on the recent result of Ghisi and Gobbino \cite{GhisiGobbino13} who found the optimal regularity of 
radicals of functions that we combine with the resolution of singularities.

\begin{main}
	For every $n\in \N_{>0}$ there is $k=k(n) \in \N_{>0}$ and  $p=p(n)>1$ such that the following holds.  
	Let $I\subset \R$ be a compact interval and let 
	\[
		P_{a(t)}(Z) = Z^n + \sum_{j=1}^n a_j(t) Z^{n-j} \in C^{k}( I)[Z] 	
	\] 
	be a monic polynomial with coefficients $a_j \in C^{k}( I)$, $j = 1,\ldots,n$.   
	\begin{enumerate}
		\item 
		Let $\la_j \in C^0(I)$, $j = 1,\ldots,n$, be a continuous parameterization of the roots of $P_a$ on $I$.
		Then the distributional derivative of each $\la_j$ in $I$ is a measurable function $\la_j' \in L^q(I)$ for every 
		$q \in [1,p)$. 
		In particular, each $\la_j \in W^{1,q}(I)$ for every $q \in[1,p)$.
		\item This regularity of the roots is uniform.  Let $\{P_{a_\nu} ; \nu \in \cN\}$,
		\[
			P_{a_\nu(t)}(Z) = Z^n + \sum_{j=1}^n a_{\nu,j}(t) Z^{n-j} \in C^{k}( I)[Z], ~\nu \in \cN, 	
		\] 
		be a family of curves of polynomials, indexed by $\nu$ in some set $\cN$, so that the set of coefficients 
		$\{a_{\nu,j} ; \nu \in \cN, j=1,\ldots,n\}$ is bounded in $C^{k}( I)$. 
		Then the set
		\[
			\qquad \{\la_{\nu} \in C^0(I);  P_{a_\nu}(\la_\nu)=0
			\text{ on $I$}, ~\nu \in \cN\}
		\]		
		is bounded in $W^{1,q}(I)$ for every $q \in [1,p)$.	\end{enumerate}	
\end{main}

$L^q$ denotes the Lebesgue space and $W^{1,q}$ the Sobolev space with respect to Lebesgue measure. 
We want to stress the fact that a continuous curve of monic complex polynomials $P_{a(t)}$, $t \in \R$, allows for 
a continuous parameterization of its roots. This is no longer true if the parameter space has more than one dimension 
due to monodromy. For multiparameter families of polynomials we obtain the following result; see also Theorem~\ref{multpar}.  

\begin{mult}
	Let $k=k(n) \in \N_{>0}$ and  $p=p(n)>1$ be as above.  
	Let $U \subset \R^m$ be open and let $P_{a(x)}(Z) \in C^{k}(U)[Z]$ 	
	be a monic polynomial with coefficients $a_j \in C^{k}(U)$, $j = 1,\ldots,n$.   
	\begin{enumerate}
		\item 
		Let $\la \in C^0(V)$ represent a root of $P_a$, i.e., $P_a(\la) = 0$, on a relatively compact open subset 
		$V \Subset U$. 
		Then the distributional gradient of $\la$ in $V$ is a measurable function $\nabla \la \in [L^q(V)]^m$ for every 
		$q \in [1,p)$. 
		In particular, $\la \in W^{1,q}(V)$ for every $q \in[1,p)$.
		\item The regularity of the roots is uniform. Let $\{P_{a_\nu} ; \nu \in \cN\}$
		be a family of polynomials, indexed by $\nu$ in some set $\cN$, so that the set of coefficients 
		$\{a_{\nu,j} ; \nu \in \cN, j=1,\ldots,n\}$ is bounded in $C^{k}(U)$. 
		Let $V \Subset U$. 
		Then the set
		\[
			\qquad \{\la_{\nu} \in C^0(V);  P_{a_\nu}(\la_\nu)=0
			\text{ on $V$}, ~\nu \in \cN\}
		\]			
		is bounded in $W^{1,q}(V)$ for every $q \in [1,p)$.	\end{enumerate}
\end{mult}

In \cite{Spagnolo00} Spagnolo proved that the pseudodifferential $n \times n$ system
\begin{align*} 
  u_t + i A(t,D_x) u + B(t,D_x) u = f(t,x), \qquad (t,x) \in I \times U,
\end{align*}
where $A(t,\xi)$, $B(t,\xi)$ are matrix symbols of order $1$ and $0$, respectively, and $A(t,\xi)$ is homogeneous 
of degree $1$ in $\xi$ for $|\xi|\ge 1$,  
is locally solvable in the Gevrey class $G^s$ for
$1 \le s \le n/(n-1)$  
and semi-globally solvable in $G^s$ for $1 < s < n/(n-1)$ under the following assumptions: 
the eigenvalues of $A(t,\xi)$ admit a parameterization $\ta_1(t,\xi),\ldots,\ta_n(t,\xi)$ such that each $\ta_j(t,\xi)$
is absolutely continuous in $t$, uniformly with respect to $\xi$, i.e.,
\begin{align*}
	\tag{$\cA_1$}  
	|\p_t \ta_j(t,\xi)| \le \mu(t,\xi) (1+|\xi|^2)^{\frac{1}{2}},
\quad  \text{ with } \mu(~,\xi) \text{ equi-integrable on } I,
\end{align*}
and for each $\xi$ the imaginary parts of the $\ta_j(t,\xi)$ do not change sign for varying $t$ and $j$, i.e., 
\begin{align*}
	\tag{$\cA_2$}  
	\forall \xi \quad \text{ either } \on{Im} \ta_j(t,\xi) \ge 0,\quad \forall t,j, 
	\quad \text{ or } \on{Im} \ta_j(t,\xi) \le 0,\quad \forall t,j.
\end{align*}
Our Main Theorem implies that the Assumption~\thetag{$\cA_1$} is automatically satisfied. 
Indeed, this follows by applying the Main Theorem to the characteristic polynomial of the matrix 
$(1+|\xi|^2)^{-\frac{1}{2}}A(t,\xi)$ and noting that the entries of $(1+|\xi|^2)^{-\frac{1}{2}}A(t,\xi)$ and its 
iterated partial derivatives with respect to $t$ are globally bounded in $\xi$, since $A(t,\xi)$ is a symbol of order 
$1$. 

Spagnolo formulated the removal of Assumption~\thetag{$\cA_1$} as an open problem in \cite{Spagnolo00}, p.~1122, and he 
successfully tackled the case of quadratic and cubic polynomials in \cite{Spagnolo99}. Note that the problems of smoothly 
choosing roots of polynomials on one hand and eigenvalues of \emph{arbitrary} quadratic matrices on the other hand 
are equivalent; whereas the perturbation theory for normal matrices is easier and allows for stronger results, 
cf.~Rainer \cite{RainerN} and references therein. 

We would like to remark that our result represents a complex analogue of Bronshtein's Theorem on the regularity of 
the roots of hyperbolic polynomials. A monic polynomial is called hyperbolic if all its roots are real. 
Bronshtein's Theorem, first proved in Bronshtein \cite{Bronshtein79} and generalizing the classical Glaeser inequality 
\cite{Glaeser63R}, states that any continuous parameterization of the 
roots of a hyperbolic polynomial of degree $n$ with $C^{n-1,1}$ coefficients is locally Lipschitz. It plays a crucial 
role for weakly hyperbolic Cauchy problems. Different proofs appeared in Wakabayashi \cite{Wakabayashi86} and 
in Parusi\'nski and Rainer \cite{ParusinskiRainerHyp}.  

In the absence of hyperbolicity the roots cannot fulfill a Lipschitz condition and in a certain sense absolute 
continuity is the best one can hope for; in fact the degree of summability $p$ tends to $1$ as $n$ goes to $\infty$.   
The first result towards absolute continuity of the roots 
is probably Lemma~1 in Colombini, Jannelli, and Spagnolo \cite{CJS83} which states that 
for a real-valued non-negative function $f$ of class $C^{k,\al}$ on a compact interval $I$, with 
$k \in \N_{\ge 1}$ and $0 \le \al \le 1$, the radical $f^{1/(k+\al)}$ is absolutely continuous on $I$ and satisfies
\[
\|(f^{\frac{1}{k+\al}})'\|_{L^1(I)}^{k+\al} \le C(k,\al,I) \|f\|_{C^{k,\al}(I)}.
\] 
Tarama \cite{Tarama00} extended this lemma to real-valued functions (not necessarily non-negative). 
A better summability for the weak partial derivatives of $f^{1/(k+1)}$ was obtained by Colombini and Lerner \cite{CL03} 
for non-negative $C^{k+1}$ functions $f$ of several real variables.

The case of radicals of functions was completely settled recently by Ghisi and Gobbino \cite{GhisiGobbino13} by 
finding their optimal regularity. They showed that, if $f$ is a real-valued continuous function 
and there exists $g \in C^{k,\al}(I)$ so that $|f|^{k+\al}=|g|$ on $I$, then   
$f' \in L_w^p(I)$, where $\frac{1}{p}+\frac{1}{k+\al}=1$, and
\[
\|f'\|_{p,w,I} \le 
C(k) \max\Big\{[\on{H\ddot{o}ld}_{\al,I}(g^{(k)})]^{\frac{1}{k+\al}} |I|^{\frac{1}{p}}, 
\|g'\|_{L^\infty(I)}^{\frac{1}{k+\al}}\Big\};
\]
in particular, $f \in W^{1,q}(I)$ for every $q \in [1,p)$.  
Here $L^p_w(I)$ denotes the weak Lebesgue space equipped with the quasi-norm 
$\|f\|_{p,w,I} := \sup_{r\ge 0} \big\{r \cdot \cL^1(\{t \in I ; |f(t)| > r\})^{\frac{1}{p}}\big\}$,
where $\cL^1$ is the one dimensional Lebesgue measure. 
By $\on{H\ddot{o}ld}_{\al,I}(g^{(k)})$ we mean the $\al$-H\"older constant of $g^{(k)}$ on $I$, and $|I|=\cL^1(I)$ is the 
length of the interval $I$. 
Ghisi and Gobbino also provided examples that show that the assumptions as well as the conclusion in their theorem are 
best possible. 
We use this result in a substantial way.

The mentioned results all treat special cases, where the algebraic structure of the polynomials is very simple: 
the roots are either given by  
radicals or can be expressed by radicals (by Cardano's formulas).  
A different approach was pursued in Rainer \cite{RainerAC}, where 
no restrictions on the algebraic structure of the polynomial were imposed. Under the assumption that no two roots meet 
with infinite order of contact it was shown that the roots of a $C^\infty$ curve of monic polynomials are locally 
absolutely continuous.
We also mention Rainer \cite{RainerQA}, where it was proved that the roots of a monic polynomial whose coefficients are 
functions in several variables that belong to any quasianalytic class satisfying some stability properties 
admit a parameterization by (special) functions of bounded variation.  

Our proof of the Main Theorem is based on formulas for the roots of the universal monic polynomial 
$P_a$ in terms of its 
coefficients $a=(a_1,\ldots,a_n)\in \C^n$. The derivation of these formulas represents the third major result of this paper; 
see Theorem~\ref{roots}. Using Hironaka's resolution of singularities \cite{Hironaka64}, we construct a tower of 
smooth principalizations 
$$M_1 = \C^n  \stackrel{ \sigma_2 }\longleftarrow M_2  \stackrel{ \sigma_{3,2} }\longleftarrow  M_3
\stackrel{ \sigma_{4,3} }\longleftarrow  \cdots \stackrel{ \sigma_{n,n-1} }\longleftarrow M_{n} $$
which successively principalize the generalized discriminant ideals 
$\sD_m \subset \C[a]$, $m=2,\ldots,n$, 
that encode the stratification of the space of polynomials by root multiplicity. 
In fact, the zero set of $\sD_m$ is exactly the set of those $a \in \C^n$ for which $P_a$ has at most $m-1$ distinct 
roots. 
We show that, locally on $M_{n}$, the roots of the pulled back polynomial $P_{\si_n^* (a)}$ are given by rational linear combinations 
$\sum_{m=1}^n A_m \, \vh_m \o \si_{n,m}$ where 
\[
	\varphi_\kk  = f_\kk^ {\al_\kk}  
	\psi_\kk (y_{\kk,1}^{1/q_\kk}, \ldots , y_{\kk, r_\kk}^{1/q_\kk}, y_{\kk,r_\kk+1}, \ldots , y_{\kk,n}).
\]
Here $\si_m = \si_2 \o \si_{3,2} \o \cdots \o \si_{m,m-1}$, $\si_{n,m} = \si_{m+1,m} \o \cdots \o \si_{n,n-1}$, 
$f_m \in \sD_m$ is a local generator of $\si_m^*(\sD_m)$, $\psi_\kk$ is a convergent power series, 
$q_m \in \N_{\ge 1}$, $\al_m \in \frac{1}{q_m} \N_{\ge 1}$,   
and 
$(y_{\kk,i})$ is a privileged system of local coordinates so that $f_m^{-1}(0)$ 
is given by $y_{\kk,1}\cdots y_{\kk,r_m}=0$ (cf.\ Subsection \ref{coordinates} and Definition \ref{localdata}).   
These formulas are  obtained in a natural way by a consecutive factorization procedure of  the pull-backs $P_{\si_i^* (a)}$, $i=2, \ldots, n$, so that each step contributes exactly to one summand.  
Thanks to these formulas we are able to reduce the problem to radicals of functions and use the result 
of Ghisi and Gobbino (cf.\ Lemma \ref{estimate}).    

The paper is divided into three parts. The first part presents the three main results of this paper. Section 
\ref{sec:formulas} is devoted to the formulation of the result on the formulas for the roots; see Theorem 
\ref{roots}. In Section \ref{sec:functspaces} we mainly fix notation on function spaces. 
The Main Theorem \ref{theorem} is proved in Section \ref{absolutecontinuity}, assuming validity of Theorem \ref{roots}, 
and the Multiparameter Theorem \ref{multpar} is deduced in Section \ref{sec:mult}.

The second part of the paper is dedicated to the proof of Theorem \ref{roots}. 
The strategy of the proof is briefly outlined in Section \ref{strategy}. 
In Section \ref{principality} we find a convenient criterion of principality of the ideals $\sD_m$. 
In Sections \ref{convexity} and \ref{splitting} we further develop the necessary tools utilized 
in the proof of Theorem \ref{roots} which is finally carried out in Section \ref{proofofmaintheorem}.

In the third part we illustrate our method of proof by discussing the case of cubic polynomials 
in detail. Here the resolution is explicit, and we can specify more precisely the degree of differentiability of 
the coefficients and the degree 
of summability of the derivative of the roots, 
namely $k(3) = 6$ and $p(3) = 6/5$.

\textbf{Acknowledgement.}  Part of the work was done while the second author enjoyed the hospitality of the 
mathematics department at the university of Nice.

\textbf{Notation and terminology.}  By a \emph{normal crossing} we mean a function that is locally equivalent to a monomial, i.e. equals a monomial times an analytic unit.   The zero set of an ideal $\sI$ will be denoted by $V(\sI )$.  For two real-valued functions $\varphi$ and $\psi$  we write  $\varphi \sim \psi$  if  there exists $C>0$ such that $\varphi \le C \psi$ and $\psi \le C\varphi$.   By  $\lceil x \rceil$ we denote the celling  function,  that is the least integer bigger than or equal to $x$.  


\part{Absolute continuity of roots}\label{part1}


\bigskip

\section{Formulas for the roots} \label{sec:formulas}

\subsection{Generalized discriminant ideals}  

Let 
\begin{align}\label{polynomial}
P_a (Z)=Z^n+\sum _{j=1}^n a_j Z^{n-j}
\end{align}
be a unitary polynomial with coefficients $a=(a_1, \ldots , a_n )\in \C^n$.   
We denote by $\xi (a) = \{ \xi_1 (a), \ldots , \xi _n(a)\}$ the unordered set of roots of $P_a$ 
and assign to $a_i$ the weight $i$ so that homogeneous permutation invariant polynomials in 
$\xi$ are precisely the weighted homogeneous polynomials in $a$.  

Let $N\in \N$ be a large constant fixed throughout the paper ($N\ge \max_{1\le s\le n} \binom n s$).  
For $2\le \kk \le n$ we denote by  $\sD_{N,\kk} $, or simply by  $\sD_{\kk} $,  the ideal 
of $\C[a]$ generated by all $f ^ {N!/ s} \in \C[a]$, where 
$f$ runs over all homogeneous polynomials in $\prod _{i\ne j \in I } (\xi _i -\xi_j)$, $I\subset \{1, \ldots , n\}$, 
$|I|=\kk$, of degree $s\le N$ in $\xi$, that are invariant by the permutations of $\xi_i$.

\begin{example}
$\sD_{N,n} $ is the principal ideal generated by the $N!/n(n-1)$-th power of the discriminant of $P_a$.
\end{example}

By replacing $Z$ by $Z- a_1/n$ we define a new polynomial 
$$P_{\hat a} (Z) = Z^n+ \hat a_2 Z^{n-2} + \cdots + \hat a_n:= P_a(Z - a_1/n).  $$ 
Each $\hat a_j$ is a weighted homogeneous polynomial in the $a_i$'s of the weighted degree $j$.  
This transformation $a\to \hat a$, called \emph{Tschirnhausen transformation}, shifts the roots of $P_a$ 
 by  $a_1/n$. Since the polynomials in $\prod _{i\ne j \in I } (\xi _i -\xi_j)$
are invariant by shifts of the roots, Tschirnhausen transformation does not change the ideal  $\sD_{\kk} $.   
Therefore, in what follows, we may suppose without loss of generality  that $P_a$ is in \emph{Tschirnhausen 
form} 
\begin{align}\label{Tschirnhausen}
P_a (Z)=Z^n+ a_2 Z^{n-2} + \cdots + a_n
\end{align}

\begin{prop}\label{firststep}
Suppose that $P_a$ is in Tschirnhausen form \eqref{Tschirnhausen}.  Then 
$\sD_{N,2}$ is the ideal of $\C[a_2, \ldots, a_n]$ generated by the weighted homogeneous polynomials 
in $a_2, \ldots, a_n$ of weighted degree $N!$.  
\end{prop}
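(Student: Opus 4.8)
The plan is to prove the two inclusions separately. Write $R=\C[a_2,\dots,a_n]$, graded by weighted degree (so $a_i$ has degree $i$), and let $J\subseteq R$ be the ideal generated by all weighted homogeneous polynomials of weighted degree $N!$; thus $J$ is the monomial ideal on the monomials $a^b=a_2^{b_2}\cdots a_n^{b_n}$ with $\sum_i ib_i=N!$, and $J=R\cdot R_{N!}$, where $R_{N!}$ denotes the degree-$N!$ part of $R$.

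\smallskip\noindent\emph{The inclusion $\sD_{N,2}\subseteq J$.} A generator $f^{N!/s}$ of $\sD_{N,2}$ is built from an $S_n$-invariant $f$ that is homogeneous of degree $s\le N$ in $\xi$; since $1\le s\le N$ we have $s\mid N!$, so $f^{N!/s}\in\C[a]$ is an $S_n$-invariant polynomial, homogeneous of degree $N!$ in $\xi$. Under the identification $\C[\xi]^{S_n}=\C[a_1,\dots,a_n]$ (the $a_i$ being $\pm$ the elementary symmetric functions of the $\xi_i$) a homogeneous $S_n$-invariant of degree $d$ becomes weighted homogeneous of weighted degree $d$; as $a_1=0$ in Tschirnhausen form, $f^{N!/s}$ lies in $R$ and is weighted homogeneous of weighted degree $N!$. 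Hence $\sD_{N,2}\subseteq J$.

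\smallskip\noindent\emph{The inclusion $J\subseteq\sD_{N,2}$.} Both ideals are generated by their degree-$N!$ parts ($J$ by construction, $\sD_{N,2}$ because all its generators sit in weighted degree $N!$), so it suffices to show $R_{N!}\subseteq\sD_{N,2}$. I will use two structural facts. (i) Since $N!$ is even, every monomial of weighted degree $N!$ is fixed by the involution $\tau\colon a_i\mapsto(-1)^ia_i$, which acts on $a^b$ by the sign $(-1)^{\sum_i ib_i}$; more generally $R_s=(R^\tau)_s$ for every even $s$. (ii) The fixed ring $R^\tau$ equals the subring $\widetilde R\subseteq R$ of $S_n$-invariant polynomials in the squared differences $D_{ij}=(\xi_i-\xi_j)^2$ (for $m=2$ one has $\prod_{i\ne j\in I}(\xi_i-\xi_j)=-D_{ij}$ when $I=\{i,j\}$). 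Here $\widetilde R\subseteq R^\tau$ because polynomials in the $D_{ij}$ are invariant under $\xi\mapsto-\xi$; conversely, modulo the ideal generated by $e_1=\sum_i\xi_i$ the even-degree part of $\C[\xi]$ is generated by its degree-$2$ part, and the $\binom n2$ quadratic forms $D_{ij}$ are linearly independent modulo $e_1\cdot\langle\xi_1,\dots,\xi_n\rangle$ (a short direct computation), hence span it; taking $S_n\times\{\pm1\}$-invariants gives $R^\tau=\widetilde R$. In particular $\widetilde R$ is generated as a graded $\C$-algebra by $\{a_i:i\text{ even}\}\cup\{a_ia_j:i,j\text{ odd}\}$, all of weighted degree $\le 2n$.

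\smallskip\noindent\emph{Polarization and a Veronese step, and the main obstacle.} For any $f\in\widetilde R$ homogeneous of degree $s\le N$ the element $f^{N!/s}$ is a generator of $\sD_{N,2}$. Fix $s$ and let $f$ range over $(\widetilde R)_s$: expanding $\big(\sum_j\lambda_jg_j\big)^{N!/s}=\sum_{|v|=N!/s}\binom{N!/s}{v}\lambda^v g^v$ over a spanning set $\{g_j\}$ of $(\widetilde R)_s$, and using that the coefficients of a polynomial in the $\lambda_j$ lie in the $\C$-span of its values, we obtain that every product of $N!/s$ elements of $(\widetilde R)_s$ lies in $\sD_{N,2}$, i.e.\ $\C[(\widetilde R)_s]_{N!/s}\subseteq\sD_{N,2}$. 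Now choose $s\le N$ equal to a common multiple of the finitely many generator degrees of $\widetilde R$ — this is where the hypothesis that $N$ is a sufficiently large constant is used — so that $s$ is even and the Veronese algebra $\widetilde R^{(s)}=\bigoplus_k(\widetilde R)_{ks}$ is generated in degree $1$ (the standard fact that $A^{(s)}$ is standard graded once $s$ is a common multiple of the generator degrees of a graded algebra $A$). Then $(\widetilde R)_{N!}=\C[(\widetilde R)_s]_{N!/s}\subseteq\sD_{N,2}$, and by (i) and (ii), $R_{N!}=(R^\tau)_{N!}=(\widetilde R)_{N!}\subseteq\sD_{N,2}$; hence $J\subseteq\sD_{N,2}$, and with the first inclusion $\sD_{N,2}=J$. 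I expect the main obstacle to be fact (ii) — essentially the linear independence of the $D_{ij}$ modulo $e_1\cdot\langle\xi\rangle$ — together with the bookkeeping needed to make the polarization/Veronese step fully explicit; the latter is elementary but genuinely exploits the divisibility of $N!$ (so that a suitable common multiple $s$ satisfies $s\le N$ and $N!/s\in\N$). Everything else is routine.
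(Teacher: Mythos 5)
Your first inclusion, the reduction to $R_{N!}\subseteq\sD_{N,2}$, facts (i)--(ii), and the polarization step are all correct. The gap is in the Veronese step: the ``standard fact'' you invoke --- that $A^{(s)}$ is generated in degree $1$ whenever $s$ is a common multiple of the generator degrees of $A$ --- is false. Take $A=\C[w,x,y,z]$ with $\deg(w,x,y,z)=(1,6,10,15)$ and $s=30=\operatorname{lcm}(1,6,10,15)$. The monomial $wx^4y^2z$ has degree $60=2s$, but it admits no monomial divisor of degree $30$: such a divisor $w^ax^by^cz^e$ would satisfy $a+6b+10c+15e=30$ with $a,e\le1$, $b\le4$, $c\le2$; parity forces $a=e$, the case $(a,e)=(0,0)$ gives $3b+5c=15$, hence $b=5$ or $c=3$, both out of range, and $(a,e)=(1,1)$ gives $3b+5c=7$, which has no admissible solution. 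Hence $A_{60}\ne A_{30}\cdot A_{30}$, so $A^{(30)}$ is not standard graded even though $30$ is a common multiple of all generator degrees. (Doubling the degrees produces the same failure with degrees $2,12,20,30$, all of which occur among the generator degrees of $\widetilde R$ once $n\ge 30$; your chosen $s$ would of course be a larger common multiple, but the principle you are leaning on is simply not true, so the burden of proof returns to you.)

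What your argument actually requires is that every monomial $a^b$ of weighted degree $N!$ factor as a product of $N!/s$ monomials of weighted degree $s$ --- an \emph{exact} bin-packing statement for the multiset consisting of $b_i$ copies of each weight $i$ --- and the example shows that divisibility of the item sizes into $s$ does not by itself deliver this. A repair is possible but is genuinely more work: one needs the correct Bourbaki-type statement $A_{n+e}=A_e\,A_n$ for $n$ above an explicit threshold, then a regrouping of the bounded-degree remainder so that one can re-polarize at a larger degree $me\le N$; this also pushes the required size of $N$ beyond the paper's stated bound $N\ge\max_s\binom ns$. So you have misplaced the main obstacle: fact (ii) is fine, while the Veronese step is where the real content lies, and it is missing. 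For comparison, the paper's own proof takes an entirely different and much shorter route: in Tschirnhausen form $\sum_i\xi_i=0$, so $\xi_i=\frac1n\sum_j(\xi_i-\xi_j)$, and every (symmetric) polynomial in the roots is a polynomial in the root differences; no polarization or Veronese argument appears.
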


\begin{proof}
If $P_a$ is in  Tschirnhausen form then $\sum_{i=1}^n \xi_i =0$ and hence  
$\xi_i = {\frac 1 n} \sum_j (\xi_i -\xi _j)$.  Therefore 
any polynomial in the $\xi_i$'s  is a polynomial in the $(\xi_i-\xi_j)$'s.  
\end{proof}

Thus the zero set $V(\sD_2)$ of $\sD_2$ is exactly the set of those $a$ for which $P_a$ has precisely 
one root (i.e. $a_2=\cdots= a_n=0$ if $P_a$ is in Tschirnhausen form \eqref{Tschirnhausen} and then 
this root is zero). 
In general, the zero set of $\sD_{\kk}$ consists of those $a\in \C^n$ for which $P_a$  has at most $\kk -1$ 
distinct roots, cf. Corollary \ref{zeroset} below.


\subsection{Smooth principalization of an ideal}  
Let $\sI=(f_1, \ldots, f_s)$ be an ideal of $\C [x]$, $x=(x_1, \ldots, x_m)$.  We shall assume 
$\sI\ne (0)$.  
The blowing-up of $\sI$, $\sigma : M=Bl_\sI \C^m \to \C^m$ can then be realized as follows, see for instance \cite{Hartshorne77} Example 7.17.2.  
The variety $M$ is the irreducible component of 
$$
 \{(x, y) \in \C^m \times \proj^{s-1}; f_i(x) y_j = f_j(x) y_i,\, i,j=1,\ldots,s\}
$$
that projects surjectively onto $\C^m$ and $\sigma$ is the projection on the first factor.  Then $M$ is 
the union 
of finitely many standard affine charts $\mathcal V_i$, where 
$$
\mathcal V_i= M \cap  \{(x, y_1, \ldots ,\widehat{y_i} \ldots ,y_s) \in \C^m \times \C^{s-1}; 
f_i(x) y_j = f_j(x) ,\, j=1,\ldots,\widehat i, \ldots ,s\}
$$ is a subvariety of the affine space 
$\C^m \times \C^{s-1}= \{(x, y) \in \C^m \times \proj^{s-1}; y_i=1\}$ with the coordinates $x_j$, $j=1, \ldots ,m$, and 
$y_j= f_j/f_i$, $j=1, \ldots, \widehat i, \ldots , s$.  The pullback of $\sI$ on $\mathcal V_i$ is generated 
by $f_i$,  since $f_j =y_j  f_i$   for $j\ne i$,
and hence is an invertible ideal, i.e.  principal and generated by a non-zero divisor.  The pullback of $\sI$ on $M$, denoted by  $\sigma^*(\sI)$, is an invertible sheaf of ideals.  

In general, for arbitrary $\sI$, the blow-up space $Bl_\sI \C^m$ is singular.   
Using Hironaka's resolution of singularities \cite{Hironaka64},  
it is possible to give an algorithm of principalization of any ideal by composing a sequence of 
smooth blow-ups (that is blowing-ups 
with smooth nowhere dense centers).  Then the blow-up space is non-singular.   
Such a principalization map is not unique.  
  We will use the following theorem  that is a special case of  Principalization III of 
\cite{Kollar07}, see also Theorem 1.10 of \cite{BM97}. 

\begin{thm}\label{kollar3}
Let $\sI \ne (0)$ be an ideal of $\C [x]$. 
Then there exists a composition of smooth blowing-ups, $\sigma :  M\to \C^m$,  such that : 
\begin{enumerate}
\item
$\sigma^*(\sI)$ is invertible and its zero set $V(\sigma^*(\sI))$ is a simple normal crossing divisor.   
\item
The restriction $\sigma|_{M\setminus V(\sigma^*(\sI))} : M\setminus V(\sigma^*(\sI)) \to \C^m \setminus V(\sI)$ 
is an isomorphism. 
\end{enumerate} 
\end{thm}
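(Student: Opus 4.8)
The plan is to obtain Theorem~\ref{kollar3} from Hironaka's resolution of singularities \cite{Hironaka64} in its constructive (algorithmic) form: the statement is precisely the \emph{principalization} of the ideal $\sI$, so rather than reproving it from scratch I would indicate how it reduces to the standard machinery of maximal contact and coefficient ideals, as carried out in Principalization~III of \cite{Kollar07} and Theorem~1.10 of \cite{BM97}.

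First I would set up the object to be resolved: the triple $(\C^m,\sI,E)$ with $E=\emptyset$ the (initially empty) simple normal crossing divisor that records the exceptional divisors produced along the way. The target is a triple in which the transform of $\sI$ is, locally, the ideal of a monomial in the components of $E$; since $\C^m$ is already smooth and $E$ starts empty, the only obstruction is the singular behaviour of $\sI$ itself. Then I would attach to each point $x$ the Hironaka resolution invariant $\mathrm{inv}(x)$, assembled from the order of $\sI$ at $x$, the descent to a hypersurface of maximal contact, the order of the associated coefficient ideal, and so on, together with bookkeeping of the position of $x$ relative to $E$. One checks that $\mathrm{inv}$ takes values in a well-ordered set, that its maximal-value locus $Z=\{\mathrm{inv}=\max\}$ is a smooth subvariety having normal crossings with $E$ (and nowhere dense as long as $\sI$ is not yet of the desired monomial form), and that $Z$ lies in the cosupport of $\sI$, hence in $V(\sI)$ together with exceptional loci lying over $V(\sI)$. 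One then blows up along $Z$, replaces $\sI$ by its controlled (weak) transform and $E$ by the union of the strict transform of $E$ with the new exceptional divisor — again simple normal crossing.

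The key step is the descent lemma: the maximal value of $\mathrm{inv}$ does not increase under such a blow-up, and after finitely many steps it strictly decreases; since the value set is well-ordered, the process terminates after a finite composition of smooth blow-ups $\sigma\colon M\to\C^m$. At termination $\sigma^*(\sI)$ is locally generated by a monomial in the equations of the components of $E$, which is a simple normal crossing divisor by construction, so $\sigma^*(\sI)$ is invertible and $V(\sigma^*(\sI))$ is a simple normal crossing divisor, giving (1). For (2), every center used was contained in $V(\sI)$ together with exceptional divisors lying over $V(\sI)$, so no blow-up was performed over $\C^m\setminus V(\sI)$ and $\sigma$ restricts to an isomorphism there.

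The \textbf{main obstacle} is exactly the descent lemma of the third step — showing that the invariant actually drops. This is the technical heart of every Hironaka-style argument: it requires the theory of maximal contact and coefficient ideals to reduce to one fewer variable, a careful analysis of how the invariant transforms under blow-up (distinguishing "old" and "new" exceptional components), and the verification that the invariant is independent of the auxiliary choices such as the maximal-contact hypersurface. An alternative route would be to first blow up $\sI$ itself to make it invertible (as recalled just before the theorem) and then apply embedded resolution to the resulting divisor on the possibly singular blow-up space, but this detour rests on the same machinery and does not simplify the crux, so I would invoke \cite{Kollar07,BM97} for this core estimate.
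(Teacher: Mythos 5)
Your proposal is correct and takes essentially the same route as the paper: Theorem~\ref{kollar3} is not proved in the text but is quoted as a special case of Principalization~III of \cite{Kollar07} (see also Theorem~1.10 of \cite{BM97}), and your outline of the maximal-contact/invariant-descent algorithm, with the observation that all centers lie over $V(\sI)$ so that part (2) follows, is an accurate summary of what those references establish. Deferring the descent lemma to \cite{Kollar07,BM97} is exactly what the paper does.
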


Recall that a simple normal crossing divisor $E\subset M$ is the union $E = \cup_i E_i$ of 
nonsingular hypersurfaces $E_i$ intersecting transversally. 


\subsection{Privileged coordinate system}  \label{coordinates}

The map $\sigma$ of Theorem \ref{kollar3}  is itself a blowing-up of an ideal $\sK \subset \C [x]$ such that 
$V(\sK)  = V(\sI)$.  
(Because $\sigma$ is birational and projective,  see \cite{Hartshorne77}, Ch. II Thm. 7.17 and 
Exercise 7.11 (c)).  Thus $M=Bl_\sK \C^m$.  Denote  $V(\sigma^*(\sI))$ by $E$ and let 
 $E = \cup_i E_i$ be the decomposition into irreducible components.  

Let $\sI=(f_1, \ldots, f_s)$ and $\sK=(h_1, \ldots, h_l)$ and let  
$\mathcal V\subset Bl_\sK \C^m$ be a standard affine chart of $Bl_\sK \C^m$.  Then on $\mathcal V$, 
$\sigma^* (\sI)$ is a principal ideal that is generated by some $f_i $, say $f_1 $, and  
$\sigma^* (\sK)$  by some $h_i $, say $h_1$.      
$\mathcal V$ is a subvariety of 
$\C^m \times \C^{l-1}$ with the coordinates $x_i$, $i=1, \ldots ,m$, and 
$ h_j/h_{1}$, $j=2, \ldots , l$.    
Each component $E_i\cap \mathcal V$
 is the zero set of a finite number of functions regular on $\mathcal V$, that is functions of the form $P/h_1^s$, 
 $s\in \N$, where $P$ is a polynomial in $x$.

  Let $p\in \mathcal V$. There is a neighborhood $\mathcal U$ of $p$ in $\mathcal V$, and a coordinate system 
$y_1, \ldots, y_m$ on $\mathcal U$, such that $y_i = P_i /h_1^{s}$,  $P_i \in \C[x]$, $s\in \N$,  and 
$E\cap  \mathcal U$ is given by $y_1 \cdots y_r=0$.  (By taking the maximum we may choose $s$ independent 
of $i$.) Then 
\begin{align}\label{normalcrossings}
 f_1 = unit \cdot \prod_{i=1}^r y_i^{n_i},  \quad h_1 = unit \cdot \prod_{i=1}^r y_i^{m_i}, 
\end{align}
with $n_i>0$ and $m_i> 0$.  Here by a unit we mean a function defined, analytic, and  nowhere vanishing  on $\mathcal U$.


\subsection{Tower of smooth principalizations.}

For  the ideals $\sD_{\kk}$, $\kk =2, \ldots, n$, 
we construct a tower of smooth principalizations  
\begin{align}\label{tower}
M_1 = \C^n  \stackrel{ \sigma_2 }\longleftarrow M_2  \stackrel{ \sigma_{3,2} }\longleftarrow  M_3 
\stackrel{ \sigma_{4,3} }\longleftarrow  \cdots \stackrel{ \sigma_{n,n-1} }\longleftarrow M_{n} .
\end{align}
We take as  $\sigma_2$ a smooth principalization of $\sD_{2}$ satisfying the conclusion of  Theorem \ref{kollar3}. 
Given  $ \sigma _2,  \sigma _{3,2}, \ldots, \sigma_{\kk,\kk-1}$,  we denote by  $\sigma_\kk :M_\kk\to \C^n$ the composition $\sigma_\kk = \sigma _2 
\circ \sigma _{3,2} \circ \cdots \circ \sigma_{\kk,\kk-1}$,  and take as 
$ \sigma_{\kk+1,\kk}$  a smooth principalization of $\sigma_\kk^*(\sD_{\kk +1})$.   Then $ \sigma_{\kk+1} = \sigma_\kk\circ  \sigma_{\kk+1,\kk}$ is  a smooth principalization of $\sD_{\kk +1}$.  
We denote $\sigma_{n,\kk} = \sigma_{\kk+1,\kk} \circ \cdots \circ \sigma_{n,n-1}$. 

By Subsection \ref{coordinates}, $\sigma _\kk$ is the blowing-up of an ideal $\sK_\kk \subset \C[a]$.  

\begin{defi}\label{localdata}
By  \emph{local data} $(f, h,  P_i,  s, r )$ for $p\in M_\kk$ we mean the following.  A polynomial $f \in \sD_{\kk}$ that generates $\sigma_\kk^*(\sD_{\kk})$ at $p$,  a polynomial  $h \in \sK_{\kk}$ that generates $\sigma_\kk^*(\sK_{\kk})$,  
a positive integer $s$ 
and polynomials $P_i$ such that  $y_i = P_i /h^{s}$, $i=1, \ldots , n$, is a privileged system of coordinates 
in a neighborhood $\mathcal U$ of $p$, and $r$ such that $f^{-1} (0) $ is given by $y_{1}  \cdots y_{r}=0$.
\end{defi}

We  fix  such local data for every $p \in M_\kk$  (but allow to replace the neighborhood $\mathcal U$ by a smaller one if necessary).


\begin{defi}\label{chain}
By a \emph{chain $\sfC= (p_{\kk}, f_{\kk}, h_\kk, P_{\kk,i}, s_\kk , r_\kk)$ for $p_n\in M_n$} 
we mean the  points  $p_\kk : = \sigma _{n,\kk} (p_n) $, $\kk=1, \ldots, n$, and the local data 
$(f_{\kk},  h_\kk,  P_{\kk, i}, s_\kk, r_\kk)$ for $p_\kk$.  We complete this data for $\kk=1$ by putting 
 $f_1 =h_1=1$, $P_{1,i}=a_i$, and $s_1=r_1=0$.   

When we specify the neighborhoods $\mathcal U_{\kk} \subset M_\kk$ of $p_\kk$ on which these local data are defined    
we always assume that $\sigma _{\kk,\kk-1} (\mathcal U_{\kk})\subset \mathcal U_{\kk-1}$.     
\[
  \xymatrix@C=1.5cm{
  M_1 & \ar[l]_{\si_2} M_2 & \ar[l]_{\si_{3,2}} M_3 &  \ar[l]_{\si_{4,3}} \cdots & \ar[l]_{\si_{n-1,n-2}} M_{n-1} &  \ar[l]_{\si_{n,n-1}} M_n \\
\ar@{^{ (}->}[u] \mathcal  U_1 & \ar@{^{ (}->}[u] \ar[l]_{\si_2} \mathcal  U_2 & \ar@{^{ (}->}[u] \ar[l]_{\si_{3,2}} 
  \mathcal  U_3 &  \ar[l]_{\si_{4,3}} \cdots 
  & \ar@{^{ (}->}[u] \ar[l]_{\si_{n-1,n-2}} \mathcal  U_{n-1} & \ar@{^{ (}->}[u] \ar[l]_{\si_{n,n-1}} \mathcal  U_n \\
  \ar@{}[u]|-*[@]{\in} p_1 & \ar@{}[u]|-*[@]{\in} \ar@{|->}[l]_{\si_2} p_2 & \ar@{}[u]|-*[@]{\in} \ar@{|->}[l]_{\si_{3,2}} p_3 &  
  \ar@{|->}[l]_{\si_{4,3}} \cdots & \ar@{}[u]|-*[@]{\in} \ar@{|->}[l]_{\si_{n-1,n-2}} p_{n-1} & \ar@{}[u]|-*[@]{\in} \ar@{|->}[l]_{\si_{n,n-1}} p_n 
  }
\]
\end{defi}
\medskip

We pull back the polynomial $P_a$ onto 
$M_\kk$ via $\sigma_\kk$, 
$$
P_{\sigma_\kk^*(a)} (Z) = Z^n + \sum _{i=1}^n( a_i\circ \sigma_\kk) Z^{n-i}.
$$
The  roots of $P_{\sigma_n^*(a)}$ are the pull-backs of the roots of $P_a$.    


\subsection{Formulas for the roots.} \label{ssec:formulas}

\begin{thm}\label{roots}{\rm [Formulas for the roots] } \\
Given a tower \eqref{tower},  
we may associate with every $p_\kk \in M_\kk$ a convergent power series $\psi_\kk$, 
an integer $q_\kk\ge 1$, and  a positive exponent $\al_\kk \in  \frac 1 {q_\kk}\N_{>0}$, such that the following holds.  
For any chain  $\sfC= (p_{\kk}, f_{\kk}, h_\kk, P_{\kk,i}, s_\kk , r_\kk)$  the roots of  
 $P_{\sigma_n^*(a)} $ in a neighborhood of  $p_n$ are given by  
 \begin{align}\label{sums}
 \sum_{\kk=1}^n A_\kk \,  \, {\varphi_\kk  \circ \sigma_{n,\kk}} ,
\end{align} 
 where  $ A_\kk\in \Q$ and 
 \begin{align}\label{varphik}
\varphi_\kk & = f_\kk^ {\al_\kk}  \psi_\kk (y_{\kk,1}^{1/q_\kk}, \ldots , y_{\kk, r_\kk}^{1/q_\kk}, y_{\kk,r_\kk+1}, \ldots , y_{\kk,n}) . 
\end{align}
\end{thm}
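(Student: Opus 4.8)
\emph{Overall strategy.} I will prove the formulas \eqref{sums}--\eqref{varphik} by induction on the level $\kk=1,\dots,n$, carried out in parallel with a consecutive factorization of the pulled-back polynomials $P_{\si_\kk^*(a)}$ along the tower \eqref{tower}; the $\kk$-th stage of the factorization contributes precisely the summand $A_\kk\,\vh_\kk$. At $\kk=1$ one takes the Tschirnhausen shift: on $M_1=\C^n$ set $\vh_1:=-a_1/n=\psi_1(a)$, a polynomial and hence a convergent power series (compatibly with $f_1=1$, $r_1=0$); then $P_{\hat a}(Z)=P_a(Z-a_1/n)$ is in the form \eqref{Tschirnhausen}, every root of $P_a$ is $\vh_1$ plus a root of $P_{\hat a}$, and the tower and all ideals $\sD_\kk$ are unchanged. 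We then carry $P_{\hat a}$ up the tower, keeping at level $\kk$ a factorization of $P_{\si_\kk^*(\hat a)}$ into monic factors such that, after the substitution $Z=\bigl(\sum_{\ell\le\kk}A_\ell\,\vh_\ell\circ\si_{\kk,\ell}\bigr)+f_\kk^{\al_\kk}W$ and division by a suitable power of $f_\kk^{\al_\kk}$, each factor is monic in $W$ with coefficients that are convergent power series in the privileged fractional-power coordinates $y_{\kk,1}^{1/q_\kk},\dots,y_{\kk,r_\kk}^{1/q_\kk},y_{\kk,r_\kk+1},\dots,y_{\kk,n}$ (Subsection \ref{coordinates}, Definition \ref{localdata}), and such that each factor of degree $\ge 2$ still has all its roots collapsing to one point at the base point. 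This last degeneracy is exactly what the level-$\kk$ ideal $\sD_\kk$ was built to detect: the locus where all $d$ roots of a degree-$d$ subfactor coincide lies in $V(\sD_{n-d+2})$, and the factors processed at level $\kk$ have degree $n-\kk+2$, so that the degrees decrease from $n$ down to $2$ as $\kk$ runs from $2$ to $n$.

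\emph{Inductive step, level $\kk$ to level $\kk+1$.} (i) \emph{Rescaling.} By Proposition \ref{firststep}, the principality criterion of Section \ref{principality} and the Newton-polyhedron estimates of Section \ref{convexity}, one chooses the local generator $f_{\kk+1}$ of $\si_{\kk+1}^*(\sD_{\kk+1})$ and an exponent $\al_{\kk+1}\in\tfrac1{q_{\kk+1}}\N_{>0}$, depending only on the point $p_{\kk+1}$, so that $f_{\kk+1}^{\al_{\kk+1}}$ equals, up to a unit, the current scale of the roots of each relevant factor; the substitution $Z=(\cdots)+f_{\kk+1}^{\al_{\kk+1}}W$, division by the appropriate power of $f_{\kk+1}^{\al_{\kk+1}}$, and adjunction of $q_{\kk+1}$-th roots of the privileged coordinates turn such a factor into a monic $\tilde P(W)$ over a ring of convergent fractional-power series for which the generalized discriminant ideal $\sD_2(\tilde P)$ is now a unit, i.e.\ $\tilde P$ no longer has all its roots coinciding, so it has at least two distinct roots at $p_{\kk+1}$. (ii) \emph{Splitting.} By the splitting lemma of Section \ref{splitting} --- a Hensel-type statement over the Henselian ring of convergent fractional-power series --- each such $\tilde P$ factors into monic polynomials of strictly smaller degree. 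The leading part peeled off at this stage (the value of a new factor at $p_{\kk+1}$ together with its ensuing normal-form expansion) is absorbed into $\psi_{\kk+1}$, giving $\vh_{\kk+1}=f_{\kk+1}^{\al_{\kk+1}}\psi_{\kk+1}(y_{\kk+1,1}^{1/q_{\kk+1}},\dots)$, while the finitely many combinatorial choices (which factor, which branch of each $q_{\kk+1}$-th root) are recorded by $A_{\kk+1}\in\Q$. (iii) \emph{Compatibility with the tower.} Since the generators $\prod_{i\ne j\in I}(\xi_i-\xi_j)$ of the $\sD_m$ transform monomially under the shift-and-rescale substitution, the discriminant data of the factors produced in (ii) agree, up to units and monomials in the privileged coordinates, with $\si_{\kk+1}^*(\sD_{\kk+2})$; hence the next principalization $\si_{\kk+2,\kk+1}$, which was built precisely to principalize $\si_{\kk+1}^*(\sD_{\kk+2})$, supplies the normal-crossing data needed to run stage $\kk+2$. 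After at most $n$ stages every factor is linear, and reading off a single root yields \eqref{sums}.

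\emph{Main obstacle.} The heart of the proof is ingredients (i) and (iii) together: one must show that the \emph{intrinsic} degeneration of the polynomial --- the hierarchy of colliding clusters of roots and the scales at which they collide --- is matched, level by level, by the \emph{extrinsic} tower of blow-ups \eqref{tower}, which was constructed abstractly from $\sD_2,\dots,\sD_n$ with no reference to any point or root. Concretely, after principalizing $\si_\kk^*(\sD_{\kk+1})$ and rescaling, the relevant factors must genuinely separate, and their residual discriminant data must again be governed by $\si_{\kk+1}^*(\sD_{\kk+2})$; this is where the principality criterion and the convexity estimates do the real work, and it is also what forces $q_\kk$, $\al_\kk$ and $\psi_\kk$ to depend on $p_\kk$ alone --- the Hensel factorization at $p_\kk$ depends only on the residue polynomial there, and the rescaling exponent is read off from the normal-crossing data of $f_\kk$ at $p_\kk$, independently of the chain above it. A pervasive technical point, needing care throughout, is that from level $2$ onward the coefficients involve fractional powers of the coordinates, so the ideals $\sD_m$ and their zero sets must be tracked over rings of convergent fractional-power series rather than over $\C[a]$.
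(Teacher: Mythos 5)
Your overall strategy --- consecutive Tschirnhausen normalization and Hensel-type splitting along the tower, with each level contributing exactly one summand $A_\kk\,\vh_\kk$, and positivity of $\al_\kk$ obtained by comparing the discriminant data of the factors with the pulled-back ideals $\si_\kk^*(\sD_\kk)$ --- is the same as the paper's. But there is a genuine gap in your combinatorial bookkeeping. You assert that ``the factors processed at level $\kk$ have degree $n-\kk+2$, so that the degrees decrease from $n$ down to $2$.'' This is false in general: the splitting at level $2$ produces two factors $P_b$, $P_c$ of degrees $n_1+n_2=n$ with both $n_1,n_2$ possibly $\ge 2$, and each of them (and, recursively, each of their subfactors) must be split further at later levels. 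The locus $V(\sD_\kk)$ is where $P_a$ has at most $\kk-1$ distinct roots; this bounds the largest cluster of coinciding roots by $n-\kk+2$ but does not make it equal to $n-\kk+2$, and there may be several clusters of size $\ge 2$ simultaneously. Consequently your inductive step, which tacitly assumes a single degree-$\ge 2$ factor waiting at each level whose scale is read off directly from $f_{\kk+1}$, does not go through as stated.

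What is actually needed --- and what Sections \ref{convexity} and \ref{splitting} of the paper deliver --- is: (a) the relation $\size_{a,\kk}\sim\size_{b,\kk_1}\size_{c,\kk_2}$, showing that principality of $\sD_{a,\kk}$ forces principality of the factors' ideals $\sD_{b,\kk_1}$, $\sD_{c,\kk_2}$ for suitable $\kk_1+\kk_2=\kk$ (Proposition \ref{firstsplitting}); (b) the monotonicity of the admissible $\kk_1(\kk)$, which yields strictly increasing inverse functions $\kk_b,\kk_c$ partitioning $\{3,\dots,n\}$ between the two factors (Remark \ref{kbkc}), so that each level of the tower serves exactly one node of the \emph{branching} splitting tree; and (c) the divisibility $g_{\kk_1}\mid f_\kk\mid g_{\kk_1}^{\kk}$ for $\kk=\kk_b(\kk_1)$ (Proposition \ref{secondsplitting}), which is what converts the factor's natural scale $g_{\kk_1}^{\al_{b,\kk_1}}$ into $f_\kk^{\al_\kk}$ with $\al_\kk=\al_{b,\kk_1}/\kk>0$ times a fractional power series in the privileged coordinates. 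You correctly identify the matching of intrinsic root clustering with the extrinsic tower as the main obstacle, but your proposed resolution rests on the incorrect degree count, and without (b) and (c) neither the assignment of exactly one $\vh_\kk$ to each level nor the strict positivity of $\al_\kk$ is established.
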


Theorem \ref{roots} will be proved in Section \ref{proofofmaintheorem}. 

\begin{rem}\label{ramification} 
Because $f_\kk$ is a normal crossing in $y_{\kk,i}$, cf.  \eqref{normalcrossings},  
$$f_\kk^{\al_\kk } \in \C\{(y_{\kk,1}^{1/q_\kk}, \ldots , y_{\kk, r_\kk}^{1/q_\kk}, y_{\kk,r_\kk+1}, \ldots , y_{\kk,n}) \}.$$
Hence $\varphi_\kk$ of   \eqref{varphik} is a fractional power series.  It can be  interpreted geometrically as follows.  Set 
\begin{align}\label{branching}
y_{\kk,i} = \begin{cases}
 t_{i}^{q_\kk} & \text {  if } i\le r_\kk  \\
 t_i  &  \text{ if }    i > r_\kk+1. 
\end{cases}
\end{align}
  Then  $\varphi_\kk$ is a convergent power series in 
$t=(t_1, \ldots , t_n)$.  There are neighborhoods $\mathcal U_\kk$  of $p_\kk$, 
$\sigma _{\kk,\kk-1} (\mathcal U_{\kk})\subset \mathcal U_{\kk-1}$, and their branched covers 
$\tau_\kk : \widetilde {\mathcal U}_\kk \to \mathcal U_\kk$, given by the formulas \eqref{branching}, such that 
$\psi_\kk\circ \tau_\kk$ and $\varphi_\kk\circ \tau_\kk$ are analytic on $\widetilde {\mathcal U}_\kk$.  
Since $\sigma_{\kk+1,\kk}^{-1}  (f_\kk^{-1}(0) )\subset f_{\kk+1} ^{-1} (0)$,  $y_{\kk,i}\circ  \sigma_{\kk+1,\kk}$, 
for $i\le r_\kk$,  is a normal crossings in $y_{\kk+1,1}, \ldots , y_{\kk+1, r_{\kk+1}}$ and therefore, we may 
suppose that $\sigma_{\kk+1,\kk} \circ \tau _{\kk+1}$ factors through $\tau_\kk$, changing $q_{\kk+1}$ if necessary.   Thus we obtain a sequence of branched covers $\tau_i$ making the following diagram commutative.
\[
  \xymatrix@C=1.5cm{
\widetilde   {\mathcal U}_1 & \ar[l]_{\widetilde \si_2} \widetilde  {\mathcal U}_2 &
 \ar[l]_{\widetilde \si_{3,2}} \widetilde  {\mathcal U}_3 
&  \ar[l]_{\widetilde \si_{4,3}} \cdots & \ar[l]_{\widetilde \si_{n-1,n-2}} \widetilde {\mathcal U}_{n-1} 
&  \ar[l]_{\widetilde \si_{n,n-1}} \widetilde {\mathcal U}_n \\
  \ar@{=}[u]  {\mathcal U}_1 & \ar@{<-}[u]_{\tau_2} \ar[l]_{\si_2}  {\mathcal U}_2 
  &  \ar@{<-}[u]_{\tau_3}  \ar[l]_{\si_{3,2}}  {\mathcal U}_3 &  \ar[l]_{\si_{4,3}} \cdots 
  & \ar@{<-}[u]_{\tau_{n-1}} \ar[l]_{\si_{n-1,n-2}}  {\mathcal U}_{n-1} & \ar@{<-}[u]_{\tau_n} \ar[l]_{\si_{n,n-1}} 
   {\mathcal U}_n  
  }
\]
Then Theorem  \ref{roots} says that the roots of  $P_{\tilde \si_n^*(a)} $ are  combinations of analytic functions on $\widetilde {\mathcal U}_n$ that are pull-backs of analytic functions on the $\widetilde {\mathcal U}_\kk$'s. 
\end{rem}

\begin{defi}\label{extendedchain}
By an \emph{extended chain} 
$\sfE= (p_{\kk}, f_{\kk}, h_\kk, P_{\kk,i}, s_\kk , r_\kk, \mathcal U_\kk)$ for $p_n\in M_n$ we mean a 
chain $\sfC= (p_{\kk}, f_{\kk}, h_\kk, P_{\kk,i}, s_\kk , r_\kk)$ and a system of neighborhoods  
  $\mathcal U_{\kk}$ of $p_\kk$ as in  Remark \ref{ramification}.  By Theorem \ref{roots} for every $p_n\in M_n$ there is an extended chain.  
\end{defi}

We filter the coefficient space $\C^n$ by the zero sets of discriminant ideals $\Sigma_\kk : = V(\sD_{\kk})$,
$$
\C^n \supset \Sigma_n \supset \cdots \supset \Sigma_2.  
$$
By Corollary \ref{zeroset}, $a\in \Sigma_\kk$ if and only if $P_a (Z)$ has at most $\kk-1$ distinct roots.  
If $a(t): \R \supset I \to \C^n$ is continuous 
then 
$\Omega_\kk := I \setminus a^ {-1} (\Sigma_\kk)$ defines a filtration by open subsets 
$
I \supset \Om_2 \supset \cdots \supset \Om_n$.  Because $\sigma_\kk $ is an isomorphism over 
$\C^ n\setminus \Sigma_\kk$, $a|_{\Om_\kk}$ has a lift $\lifta_\kk$ to $M_\kk$. For $\kk=n$ we write $\lifta := \lifta_n$.
\[	\xymatrix{
	  		&& M_n  \ar[d]^{\sigma_n} &&  \\
	  		I\supset \Om_n  \ar[rr]^{a} \ar[rru]^{\lifta}&& \C^n && 
      	}
\]

  \begin{lem}\label{addendum}{\rm [Addendum to Theorem \ref{roots}] } \\
Let $\sfE= (p_{\kk}, f_{\kk}, h_\kk, P_{\kk,i}, s_\kk , r_\kk,\mathcal U_\kk)$ be an extended  chain for $p_n\in M_n$  and let  $J$ be a connected component of $ \lifta^{-1} (\mathcal U_{n}) $. 
Let $\lambda (t)$ be a continuous root of $P_{a(t)}(Z)$ on $J$.  Then there are continuous choices of radicals 
$f_\kk^ {\al_\kk} (a(t))$ and $y_{\kk,1}^{1/q_\kk} (\lifta_\kk(t)), \ldots , y_{\kk, r_\kk}^{1/q_\kk}
 (\lifta_\kk(t))$, 
  such that 
 \begin{align}\label{sumst}
\lambda(t) =  \sum_{\kk=1}^n A_\kk \,  \, \varphi_\kk (t),
\end{align} 
 where 
 \begin{align}\label{varphikt} 
\varphi_\kk (t) & = f_\kk^ {\al_\kk}  (a(t))  \psi_\kk (y_{\kk,1}^{1/q_\kk}  (\lifta_\kk(t)), \ldots , y_{\kk, r_\kk}^{1/q_\kk}  (\lifta_\kk(t)) , y_{\kk,r_\kk+1} (\lifta_\kk(t)), \ldots , y_{\kk,n} (\lifta_\kk(t)) ). 
\end{align}
\end{lem}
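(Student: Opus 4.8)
The plan is to upgrade the pointwise, germ-level identity of Theorem~\ref{roots} to an identity of \emph{continuous} functions along the curve, on the connected component $J$. First I would restrict all the local data to the neighborhood $\mathcal U_n$ and pull everything back along the lift $\lifta = \lifta_n : \Om_n \supset J \to M_n$ (and along $\lifta_\kk = \sigma_{n,\kk}\circ\lifta$ into $M_\kk$), so that every ingredient appearing in \eqref{varphik} becomes a function of $t \in J$. The analytic factors $\psi_\kk$ and the coordinates $y_{\kk,r_\kk+1},\ldots,y_{\kk,n}$ cause no trouble: they are single-valued analytic functions on $\mathcal U_\kk$, so composing with the continuous maps $\lifta_\kk$ yields continuous functions of $t$ directly. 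The only genuinely multivalued ingredients are the radicals $f_\kk^{\al_\kk}(a(t))$ (a fractional power of a scalar function of $t$) and the $q_\kk$-th roots $y_{\kk,i}^{1/q_\kk}(\lifta_\kk(t))$ for $i \le r_\kk$; these are precisely what the statement asks us to choose continuously.

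The key step is therefore a continuous-selection argument for those radicals. On $J$ the root $\lambda(t)$ is given, continuous; on the open dense subset $J \cap a^{-1}(\C^n\setminus\Sigma_n)$ the lift $\lifta$ maps into $M_n\setminus V(\si_n^*(\sD_n))$, hence, via the diagram of Remark~\ref{ramification}, into the complement of all the divisors $f_\kk^{-1}(0)$; there all the quantities $f_\kk^{\al_\kk}(a(t))$, $y_{\kk,i}(\lifta_\kk(t))$ are nonzero, so locally one can pick continuous branches of each radical and, after adjusting the branches by roots of unity (the ambiguity of an $m$-th root), Theorem~\ref{roots} forces the finite rational combination $\sum_\kk A_\kk\,\varphi_\kk(t)$ to equal \emph{some} continuous root of $P_{a(t)}$; since $\lambda$ is one such root and roots are locally separated off $\Sigma_n$, continuity pins down which branch must be taken so that the sum equals $\lambda(t)$. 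This defines the branches on the open dense set $J\setminus a^{-1}(\Sigma_n)$. The remaining point is to check that these branch choices extend continuously across the (closed, nowhere dense) set $a^{-1}(\Sigma_n)\cap J$: here I would use that $f_\kk^{\al_\kk}(a(t))$ tends to $0$ as $a(t)\to\Sigma_n$ (because $f_\kk\in\sD_\kk$ vanishes there and $\al_\kk>0$), so the radical has a continuous extension by the value $0$ on that set, and likewise the $q_\kk$-th roots $y_{\kk,i}^{1/q_\kk}$ extend continuously by $0$ along the components of $E$ they vanish on; the analytic part $\psi_\kk$ stays bounded, so each $\varphi_\kk$ extends continuously, and the identity \eqref{sumst}, valid on a dense set, persists on all of $J$ by continuity of both sides.

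I expect the main obstacle to be the gluing/monodromy bookkeeping in the middle step: a priori the continuous branches of the various radicals are only defined locally on the dense open set, and one must argue that the \emph{finite} ambiguity (a root of unity per radical per local patch) can be fixed globally and consistently on each connected component $J$ so that the total sum equals the single prescribed continuous function $\lambda$. The clean way to organize this is to note that the set of $t$ where a given choice of branches yields $\lambda(t)$ is both open and closed in the relevant open patch (openness because nearby roots stay separated, closedness by continuity), so it is a union of connected components; connectedness of $J$ then propagates a single consistent choice. Once the branches are fixed on the dense open set, the extension across $a^{-1}(\Sigma_n)$ is the routine estimate $|f_\kk^{\al_\kk}(a(t))| \le C\,|f_\kk(a(t))|^{\al_\kk} \to 0$ together with boundedness of $\psi_\kk$ near $p_\kk$, which I would not spell out in full.
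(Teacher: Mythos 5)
Your proposal reaches the right conclusion but is organized quite differently from the paper's proof, and one of its three steps is vacuous. The vacuous step first: $J$ is by definition a connected component of $\lifta^{-1}(\mathcal U_n)$, and $\lifta=\lifta_n$ is only defined on $\Om_n = I\setminus a^{-1}(\Sigma_n)$, so $J\subset\Om_n$ and the set $a^{-1}(\Sigma_n)\cap J$ that you propose to extend across is empty. This is precisely the paper's ``crucial observation'': on \emph{all} of $J$ the functions $f_\kk(a(t))$ and $y_{\kk,1}(\lifta_\kk(t)),\ldots,y_{\kk,r_\kk}(\lifta_\kk(t))$ are nowhere zero, so continuous (indeed $C^k$, or real analytic) branches of every radical exist globally on the interval $J$ with no extension-by-zero argument needed; your ``open dense subset'' is all of $J$. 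As for the selection of branches, the paper does not match branch choices against $\lambda$ in the final formula; it re-runs the inductive splitting from the proof of Theorem \ref{roots} along the curve: after the Tschirnhausen shift, $f_2^{-\al_2}(t)\lambda(t)$ is a root of $Q_{\bar a(t)}$, and since $Q_{\bar b(t)}$ and $Q_{\bar c(t)}$ have no common roots this assigns $\lambda$ to exactly one factor, locally constantly and hence constantly on the connected $J$; iterating down to linear factors produces the terms $A_\kk\varphi_\kk$ one at a time and fixes compatible radicals automatically. Your alternative --- fix continuous branches, observe that the sum is \emph{some} continuous root, then run an open--closed argument over the finitely many branch/leaf combinations --- does work, but only if you restrict to \emph{compatible} branch choices: by \eqref{normalcrossings} $f_\kk$ is a unit times a monomial in $y_{\kk,1},\ldots,y_{\kk,r_\kk}$, so the branch of $f_\kk^{\al_\kk}$ is not independent of the branches of $y_{\kk,i}^{1/q_\kk}$, and for an incompatible choice the sum need not be a root, openness fails, and connectedness alone does not conclude. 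This is exactly the ``bookkeeping'' you flag as the main obstacle; the paper's inductive tracking of $\lambda$ through the factorization is the device that sidesteps it, and you would need to make your notion of admissible branch combination precise to close that point.
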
 

Lemma \ref{addendum} will be proved in Subsection \ref{proofaddendum}.


\section{Function spaces} \label{sec:functspaces}

In this section we fix notation for function spaces and present an extension lemma. 

\subsection{Function spaces}
Let $\Om \subset \R^n$ be open and bounded. We denote by $C^0(\Om)$ the space of continuous complex-valued functions on $\Om$.
For $k \in \N \cup \{\infty\}$ we set 
\begin{align*}
  C^k(\Om) &= \{f \in \C^\Om ; \p^\al f \in C^0(\Om), 0 \le |\al| \le k\},\\
  C^k(\overline \Om) &= \{f \in C^k(\Om) ; \p^\al f \text{ has a continuous extension to } \overline \Om, 0 \le |\al| \le k\},
\end{align*}
where $\overline \Om$ denotes the closure of $\Om$.

Note that $C^{k}(\overline \Om)$ is a Banach space when equipped with the norm
\[
\|f\|_{C^{k}(\overline \Om)} 
:= \sup_{\substack{|\al| \le k\\ x \in \Om}} |\p^\al f(x)|.
\]

For $k \in \N$ and $p \ge 1$ we consider the Sobolev space
\[
	W^{k,p}(\Om) = \{f \in L^p(\Om) ; \p^\al f \in L^p(\Om), 0 \le |\al| \le k\},
\]
where $\p^\al f$ denote distributional derivatives. On bounded intervals $I \subset \R$ the Sobolev space $W^{1,1}(I)$ 
coincides with the space $AC(I)$ of absolutely continuous functions on $I$
if we identify each $W^{1,1}$-function with its unique continuous representative. 
Recall that a function $f : \Om \to \R$ on an open subset $\Om \subset \R$ 
is absolutely continuous if for every $\ep>0$ there exists $\de>0$ so that  
$\sum_{i=1}^n |a_i -b_i| < \de$ implies $\sum_{i=1}^n |f(a_i) -f(b_i)| < \ep$ whenever $[a_i,b_i]$, $i =1,\ldots,n$, 
are non-overlapping intervals contained in $\Om$.
 
Let $\Om \subset \R^n$ be open and bounded, and let $1 \le p < \infty$. 
A measurable function $f : \Om \to \C$ belongs to the weak $L^p$-space $L_w^p(\Om)$ if 
\[
\|f\|_{p,w,\Om} := \sup_{r\ge 0} \Big\{r \cdot \cL^n(\{x \in \Om ; |f(x)| > r\})^{\frac{1}{p}}\Big\} < \infty,
\]
where $\cL^n$ denotes the $n$-dimensional Lebesgue measure. 
For $1 \le q < p < \infty$ we have (cf.\ \cite{Grafakos08} Example 1.1.11)
\begin{equation} \label{eq:qp}
  \|f\|_{q,w,\Om} \le \|f\|_{L^q(\Om)} \le \Big(\frac{p}{p-q}\Big)^{\frac{1}{q}} 
  \cL^n(\Om)^{\frac{1}{q}-\frac{1}{p}} \|f\|_{p,w,\Om}
\end{equation}
and hence
$L^p(\Om) \subset L_w^p(\Om) \subset L^q(\Om) \subset L_w^q(\Om)$
with strict inclusions. 
Note that $\|~\|_{p,w,\Om}$ is only a quasinorm; more precisely, for $f_j \in L_w^p(\Om)$ we have
\begin{equation} \label{triangle}
	\Big\|\sum_{j=1}^m f_j \Big\|_{p,w,\Om} \le m \sum_{j=1}^m \|f_j\|_{p,w,\Om}.	
\end{equation}
If $\Om_i$ is a finite or countable family of open sets whose union is $\Om$ then (cf.\ \cite{GhisiGobbino13} Lemma 3.1)
\begin{align} \label{subadditivity}
    &\| f \|_{p,w, \Om} \le \sum_i  \|f \|_{p,w, \Om_{i}} , \qquad \forall f\in L^p_w (\Om). 
\end{align}
If $p>1$ then there exists a norm equivalent to $\|~\|_{p,w,\Om}$ which makes $L_w^p(\Om)$ into a Banach space.

Analogously we may consider $L^p_w(K)$ for compact sets $K \subset \R^n$.  
We shall also use $W^{k,p}_{\on{loc}}$, $AC_{\on{loc}}$, as well as $L^p(\Om,\C^m) = [L^p(\Om)]^m$, $W^{k,p}(\Om,\C^m) = [W^{k,p}(\Om)]^m$, etc.,\ with the obvious meaning. 

\subsection{Extension lemma} 
The following lemma is a generalization of Lemma 3.2 of \cite{GhisiGobbino13} with essentially the same proof. 

\begin{lem} \label{extend}
  Let $\Om \subset \R$ be open and bounded, let $f : \Om \to \C$ be continuous, and set $\Om_0 := \{t \in \Om ; f(t) \ne 0\}$.  
  Assume that $f|_{\Om_0} \in AC_{\on{loc}}(\Om_0)$ and that $f|_{\Om_0}' \in L_w^p(\Om_0)$ for some $p>1$ 
  (note that $f$ is differentiable a.e.\ in $\Om_0$).
  Then the distributional derivative of $f$ in $\Om$ is a measurable function $f' \in L_w^p(\Om)$ and 
  \begin{equation} \label{eq:extend}
  \|f'\|_{p,w,\Om} = \|f|_{\Om_0}'\|_{p,w,\Om_0}.
  \end{equation}
\end{lem}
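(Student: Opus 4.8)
The plan is to reduce the statement to the behavior of $f$ near the "bad" set $\Om \setminus \Om_0 = \{f = 0\}$, and to show that $f$ being continuous and vanishing there prevents the distributional derivative from acquiring any singular part. First I would fix a test function $\vh \in C_c^\infty(\Om)$ and aim to prove
\[
	-\int_\Om f(t) \vh'(t)\, dt = \int_{\Om_0} f|_{\Om_0}'(t)\, \vh(t)\, dt,
\]
which simultaneously establishes that the distributional derivative of $f$ on $\Om$ is represented by the $L^1_{\on{loc}}$ function equal to $f|_{\Om_0}'$ on $\Om_0$ and to $0$ on $\Om \setminus \Om_0$ (note $f|_{\Om_0}' \in L^p_w(\Om_0) \subset L^1(\Om_0)$ since $\Om_0$ is bounded, so the right-hand side makes sense). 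Since $\Om_0$ is open in $\R$, it is a countable disjoint union of open intervals $\Om_0 = \bigsqcup_k (a_k,b_k)$; on each such interval $f$ is locally absolutely continuous, and at the endpoints $a_k, b_k$ (which lie in $\Om \setminus \Om_0$ or in $\p\Om$) we have $f(a_k) = f(b_k) = 0$ by continuity of $f$.

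The key step is to justify integration by parts on each interval $(a_k,b_k)$ with the boundary terms vanishing. On a compact subinterval $[a_k + \ep, b_k - \ep]$ absolute continuity gives $-\int f \vh' = \int f' \vh - [f\vh]_{a_k+\ep}^{b_k-\ep}$ with no difficulty; the issue is passing to the limit $\ep \to 0$. The boundary term $f(b_k-\ep)\vh(b_k-\ep) - f(a_k+\ep)\vh(a_k+\ep) \to 0$ because $f$ is continuous and vanishes at $a_k, b_k$ and $\vh$ is bounded. For the term $\int_{a_k+\ep}^{b_k-\ep} f'\vh$ we use that $f|_{\Om_0}' \in L^1(\Om_0)$, so dominated convergence applies interval by interval. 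Summing over $k$: $\sum_k \int_{(a_k,b_k)} |f'| |\vh| \le \|\vh\|_\infty \|f|_{\Om_0}'\|_{L^1(\Om_0)} < \infty$, so the series converges absolutely and we may interchange sum and integral, obtaining the displayed identity. This shows $f' \in L^1_{\on{loc}}(\Om)$ and that it agrees a.e.\ with $f|_{\Om_0}'$ extended by zero.

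Finally, once $f'$ is identified with the zero-extension of $f|_{\Om_0}'$, the norm identity \eqref{eq:extend} is immediate: for every $r \ge 0$,
\[
	\{t \in \Om ; |f'(t)| > r\} = \{t \in \Om_0 ; |f|_{\Om_0}'(t)| > r\}
\]
up to a null set (the complement $\Om\setminus\Om_0$ contributes nothing since $f' = 0$ there a.e.), so the defining suprema in the quasinorm coincide, giving $\|f'\|_{p,w,\Om} = \|f|_{\Om_0}'\|_{p,w,\Om_0}$; in particular $f' \in L^p_w(\Om)$. The main obstacle is the interchange of the (countable) sum over the components of $\Om_0$ with the integral and the $\ep\to 0$ limit; this is handled cleanly by the $L^1$ bound above, which is why one first observes that weak-$L^p$ on a bounded set embeds in $L^1$. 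This is precisely the one-dimensional argument of Lemma~3.2 of \cite{GhisiGobbino13}, adapted to complex-valued $f$ (which changes nothing, since one works with real and imaginary parts, or simply notes every estimate used is in terms of $|f|$).
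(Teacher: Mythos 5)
Your argument is correct and is essentially the paper's own proof: extend $f|_{\Om_0}'$ by zero, integrate by parts on each connected component of $\Om_0$ over compact subintervals, kill the boundary terms using continuity of $f$ and its vanishing on $\Om\setminus\Om_0$, pass to the limit via dominated convergence (using $L^p_w\subset L^1$ on bounded sets), and sum over components. The only nuance worth noting is that a component endpoint may lie on $\p\Om$, where $f$ need not vanish; there the boundary term dies instead because the test function has compact support in $\Om$.
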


\begin{proof} 
The function $\ps : \Om \to \C$ defined by  
\[
\ps(t) := 
\begin{cases}
  f'(t) & \text{ if } t \in \Om_0\\
  0     & \text{ if } t \in \Om \setminus \Om_0  
\end{cases}
\]
clearly belongs to $L_w^p(\Om)$. We show that $\ps$ is the distributional derivative of $f$ in $\Om$.
Let $\ph \in C^\infty_c(\Om)$ be a test function with compact support in $\Om$ and 
let $\cC$ denote the (at most countable) set of connected components of $\Om_0$. 
Then, using integration by parts for the Lebesgue integral (see e.g.\ \cite{Leoni09} Corollary 3.37) 
\begin{align*}
  \int_\Om f \ph' \, dt = \int_{\Om_0} f \ph'  \, dt = \sum_{J \in \cC} \int_{J} f \ph'  \, dt
  = -\sum_{J \in \cC} \int_{J} f' \ph  \, dt = -\int_{\Om_0} f' \ph  \, dt = -\int_{\Om} \ps \ph  \, dt.   
\end{align*}  
(If $J=(a,b)$ then $\int_a^b f \ph'  \, dt = \lim_{\ep \to 0^+} \int_{a+\ep}^{b-\ep} f \ph'  \, dt 
= - \lim_{\ep \to 0^+} \int_{a+\ep}^{b-\ep} f' \ph  \, dt
= - \int_a^b f' \ph  \, dt$, by the Dominated Convergence Theorem, continuity of $f$, and \eqref{eq:qp}.)
Moreover, we have $\|f'\|_{p,w,\Om} = \|\ps\|_{p,w,\Om} = \|\ps\|_{p,w,\Om_0} = \|f|_{\Om_0}'\|_{p,w,\Om_0}$.
\end{proof}


\bigskip

\section{Absolute continuity of  roots} \label{absolutecontinuity} 

\subsection{Optimal regularity of radicals of differentiable functions}

We need the following variant of Theorem 2.2 of \cite{GhisiGobbino13}.

\begin{prop} \label{prop:radicals}
  Let $I \subset \R$ be a bounded interval and let $k \in \N_{>0}$.
  For each $f \in C^{k}(\overline I,\C)$ we have 
  \begin{equation} \label{est1}
    |f'(t)| \le \La_{k}(t) |f(t)|^{1-\frac{1}{k}} \quad \text{ a.e.\ in } I
  \end{equation}  
  for some $\La_{k}=\La_{k,f} \in L_w^p(I,\R_{\ge0})$, where $\frac{1}{p} + \frac{1}{k} = 1$, and such that
  \begin{equation} \label{est2}
    \|\La_{k}\|_{p,w,I} \le C(k) \max\big\{  \|f^{(k)}\|_{L^\infty(I)}^{\frac{1}{k}}  |I|^{\frac{1}{p}}, 
    \|f'\|_{L^\infty(I)}^{\frac{1}{k}}\big\}.
  \end{equation}
\end{prop}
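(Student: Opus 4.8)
The plan is to reduce the estimates \eqref{est1}--\eqref{est2} to the optimal regularity of radicals of \emph{real-valued} differentiable functions, i.e.\ to Theorem~2.2 of \cite{GhisiGobbino13} quoted in the introduction. First dispose of the trivial case $k=1$: then $p=\infty$, $|f(t)|^{1-1/k}=1$, and one simply takes $\La_1 := |f'| \in L^\infty(I)$. So assume $k\ge 2$, whence $p=k/(k-1)\in(1,2]$. Write $f=u+iw$ with $u:=\Re f$ and $w:=\Im f$ in $C^{k}(\overline I,\R)$, and for a real-valued $g\in C^{k}(\overline I,\R)$ set $v_g := \operatorname{sgn}(g)\,|g|^{1/k}$, which is a continuous real-valued function on $I$ (the jump of $\operatorname{sgn}(g)$ is killed by $|g|^{1/k}\to 0$ at zeros of $g$).

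The key input is Ghisi--Gobbino's theorem applied with $v_g$ in the role of their radical and $g$ in the role of their $C^{k,\al}$-function, taking their degree equal to $k-1$ and their $\al=1$: indeed $g\in C^{k-1,1}(\overline I)$ with $\Hoeld_{1,I}(g^{(k-1)})\le \|g^{(k)}\|_{L^\infty(I)}$ and $|v_g|^{(k-1)+1}=|g|$, so their theorem yields $v_g'\in L^p_w(I)$, $\tfrac1p+\tfrac1k=1$, with
\[
  \|v_g'\|_{p,w,I} \le C(k)\,\max\big\{\|g^{(k)}\|_{L^\infty(I)}^{1/k}\,|I|^{1/p},\ \|g'\|_{L^\infty(I)}^{1/k}\big\}.
\]
(The passage across the zero set of $g$ is contained in that theorem; alternatively one applies it on $\{g\ne 0\}$ and extends by Lemma~\ref{extend} applied to $v_g$.) A direct computation gives $v_g'=\tfrac1k|g|^{1/k-1}g'$ on $\{g\ne 0\}$, hence $|g'|=k\,|v_g'|\,|g|^{1-1/k}$ there; on $\{g=0\}$ one has $g'=0$ a.e.\ and, since $k\ge2$, also $|g|^{1-1/k}=0$. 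Thus, setting $\La^{(g)}:=k\,|v_g'|$ (with value $0$ on $\{g=0\}$), one gets $|g'(t)|\le \La^{(g)}(t)\,|g(t)|^{1-1/k}$ a.e.\ in $I$ with $\La^{(g)}\in L^p_w(I,\R_{\ge0})$ and $\|\La^{(g)}\|_{p,w,I}$ bounded by the right-hand side above. Apply this to $g=u$ and to $g=w$.

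Finally, combine. Since $|u|\le|f|$, $|w|\le|f|$, $1-1/k\ge 0$, and $|f'|\le|u'|+|w'|$,
\[
  |f'(t)| \le |u'(t)|+|w'(t)| \le \big(\La^{(u)}(t)+\La^{(w)}(t)\big)\,|f(t)|^{1-1/k}\qquad\text{a.e.\ in }I,
\]
so \eqref{est1} holds with $\La_k:=\La^{(u)}+\La^{(w)}$. By \eqref{triangle}, $\La_k\in L^p_w(I,\R_{\ge0})$ and $\|\La_k\|_{p,w,I}\le 2\big(\|\La^{(u)}\|_{p,w,I}+\|\La^{(w)}\|_{p,w,I}\big)$; together with $\|u^{(k)}\|_{L^\infty(I)},\|w^{(k)}\|_{L^\infty(I)}\le\|f^{(k)}\|_{L^\infty(I)}$ and $\|u'\|_{L^\infty(I)},\|w'\|_{L^\infty(I)}\le\|f'\|_{L^\infty(I)}$, this gives \eqref{est2} after absorbing the numerical factors into $C(k)$. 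I expect no serious obstacle here: all the analytic weight sits inside the cited theorem of Ghisi--Gobbino, and the only genuinely new ingredients are the elementary reduction from the complex-valued $f$ to its real and imaginary parts — where it is essential that $|u|^{1-1/k},|w|^{1-1/k}\le|f|^{1-1/k}$ — and the handling of the zero sets of $u$ and $w$ (covered by Lemma~\ref{extend}); the remaining care is just bookkeeping of exponents and constants.
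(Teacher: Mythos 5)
Your proof is correct and follows essentially the same route as the paper's: reduce to the real and imaginary parts of $f$, apply Theorem~2.2 of Ghisi--Gobbino to a $k$-th radical of each, recover the pointwise bound \eqref{est1} via the chain rule off the zero set, and observe that the derivative vanishes a.e.\ on the zero set. The only cosmetic differences are your use of the signed radical $\operatorname{sgn}(g)\,|g|^{1/k}$ where the paper uses $|g|^{1/k}$, and your explicit (harmless) separate treatment of $k=1$.
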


\begin{proof}
  If the real and imaginary part of $f$ satisfy \eqref{est1}, then so does $f$. Hence it suffices to 
  consider the case that $f$ is real-valued.
  
Set $h = |f|^{\frac{1}{k }}$.   
  Then Theorem 2.2 of \cite{GhisiGobbino13} implies that $h' \in L_w^p(I,\R)$ and 
  \begin{equation*}
    \|h'\|_{p,w,I} \le C(k) \max\big\{  \|f^{(k)}\|_{L^\infty(I)}^{\frac{1}{k}}  |I|^{\frac{1}{p}}, 
    \|f'\|_{L^\infty(I)}^{\frac{1}{k}}\big\}.
  \end{equation*}
  In particular, $h \in W^{1,q}(I,\R)$ for each $q \in [1,p)$. 
  By differentiating $h^{k} = |f|$ we find 
  \[
  |f'(t)| = k  |h'(t)||f(t)|^{1-\frac{1}{k}}\quad  \text{ for all } t\in I \text{ with } f(t)\ne0.
  \]  
  The derivative $f'$ vanishes at the accumulation points of $f^{-1}(0)$, and 
  the isolated points of $f^{-1}(0)$ form an at most countable set. 
  So we conclude that 
  \eqref{est1} holds with $\La_{k} := k |h'|$.
\end{proof}

\begin{rem}
  Proposition~\ref{prop:radicals} is optimal in the following sense.   $\La_{k}$ cannot, in general, be chosen in $L^p$. Indeed, for $f : (-1,1) \to \R$, $f(t)=t$, we have 
    \[
    \Big(\frac{|f'|}{|f|^{1-\frac{1}{k}}}\Big)^p = \big(|t|^{\frac{1}{k}-1}\big)^p = |t|^{-1},
    \] 
    which is not integrable near $0$. See \cite{GhisiGobbino13} Example 4.3.
\end{rem}

\begin{rem}
In Proposition \ref{prop:radicals} it is actually enough to require that 
$f \in C^{k-1,1}(\overline I,\C)$; cf.\ \cite{GhisiGobbino13} Theorem 2.2.
\end{rem}


\subsection{Set-valued functions and curves of polynomials}
In the following we shall be dealing with multi-valued functions arising from complex radicals, their composition with single-valued 
functions, and their addition and multiplication. 

The (usual) composition $G \o F : X \leadsto Z$ of two set-valued functions $F : X \leadsto Y$ and $G : Y \leadsto Z$ is given by 
$(G \o F)(x) = \cup_{y \in F(x)} G(y)$. 
The addition $F+G$ and multiplication $F G$ of $F : X \leadsto \C$ and $G : X \leadsto \C$ are then well-defined.

A \emph{selection} of a set-valued function $F : X \leadsto Y$ is a single-valued function $f : X \to Y$ satisfying 
$f(x) \in F(x)$ for all $x \in X$. 
A \emph{parameterization} of a set-valued function $F : X \leadsto Y$ is a pair $(f,Z)$, where
$f : X \times Z \to Y$ is a single-valued function so that $F(x) = \{f(x,z) ; z \in Z\}$
for every $x \in X$. 
We shall only be concerned with multi-valued functions $F$ so that the 
cardinality $|F(x)|$ is finite and bounded, i.e., $\max_{x\in X} |F(x)| =:N < \infty$.
Then a parameterization of $F$ is an $N$-tuple of single-valued functions
(with multiplicities at points $x$ where $|F(x)|<N$).

If the coefficients of the polynomial $P_a$ in \eqref{polynomial} are complex-valued continuous functions $a_j \in C^0(I)$ defined in an 
interval $I \subset \R$, we say that $P_a(t) = P_{a(t)}$, $t \in I$, is a curve of polynomials. 
The roots of a curve of polynomials form a multi-valued function $\la : I \leadsto \C$ which admits a continuous parameterization; 
see \cite{Kato76} Chapter II Theorem~5.2.  
(This is no longer true if the parameter space is higher-dimensional due to 
monodromy.)  
Moreover, any continuous selection of $\la : I \leadsto \C$ can be completed to a continuous parameterization 
$\lambda_1, \ldots, \lambda_n$ such that $P_{a(t)} (Z) = \prod_i (Z-\lambda_i(t))$; 
see \cite{RainerN} Lemma 6.17.

\begin{lem} \label{estimate}
Let $\al \in \Q_{>0}$, $q, s \in \N_{>0}$, and suppose that 
$ k \ge \lceil \max \big\{\frac {s} {\al},{q}\big\} \rceil$,  $p = \frac{k}{k-1}$.  
Let $I \subset \R$ be a bounded interval and let $\mathcal U \subset \C^n$ be open and bounded.
Let $\ps \in C^1(\overline {\mathcal U})$,
$h, P_j \in C^{k}(\overline I)$, and let $\Om \subset I$ be an open subset of $I$  
so that $y^{1/q}(\Om) \subset \mathcal U$, where we put $y=(y_1,\ldots,y_n)=(P_1/h^s,\ldots,P_n/h^s)$ and 
$y^{1/q}=(y_1^{1/q},\ldots,y_n^{1/q})$.
Consider the multi-valued function
\[
\vh =h^\al \ps\Big(y^{\frac{1}{q}}\Big) = 
h^\al \ps\Big(y_1^{\frac{1}{q}},\ldots,y_n^{\frac{1}{q}}\Big).
\]
Then $\vh$ admits a continuous parameterization on $\Om$ and  for any such parameterization $\ph$  the distributional derivative of $\ph$ in $\Om$ is a measurable function $\ph' \in L^p_w(\Om)$ and 
\begin{align} \label{eq:estimate}
    &\|\ph'\|_{p,w,\Om} \le C_1(\al,s,q,\mathcal U) \|\ps\|_{C^1(\overline {\mathcal U})} N_{\alpha,k,s,I} (h) 
     \max_{j} \big\{H_{k,I}(h),H_{k,I}(P_j)
   \big\}
\end{align}
for a positive constant $C_1(\al,s,q, \mathcal U)$,  
where for any function $g \in C^{k}(\overline I)$ we set 
\[
N_{\alpha,k,s,I}(g) :=   \max \big\{   \|g\|_{L^\infty(I)}^{\al-\frac s k},    \|g\|_{L^\infty(I)}^{\al-\frac 1 k} \big\} , 
  \quad 	H_{k,I}(g) :=  \max\big\{  \|g^{(k)}\|_{L^\infty(I)}^{\frac{1}{k}}  |I|^{\frac{1}{p}}, 
    \|g'\|_{L^\infty(I)}^{\frac{1}{k}}\big\}. 
\] 
\end{lem}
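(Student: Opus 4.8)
The plan is to reduce everything to Proposition \ref{prop:radicals} applied to the individual building blocks $h$ and the $P_j$. First I would address the existence of a continuous parameterization of $\vh$ on $\Om$. Since $\ps \in C^1(\overline{\mathcal U})$ is single-valued and $y=(P_1/h^s,\ldots,P_n/h^s)$ is continuous on $\Om$ (note $h$ may vanish somewhere in $I$, but on $\Om$ we have $y(\Om)\subset\mathcal U$, hence in particular $h\ne 0$ on $\Om$ after intersecting with $\{h\ne 0\}$ — or rather the hypothesis $y(\Om)\subset\mathcal U$ forces $h$ not to vanish on $\Om$), the only multi-valuedness comes from the radicals $y_j^{1/q}$ and $h^\al$. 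On each connected component of $\Om$ one can choose continuous branches of these radicals (a root of a nonvanishing continuous complex function on an interval, times a root of $h$ where one must be slightly careful at zeros of $h$ inside $\Om$ — but $h$ has no zeros on $\Om$), and any two parameterizations differ by the finite group action coming from the roots of unity; so it suffices to establish the estimate \eqref{eq:estimate} for one parameterization $\ph$, the bound for any other being identical.

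Next, for the heart of the estimate, I would work on a single connected component $J\subset\Om$ and write $\ph = h^\al \cdot \ps(y_1^{1/q},\ldots,y_n^{1/q})$ there with fixed continuous branches. The key structural point is that the function $g^{1/m}$ for $g\in C^k(\overline I)$ and $m\le k$ inherits, via Proposition \ref{prop:radicals} (applied with $k$ replaced by $m$, noting $p=k/(k-1)\le m/(m-1)$ so the weak-$L^p$ bound follows from the weak-$L^{m/(m-1)}$ bound together with \eqref{eq:qp} and boundedness of $I$), the pointwise bound $|(g^{1/m})'| \le \text{const}\cdot\Lambda(t)\,|g|^{1/m-1}$ with $\Lambda\in L^p_w$ controlled by $H_{k,I}(g)$. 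Differentiating $\ph$ by the product and chain rules on $J$ produces a sum of terms, each of which is a product of: powers of $h$ and the $y_j$ (hence bounded by $N_{\al,k,s,I}(h)$ and constants depending on $\mathcal U$ since $\overline{\mathcal U}$ is bounded and away from the branch locus in the relevant sense), a derivative of $\ps$ (bounded by $\|\ps\|_{C^1(\overline{\mathcal U})}$), and exactly one factor that is a derivative of a radical of $h$ or of some $y_j = P_j/h^s$. For that last factor one uses the quotient rule $y_j' = (P_j' h^s - s P_j h^{s-1}h')/h^{2s}$ together with Proposition \ref{prop:radicals} applied to $h$ and to $P_j$; collecting the $L^\infty$ bounds into $N_{\al,k,s,I}$ and the weak-$L^p$ bounds into $\max_j\{H_{k,I}(h),H_{k,I}(P_j)\}$, and using \eqref{triangle} to handle the finite sum of terms (absorbing the factor from the quasi-triangle inequality into $C_1$), yields \eqref{eq:estimate} on $J$. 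Finally I would invoke \eqref{subadditivity} to pass from the components $J$ to $\Om$: since the components are disjoint, $\|\ph'\|_{p,w,\Om}\le\sum_J\|\ph'\|_{p,w,J}$, and summing the per-component bounds (each over an interval of length $\le|I|$) preserves the stated form — here one must be mildly careful that the constant does not blow up with the number of components, which is fine because $H_{k,I}$ and $N_{\al,k,s,I}$ are defined on all of $I$ and dominate their restrictions to each $J$, and the sum of the $|J|^{1/p}$ over disjoint $J\subset I$ is controlled by $|I|^{1/p}$ only after using that $\ell^{1/p}$-type sums of lengths are bounded — so one actually uses the global versions throughout and Lemma \ref{extend} to patch across the set $\{h=0\}\cap\overline\Om$ if needed.

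The main obstacle I anticipate is the bookkeeping in the chain/product rule expansion: $\ps(y^{1/q})$ is a composition of a $C^1$ function with an $n$-tuple of multivalued radicals of quotients, so $\frac{d}{dt}\ps(y_1^{1/q},\ldots,y_n^{1/q}) = \sum_{j=1}^n (\p_j\ps)(y^{1/q})\cdot\frac{1}{q}y_j^{1/q-1}y_j'$, and each $y_j'$ itself expands via the quotient rule into terms involving $h'$ and $P_j'$, each of which must be rewritten using the radical estimate \eqref{est1} to extract an $L^p_w$ factor while keeping the remaining powers of $h, P_j$ bounded in $L^\infty$ on $\Om$. Getting the exponents to match $N_{\al,k,s,I}$ exactly — in particular verifying that the powers of $h$ that appear after cancellation are $\al - s/k$ and $\al - 1/k$ (the two cases corresponding to whether the "bad" derivative landed on a $P_j$-factor, giving $h^{\al-s/k}$ up to harmless positive powers of $y_j$, or on $h$ itself, giving $h^{\al-1/k}$) — is where the constraint $k\ge\lceil\max\{s/\al,q\}\rceil$ is used, ensuring these exponents stay in a range where Proposition \ref{prop:radicals} applies and the powers of $|h|$ on the right are nonnegative so they can be bounded by $\|h\|_{L^\infty}$-powers. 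Everything else is routine once this expansion is organized cleanly.
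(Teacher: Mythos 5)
Your overall route is the paper's: expand $\ph'$ by the chain, product and quotient rules, feed each radical derivative into Proposition~\ref{prop:radicals}, check that $k\ge\lceil\max\{s/\al,q\}\rceil$ keeps the leftover powers of $h$ (namely $\al-\frac{s}{k}$ and $\al-\frac{1}{k}$) and of the $y_j$ (namely $\frac1q-\frac1k$) nonnegative so they are absorbed into $N_{\al,k,s,I}(h)$ and into constants depending on the bounded set $\mathcal U$, and then pass to the distributional statement via Lemma~\ref{extend}. Two points, however, are not right as written.

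First, the hypothesis $y(\Om)\subset\mathcal U$ does \emph{not} force $h$ to be zero-free on $\Om$: take $h(t)=t$ and $P_j(t)=t^{2s}$, so that $y_j$ extends continuously by $t^s$ across $t=0$ while $h(0)=0$. (This is exactly the situation in the application, where the $y_i=P_i/h^s$ are regular coordinates on a blow-up chart but $h$ may vanish along the exceptional divisor.) The paper's proof is organized around this: it introduces $\Om_0=\{t\in\Om;\,h(t)\ne0\}$, proves everything there, and then invokes Lemma~\ref{extend} --- applicable because $\al>0$ and $\ps(y^{1/q})$ is bounded, so $\ph$ vanishes wherever $h$ does --- to recover the distributional derivative on all of $\Om$. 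So your ``if needed'' at the end should be ``always''. Relatedly, your existence argument for the continuous parameterization chooses branches of radicals of \emph{nonvanishing} functions; since $y_j$ may vanish inside $\Om_0$ (at zeros of $P_j$), you need the general fact that the roots of $Z^q-y_j(t)$ admit a continuous parameterization on an interval, together with the observation that $h^\al$ extends continuously by $0$ across the zeros of $h$.

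Second, and more substantively, passing from the a.e.\ pointwise bound on $\ph'$ to ``$\ph'$ is the distributional derivative'' requires knowing that $\ph$ is locally absolutely continuous on $\Om_0$; a.e.\ differentiability with an $L^p_w$ bound on the pointwise derivative is not enough (one could add a Cantor-type function), and $AC_{\on{loc}}$ is an explicit hypothesis of Lemma~\ref{extend}. Because $\ps$ is only $C^1$ and the inner functions $y_j^{1/q}$ are only $AC_{\on{loc}}$, this is not the classical chain rule: the paper uses Lemma~2.1 of \cite{MarcusMizel72} both to get $AC_{\on{loc}}$ of the composition $\ps(y^{1/q})$ and to justify the chain rule almost everywhere, with the convention that $|P_j'|/|P_j|^{1-1/k}:=0$ at accumulation points of $P_j^{-1}(0)$. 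Once you add these two repairs, your argument coincides with the paper's proof.
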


\begin{proof}
First we show that $\vh$ admits a continuous parameterization on $\Om$. 
Consider the open subsets $\Om_1 \subset \Om_0 \subset \Om \subset I$ given by 
\[
  \Om_0 = \{t \in \Om ; h (t) \ne 0\} \quad \text{ and } \quad \Om_1 = \{t \in \Om_0 ; \forall j ~P_j \ne 0\}.
\]
Each multi-valued function $y_j^{1/q}=(P_j/h^s)^{1/q}$ has a continuous parameterization on $\Om_0$ 
and thus so does $\ps(y^{1/q})$. The multi-valued function $h^\al$ has a continuous parameterization on $I$, 
which vanishes on the zero set of $h$. Since $\ps(y^{1/q})$ is bounded on $\Om$, we may conclude that $\vh$ 
admits a continuous parameterization on $\Om$.  

Let $\ph$ be any continuous parameterization of $\vh$ on $\Om$. Abusing notation we denote by $\ph$ also any 
single component in the parameterization $\ph$. 
Then $\ph$ is $C^1$ on $\Om_1$ and its 
derivative satisfies
\begin{align} \label{eq:derph}
\begin{split}
	|\ph'| &\le \al   \Big|\ps\Big(y^{\frac{1}{q}}\Big) \Big|  \frac{|h'|}{|h|^{1-\al}}
	 + 
	\frac{1}{q} \sum_{j=1}^n \Big|\p_j \ps\Big(y^{\frac{1}{q}}\Big)\Big| 
	\bigg(\frac{|P_j'|}{|P_j|^{1-\frac{1}{q}}}  |h|^{\al-\frac{s}{q}} +s\Big|\frac{P_j}{h^s}\Big|^{\frac{1}{q}} 
	\frac{|h'|}{|h|^{1-\al}} \bigg)\\
	& \le \al \Big|\ps\Big(y^{\frac{1}{q}}\Big) \Big|   \frac{|h'|}{|h|^{1-\frac 1 k}}  |h|^{ \al -\frac{1}{k}} 
	  \\
	 &\quad +
	\frac{1}{q} \sum_{j=1}^n \Big|\p_j \ps\Big(y^{\frac{1}{q}}\Big)\Big| 
	\bigg(\frac{|P_j'|}{|P_j|^{1-\frac{1}{k}}}  \Big|\frac{P_j}{h^s}\Big|^{\frac{1}{q} - \frac 1 k} 
	  |h^s|^{\frac \al s-\frac{1}{k}} + s\Big|\frac{P_j}{h^s}\Big|^{\frac{1}{q}} 
	\frac{|h'|}{|h|^{1-\frac 1 k}} |h|^{ \al -\frac{1}{k}}  \bigg)
	\end{split}	
\end{align}

Next we claim that $\ph$ is locally absolutely continuous on $\Om_0$. 
Indeed, 
every continuous parameterization of $h^\al$, respectively $P_j^{1/q}$, is $AC$ on $I$ by Proposition~\ref{prop:radicals}, 
and consequently every continuous parameterization of $y_j^{1/q} = (P_j/h^s)^{1/q}$ is $AC_{\on{loc}}$ on $\Om_0$; 
note that on each compact subinterval of $\Om_0$ any continuous parameterization of $1/h^{s/q}$ is $C^1$. 
Since $\ps$ is $C^1$, we may infer from Lemma 2.1 of \cite{MarcusMizel72}
that each continuous parameterization of $\ps(y^{1/q})$, and thus of $h^\al \ps(y^{1/q})$, is   
locally absolutely continuous on $\Om_0$. This shows the claim.

In particular, $\ph$ is differentiable almost everywhere in $\Om_0$. 
We argue that \eqref{eq:derph} holds almost everywhere in $\Om_0$, if we define 
\[
	\frac{|P_j'|}{|P_j|^{1-\frac{1}{k}}} := 0 \quad \text{ on accumulation points of $P_j^{-1}(0)$.}
\]
Indeed, by Lemma 2.1 of \cite{MarcusMizel72} the chain rule holds almost everywhere 
 and the derivative of any continuous, and hence absolutely continuous, parameterization of $P_j^{1/q}$ 
exists almost everywhere and vanishes on accumulation points of $P_j^{-1}(0)$.
The isolated points of $P_j^{-1}(0)$ form an at most countable set.  

Applying Proposition~\ref{prop:radicals} we may conclude that
\begin{equation} \label{eq:derph2}
	|\ph'| \le \La_{k} \Ps \quad \text{ a.e. in } \Om_0 \text{ for some } \Ps \in L^\infty(\Om,\R)
	\text{ and } \La_k \in L_w^p(I,\R).
\end{equation}
Here we set $\La_k = \max \{\La_{{k},h},\La_{{k},P_j}\}$.

Extending $\Ps$ by $1$ on $I \setminus \Om$ and using $L^p_w \cdot L^\infty \subset L^p_w$ we obtain
\begin{align*} 
    |\ph'| \le \tilde \La_{k} \quad \text{ a.e. in } \Om_0 \text{ for some } \tilde \La_k \in L_w^p(I,\R).      
\end{align*}
Using Lemma~\ref{extend} we may conclude that
the distributional derivative of $\ph$ in $\Om$ is a measurable function $\ph' \in L^p_w(\Om)$.  

The estimate \eqref{eq:estimate} follows from \eqref{eq:derph2}, \eqref{est2}, and \eqref{eq:extend}.  
\end{proof}


\subsection{Main Theorem}

\begin{thm} \label{theorem}
For every $n\in \N_{>0}$ there is $k=k(n) \in \N_{>0}$ and  $p=p(n)>1$ such that the following holds.  
	Let $I\subset \R$ be a compact interval and let 
	\[
		P_{a(t)}(Z) = Z^n + \sum_{j=1}^n a_j(t) Z^{n-j} \in C^{k}( I)[Z] 	
	\] 
	be a monic polynomial with coefficients $a_j \in C^{k}( I)$, $j = 1,\ldots,n$.   
	\begin{enumerate}
		\item 
		Let $\la_j \in C^0(I)$, $j = 1,\ldots,n$, be a continuous parameterization of the roots of $P_a$ on $I$.
		Then the distributional derivative of each $\la_j$ in $I$ is a measurable function $\la_j' \in L^q(I)$ for every 
		$q \in [1,p)$. 
		In particular, each $\la_j \in W^{1,q}(I)$ for every $q \in[1,p)$.
		\item This regularity of the roots is uniform.  Let $\{P_{a_\nu} ; \nu \in \cN\}$,
		\[
			 P_{a_\nu(t)}(Z) = Z^n + \sum_{j=1}^n a_{\nu,j}(t) Z^{n-j} \in C^{k}( I)[Z], ~\nu \in \cN, 	
		\] 
		be a family of curves of polynomials, indexed by $\nu$ in some set $\cN$, so that the set of coefficients 
		$\{a_{\nu,j} ; \nu \in \cN, j=1,\ldots,n\}$ is bounded in $C^{k}( I)$. 
		Then the set
		\[
			\qquad \{\la_{\nu} \in C^0(I);  P_{a_\nu}(\la_\nu)=0
			\text{ on $I$}, ~\nu \in \cN\}
		\]			
		is bounded in $W^{1,q}(I)$ for every $q \in [1,p)$.	
	\end{enumerate}
\end{thm}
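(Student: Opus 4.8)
The plan is to prove part~(1) for a single continuous root $\la$ of $P_a$ and then to deduce part~(2) by tracking constants; this suffices, since any continuous selection of the roots extends to a continuous parameterization \cite{RainerN}. First I would normalize: a Tschirnhausen transformation replaces each coefficient by a fixed weighted homogeneous polynomial in the original ones (so a $C^k$-bounded family stays $C^k$-bounded) and shifts every root by $a_1/n\in C^k(I)$, which does not affect the claimed regularity; hence I may assume $P_a$ is in Tschirnhausen form \eqref{Tschirnhausen}, so that on $I\setminus\Om_2$ all roots vanish and therefore $\Om_0:=\{t\in I;\ \la(t)\neq0\}\subseteq\Om_2$. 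Next I fix once and for all a tower \eqref{tower} of smooth principalizations of $\sD_2,\dots,\sD_n$ and the associated data $\psi_\kk,q_\kk,\al_\kk$ of Theorem~\ref{roots}. Since the tower consists of finitely many blow-ups, the integers $q_\kk,s_\kk$ and the exponents $\al_\kk$ occurring in \eqref{varphik} range over a finite set bounded in terms of $n$, so I may set $k=k(n):=\max_\kk\lceil\max\{s_\kk/\al_\kk,\,q_\kk\}\rceil$ and $p=p(n):=k/(k-1)>1$, enlarging $k(n)$ if needed so that $k(n)\ge k(n-1)$ (hence $p(n)\le p(n-1)$) for the induction below.

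The estimate is reduced to $\Om_0$ by Lemma~\ref{extend}: applied to $f=\la$ on $\Om=I$ it shows it is enough to prove $\la|_{\Om_0}\in AC_{\on{loc}}(\Om_0)$ and $\la|_{\Om_0}'\in L^p_w(\Om_0)$ with a bound depending only on $n$, $|I|$, and $\max_j\|a_j\|_{C^k(I)}$; then $\la'\in L^p_w(I)$ with $\|\la'\|_{p,w,I}=\|\la|_{\Om_0}'\|_{p,w,\Om_0}$, and \eqref{eq:qp} yields $\la\in W^{1,q}(I)$ for all $q\in[1,p)$. To control $\la'$ over the part of $\Om_0$ inside $\Om_n$ I would use the resolution formulas. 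The curve lifts to $\lifta\colon\Om_n\to M_n$, and since $\si_n$ is a composition of blow-ups, hence proper, the compact set $\si_n^{-1}(a(I))$ is covered by finitely many chart neighborhoods $\mathcal U_n^{(1)},\dots,\mathcal U_n^{(L)}$ belonging to extended chains, so $\Om_n=\bigcup_{l\le L}\lifta^{-1}(\mathcal U_n^{(l)})$. On each $\lifta^{-1}(\mathcal U_n^{(l)})$ the multivalued functions $\varphi_\kk$ attached to the $l$-th extended chain admit continuous parameterizations (first part of Lemma~\ref{estimate}), and on each connected component Lemma~\ref{addendum} identifies $\la$ with $\sum_\kk A_\kk\varphi_\kk$ for suitable branches; hence a.e.\ on $\lifta^{-1}(\mathcal U_n^{(l)})$ one has $|\la'|\le\sum_\kk|A_\kk|$ times a sum of absolute values of derivatives of branches of the $\varphi_\kk$, each lying in $L^p_w$ with norm controlled by \eqref{eq:estimate}---uniformly in $l$ and the component, because the bound in \eqref{eq:estimate} involves only $C^k$-norms over $I$. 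The quasi-triangle inequality \eqref{triangle} and subadditivity \eqref{subadditivity} over the finite cover then give $\la|_{\Om_n}\in AC_{\on{loc}}(\Om_n)$ and $\la|_{\Om_n}'\in L^p_w(\Om_n)$ with the required bound.

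The remaining part $\Om_0\setminus\Om_n$ I would handle by induction on $n$ (the case $n=1$ being trivial, $\la=-a_1$). At every $t_0\in\Om_0$ the polynomial $P_{a(t_0)}$ has at least two distinct roots---in Tschirnhausen form all roots coincide only when they all vanish, which is excluded on $\Om_0$---so on a neighborhood $V_{t_0}$ the clusters of roots at $t_0$ stay separated and $P_a=Q_1\cdots Q_r$ with the $Q_i$ monic, each of degree $<n$, and with $C^k$ coefficients whose norms are controlled by $\max_j\|a_j\|_{C^k}$ (the smooth splitting of Section~\ref{splitting}). Applying the inductive hypothesis to the $Q_i$ bounds $\la'$ on $V_{t_0}$ in $L^{p(n)}_w$ (using $p(n)\le p(\deg Q_i)$); covering $\Om_0$ by finitely many such $V_{t_0}$ together with the $\lifta^{-1}(\mathcal U_n^{(l)})$ and applying \eqref{triangle}, \eqref{subadditivity} once more finishes the estimate on $\Om_0$, hence part~(1). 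Part~(2) then follows by inspection: all constants above depend only on $n$, $|I|$, and $\max_j\|a_j\|_{C^k}$, and for a $C^k$-bounded family $\{a_\nu\}$ the curves $a_\nu(I)$ all lie in a fixed compact $K\subset\C^n$, so $\si_n^{-1}(K)$ is a fixed compact set and the number $L$ of charts (and of splitting neighborhoods) may be chosen independently of $\nu$; hence $\{\la_\nu\}$ is bounded in $W^{1,q}(I)$ for every $q\in[1,p)$.

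The step I expect to be the main obstacle is the passage across the discriminant locus $I\setminus\Om_n=a^{-1}(\Sigma_n)$, which may carry positive Lebesgue measure while $\Om_0$ is not compact. One has to interlock three mechanisms---the resolution formulas (available only over $\Om_n$), the smooth splitting at points with at least two distinct roots (which lowers the degree and runs the induction), and Lemma~\ref{extend}, which silently absorbs the whole set $\{\la=0\}$ and in particular all points where $P_a$ has a single root---in such a way that $\Om_0$ is exhausted by \emph{finitely} many pieces, so that the subadditivity \eqref{subadditivity} does not destroy the bound; arranging this finiteness, and verifying that it remains uniform over a $C^k$-bounded family, is the delicate bookkeeping of the argument.
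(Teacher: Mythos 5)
Your treatment of the part of the curve lying over the complement of the discriminant locus is essentially the paper's: lift to $M_n$, cover the compact preimage by finitely many chart neighborhoods, and combine Lemma~\ref{addendum} with Lemma~\ref{estimate}, \eqref{triangle} and \eqref{subadditivity}. The genuine gap is in your handling of $\Om_0\setminus\Om_n$, i.e.\ the points where $\la\ne0$ but $P_a$ has a repeated root. Your plan requires covering this set by \emph{finitely} many splitting neighborhoods $V_{t_0}$, but no such finite cover exists in general: the neighborhoods on which Lemma~\ref{split} applies degenerate as one approaches points where all roots coincide, and $\Om_0\setminus\Om_n$ can accumulate at such points. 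For instance, for $P_{a(t)}(Z)=Z^2(Z-f(t))$ with $f\in C^k$ vanishing on an infinite set, one has $\Om_n=\emptyset$ and $\Om_0=\{f\ne0\}$ with infinitely many components clustering at zeros of $f$. Passing to a countable cover does not save the argument, because the bound produced by the inductive hypothesis (ultimately of the shape $H_{k,V}(g)=\max\{\|g^{(k)}\|_{L^\infty}^{1/k}|V|^{1/p},\|g'\|_{L^\infty}^{1/k}\}$) does not tend to $0$ as $|V|\to0$, so the right-hand side of \eqref{subadditivity} diverges. You correctly identify this as the main obstacle, but the proposal contains no mechanism to overcome it, and a local splitting induction cannot by itself.

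The paper's missing idea is to sidestep this set entirely by an approximation argument: one first proves the a~priori estimate \eqref{eq:estimate2} for \emph{real analytic} (in fact polynomial) curves whose discriminant is not identically zero, for which $a^{-1}(\Sigma_n)=I\setminus\Om_n$ is a \emph{finite} set, so that only the chart cover of $\Om_n$ is needed and the boundary terms cancel. The general $C^k$ case then follows by Weierstrass approximation $a_\nu\to a$ in $C^k$ (perturbed so that no discriminant vanishes identically), equi-H\"olderness and Arzel\`a--Ascoli to extract a $C^0$-limit parameterization, weak compactness of the derivatives in the reflexive space $L^q(I)$, and finally Lemma~\ref{comparison} to transfer the $W^{1,q}$-bound to the \emph{given} continuous parameterization. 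Two further points you should repair along these lines: the claim that the data $(s_\kk,q_\kk,\al_\kk)$ of Theorem~\ref{roots} range over a finite set ``because the tower consists of finitely many blow-ups'' is unjustified, since the local data vary from point to point of $M_n$; the paper obtains finiteness by covering only $\si_n^{-1}(\overline{\mathcal B})$ for a fixed bounded $\mathcal B\ni0$ and then reduces an arbitrary curve into $\mathcal B$ by the weighted dilation $\eta*a$, which is also what makes $k(n)$ and $p(n)$ independent of $I$, of $a$, and of the family in part~(2).
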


The rest of this section will be devoted to the proof of Theorem \ref{theorem}.  


\subsection{Definition of $k(n)$ and $p(n)$.} \label{ssec:kp}
Let $\sfE= (p_{\kk}, f_{\kk}, h_\kk, P_{\kk,i}, s_\kk , r_\kk, \mathcal U_\kk)$ be an extended chain.   
By \eqref{normalcrossings}, we may express $\varphi_\kk $ of \eqref{varphik} as follows 
 \begin{align}\label{varphiksecond}
\varphi_\kk = h_\kk^ {\tal_\kk} \tilde \psi_\kk (y_{\kk,1}^{1/\tilde q_\kk}, \ldots , y_{\kk,r_\kk}^{1/\tilde q_\kk}, 
y_{\kk,r_\kk+1}, \ldots , y_{\kk,n}), 
\end{align}
 where $\tal_k \in \frac 1 {\tilde q_\kk} \N_{>0}$ and $\tilde q_\kk$ is a positive integer possibly much 
 bigger than $q_\kk$.   
Then we define  
\[
k_\sfE := \max _\kk \, \lceil \max \Big\{\frac {s_\kk} {\tal_\kk},{\tilde q_\kk}\Big\} \rceil, 
\]

 We fix an open bounded neighborhood $\mathcal B$ of the origin in $\C^n$ and a finite family of  extended chains  
\begin{align}\label{eqcover}
\cover = \{ \sfE_j \} =\{ (p_{j, \kk},f_{j, \kk}, h_{j,\kk}, P_{j,\kk,i}, s_{j,\kk} , r_{j,\kk},\mathcal U_{j, \kk}) \}
\end{align}
such that 
\begin{align*}
\sigma_n ^{-1} (\overline {\mathcal B}) \subset \bigcup_j \mathcal U_{j,n}.  
\end{align*}
Then we set    
\begin{align*}
	k = k(n)  := \max_{j}  k_{\sfE_j}, 	 \quad \text{ and } \quad
	p = p(n)  := \frac{k}{k-1} \, \in \Q_{>1} \cup\{\infty\}.
\end{align*}


\subsection{Real analytic case.} \label{analyticsubsection}
 We begin the proof of Theorem \ref{theorem} with the following special case. 
We suppose that $a(t)$ is real analytic and that 
$a(t) \in \mathcal B$ for all $t\in I$.  
We suppose moreover that the discriminant of $P_{a(t)}$ is not identically equal to zero. 
 Under these assumptions we show that  
\begin{align} \label{eq:estimate2}
    &\|\la_j'\|_{p,w, I} \le C(\cover) \sum _{j,\kk} N_{\tal_{j,\kk} ,k,s_{j,\kk} ,I}(h_{j,\kk} (t))
     \max_i \big\{H_{k ,I}(h_{j,\kk} (t)),H_{k ,I}(P_{j,\kk,i} (t))\big\},
\end{align}
where  the constant  $ C(\cover)$ depends only on the family $\cover$.  

Recall that $\lifta$ denotes the lift of $a|_{\Om_n}$ over $\si_n$, cf.\ Subsection \ref{ssec:formulas}. 
We remark that actually $a$ has a unique real analytic lift to $M_n$ on the whole interval $I$,
by the universal property of blowing-ups,  see \cite{Hartshorne77} Proposition 7.14; but we will not use this fact.

All the roots of $P_{a(t)}$ on $\Om_{n} $ are distinct and hence, by the Implicit Function Theorem,  depend analytically on $t$. 
Thus Lemmas \ref{estimate} and \ref{addendum} 
give \eqref{eq:estimate2} with $I$ replaced by $ \lifta^{-1} (\mathcal U_{n}) \cap \Om_n $.  
We set $I_i  := \lifta^{-1} (\mathcal U_{i,n})$ and $\Om_{i,n}:= I_{i} \cap \Om_{n}$. Then, 
by \eqref{subadditivity},  
\begin{align*} 
    &\|\la_j'\|_{p,w, \Om_n} \le \sum_i  \|\la_j'\|_{p,w, \Om_{i,n}}.
\end{align*}
Since $a(t)$ is real analytic,  $a^{-1}(\Sigma_n) = I \setminus \Om_{n} $ is finite, and hence 
the derivative $\la_j'$ of $\la_j$ exists almost everywhere in $I$ and belongs to $L^1(I)$ by \eqref{eq:qp}. 
It coincides with the distributional derivative of $\la_j$ in $I$, and 
$\|\la_j'\|_{p,w, I} = \|\la_j'\|_{p,w, \Om_n}$. 
(If $\ph \in C^\infty_c(I)$ and $(a,b)$ is a connected component of $\Om_n$, then 
$\int_a^b \la_j \ph' \,dt = \lim_{\ep \to 0^+} \int_{a+\ep}^{b-\ep} \la_j \ph' \,dt 
= \la_j(b) \ph(b) - \la_j(a) \ph(a) - \lim_{\ep \to 0^+} \int_{a+\ep}^{b-\ep} \la_j' \ph \,dt 
= \la_j(b) \ph(b) - \la_j(a) \ph(a) - \int_{a}^{b} \la_j' \ph \,dt$, by the Dominated Convergence Theorem. 
Since $I \setminus \Om_{n}$ is finite, all boundary terms cancel and thus $\int_I \la_j \ph' \,dt = - \int_I \la_j' \ph \,dt$.)
This implies \eqref{eq:estimate2}.


\subsection{Weighted homogeneity.}   
Let $a(t)$ be real analytic and  suppose that the discriminant of $P_{a(t)}$ is not identically equal to zero.    
We do not assume any longer that 
$a(t) \in \mathcal B$ for all $t\in I$.  We extend the bound of the previous subsection to such 
curves using the weighted homogeneity. 

 For $\eta>0$ and $a\in \C^n$ we define $\eta * a\in \C^n$ by $(\eta * a)_i = \eta^i a_i$.  Then $\lambda$ 
 is a root of $P_{a}$ if and only if $\eta  \lambda$ is a root of $P_{\eta *  a}$.   
 
 Fix $\rho \ge \max\{1,  \sup _{t\in I} \|a(t)\| \}$.
   Then $ \| \rho ^{-1}* a\| \le 1$.   For a polynomial $g\in \C[a]$, 
 set $\tilde g (t) : = g (\rho ^{-1}*  a (t))$.  Then  by \eqref{eq:estimate2}
\begin{align} \label{eq:estimate3}
    &\|\la_j'\|_{p,w, I} \le \rho\,  C(\cover) \sum _{j,\kk} N_{\tal_{j,\kk} ,k,s_{j,\kk} ,I}(\tilde h_{j,\kk} )
     \max_i \big\{H_{k ,I}(\tilde h_{j,\kk}) ,H_{k ,I}(\tilde P_{j,\kk,i} )\big\},
\end{align}



\subsection{General Case}  
Let $a\in C^{k}(I,\C^n)$.  By  the classical Weierstrass Theorem 
there is a sequence of polynomial curves $(a_\nu) \subset C^\om(I,\C^n)$, 
  such that 
  \[
  a_\nu \longrightarrow a \quad \text{ in } C^{k}(I,\C^n)  \quad (\nu \to \infty). 
  \]
    By replacing $a_\nu$ by $a_\nu + (0,\ldots,0,\varepsilon _\nu)$, with $\varepsilon_\nu > 0$ sufficiently small,   
    we may suppose moreover that  the discriminant of each $P_{a_\nu}$ is not identically zero.  
  
  For each $\nu$ choose a continuous parameterization 
  $\la_\nu = (\la_{\nu,1},\ldots,\la_{\nu,n}) \in C^0(I,\C^n)$ of the roots of $P_{a_\nu}(t)$, $t \in I$.  
  Since $(a_\nu)$ is bounded in $C^{k}(I,\C^n)$, we may infer from \eqref{eq:estimate2} that the set of distributional 
  derivatives $\{\la_\nu' ; \nu\}$ is bounded in $L^{q}(I,\C^n)$ for every $q \in [1,p)$. 
  
    Fix $q \in (1,p)$. 
  By the Arzel\'a--Ascoli Theorem, as $(\la_\nu)$ is equi-H\"older,    or alternatively by the   
  Rellich--Kondrachov Compactness Theorem, there is a subsequence $(\la_{\nu(\ell)})$ that 
  converges in $C^0(I,\C^n)$ to some $\la$. 

  Since $L^q(I)$ is reflexive, we also have (after possibly passing to a 
  subsequence again) that $(\la_{\nu(\ell)}')$ converges to some $\la'$ weakly in $L^q(I,\C^n)$. Then 
  $\la'$ is the distributional derivative of $\la$ and thus $\la \in W^{1,q}(I,\C^n)$.  
  It is clear that $\la$ forms a parameterization of the roots of $P_a$ on $I$.

\begin{lem}\label{comparison}
	Let $\la = (\la_1,\ldots,\la_n)$ and $\mu= (\mu_1,\ldots,\mu_n)$ be two parameterizations of the roots of 
	$P_a(t)(Z) \in C^{k}(I)[Z]$. If $\la \in W^{1,q}(I,\C^n)$, $q \in [1,p)$, and $\mu \in C^0(I,\C^n)$, 
	then also $\mu \in W^{1,q}(I,\C^n)$ and 
	\begin{equation} \label{eq2}
		 \sum_{i=1}^n \|\mu_i'\|_{L^q(I)}^q = \sum_{i=1}^n \|\la_i'\|_{L^q(I)}^q.  
	\end{equation}
\end{lem}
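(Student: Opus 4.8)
The plan is to prove, by induction on the degree $n$, the slightly stronger assertion that under the stated hypotheses $\mu\in W^{1,q}(I,\C^n)$ and, for almost every $t\in I$, the unordered $n$-tuples $(\mu_1'(t),\dots,\mu_n'(t))$ and $(\la_1'(t),\dots,\la_n'(t))$ coincide; in particular $\sum_i|\mu_i'(t)|^q=\sum_i|\la_i'(t)|^q$ a.e., and integrating over $I$ yields \eqref{eq2}. The case $n=1$ is immediate, since then $\mu_1=\la_1=-a_1\in C^k(I)$. For $n\ge 2$ it suffices to show that every $t_0\in I$ has an open neighbourhood $W$ on which $\mu\in W^{1,q}(W,\C^n)$ and $\sum_i|\mu_i'|^q=\sum_i|\la_i'|^q$ holds a.e.: since $I$ is compact, finitely many such $W$ cover it, the locally defined distributional derivatives patch to a global one lying in $L^q(I)$, and the finitely many exceptional null sets union to a null set. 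As a first step I would dispose of the open set $\Om:=\{t\in I:\on{disc}P_{a(t)}\neq 0\}$: on each connected component $J$ of $\Om$ the polynomial $P_{a(t)}$ has $n$ pairwise distinct roots for every $t$, so the permutation $\pi_t\in S_n$ determined by $\mu_i(t)=\la_{\pi_t(i)}(t)$ is well defined and, by continuity together with the separation of the roots, locally constant, hence constant on $J$; thus $\mu|_J$ is a fixed rearrangement of the components of $\la|_J$, and both conclusions hold on $\Om$.

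Next fix $t_0\in I\setminus\Om$ such that $P_{a(t_0)}$ has $m\ge 2$ distinct roots. A standard splitting lemma provides a neighbourhood $I_0\ni t_0$ and a factorization $P_{a(t)}=Q_1(t)\cdots Q_m(t)$ into monic polynomials with $C^k$ coefficients, each of degree $<n$. After shrinking $I_0$, the roots of $Q_\ell(t)$ are precisely the $\la_i(t)$ (respectively the $\mu_i(t)$) over those indices $i$ for which $\la_i(t_0)$ (respectively $\mu_i(t_0)$) equals the $\ell$-th root of $P_{a(t_0)}$, so these sub-tuples are continuous parameterizations of the roots of $Q_\ell$ on $I_0$, the $\la$-sub-tuple belonging to $W^{1,q}(I_0)$. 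Applying the inductive hypothesis to each $Q_\ell$ and summing over $\ell$ gives the desired local statement on $I_0$.

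It remains to treat $t_0\in I\setminus\Om$ at which all roots of $P_{a(t_0)}$ coincide. Performing a Tschirnhausen transformation --- a shift of all roots by the $C^k$ function $a_1/n$, under which both the $W^{1,q}$-membership and the a.e.\ coincidence of the tuples are clearly preserved --- we may assume $P_{a(t)}$ is in Tschirnhausen form, $P_{a(t_0)}(Z)=Z^n$, and $\la_i(t_0)=\mu_i(t_0)=0$ for all $i$. Choose a small interval $W\ni t_0$ with $\overline W\subset I$ and set $E':=\{t\in W:P_{a(t)}(Z)=Z^n\}=\{t\in W:a_2(t)=\cdots=a_n(t)=0\}$, a closed subset of $W$ on which every $\la_i$ and every $\mu_i$ vanishes. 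For $t\in W\setminus E'$ the polynomial $P_{a(t)}$ is monic, in Tschirnhausen form, and distinct from $Z^n$, hence has at least two distinct roots, so the case already treated gives $\mu|_{W\setminus E'}\in W^{1,q}_{\on{loc}}(W\setminus E')$ with $|\mu_i'|\le g:=\big(\sum_j|\la_j'|^q\big)^{1/q}\in L^q(W)$ and $\sum_i|\mu_i'|^q=\sum_i|\la_i'|^q$ a.e.\ there. Since $\{t\in W:\mu_i(t)\neq 0\}\subset W\setminus E'$, Lemma~\ref{extend} (with the exponent $p$ there taken to be our $q>1$; the case $q=1$ is covered by its straightforward $L^1$-version) shows that the distributional derivative of $\mu_i$ on $W$ is the extension of $\mu_i|_{W\setminus E'}'$ by zero, so $|\mu_i'|\le g$ a.e.\ on $W$ and $\mu_i\in W^{1,q}(W)$. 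Being absolutely continuous on $W$ and identically zero on $E'$, $\mu_i$ satisfies $\mu_i'=0=\la_i'$ a.e.\ on $E'$, whence $\sum_i|\mu_i'|^q=\sum_i|\la_i'|^q$ a.e.\ on all of $W$; undoing the Tschirnhausen transformation gives the statement for the original $\la$ and $\mu$.

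The crux is this last case: when all roots collide at $t_0$ there is no lower-degree factorization available near $t_0$, and one must control $\mu$ across the closed set $E'$ --- which may have positive measure --- on which $P_{a(t)}=Z^n$. What makes it work is that both parameterizations are forced to vanish on $E'$, so the zero set of each $\mu_i$ lies in the region $W\setminus E'$ already handled by the inductive step, and Lemma~\ref{extend} supplies the extension across $E'$. The remaining ingredients --- invariance of the statement under $C^k$ shifts of the roots, the elementary fact that an absolutely continuous function has vanishing derivative a.e.\ on any set on which it is constant, the splitting lemma, and the patching of local $W^{1,q}$ information into a global statement --- are routine.
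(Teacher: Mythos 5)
Your proof is correct, but it takes a genuinely different route from the paper's. You induct on the degree: away from total collision of the roots you split $P_a$ locally into lower-degree factors (Lemma~\ref{split}) and compare the two parameterizations factor by factor; at a point of total collision you pass to Tschirnhausen form and cross the possibly fat set $E'=\{t: P_{a(t)}(Z)=Z^n\}$ using Lemma~\ref{extend} together with the observation that both parameterizations vanish on $E'$. The paper instead disposes of the lemma in a few lines without touching the polynomial structure: each $\mu_j$ traces a curve of length at most $\sum_i \on{length}(\la_i)<\infty$, hence is of bounded variation, and $\mu_j(E)\subset\bigcup_i\la_i(E)$ gives the Luzin (N) property, so $\mu_j$ is absolutely continuous by the Banach--Zaretsky characterization; then at every point where all $\la_i$ and $\mu_j$ are differentiable the two unordered tuples of derivatives coincide, which yields \eqref{eq2}. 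The paper's argument is shorter and works for any two parameterizations of the same finite multi-valued function (one $W^{1,q}$, one continuous), whereas yours avoids Banach--Zaretsky and the slightly delicate pointwise comparison of derivative multisets by leaning on machinery the paper already has (splitting, Tschirnhausen, Lemma~\ref{extend}), at the cost of actually using that the functions are roots of a $C^k$ polynomial. Both arguments in fact prove the stronger a.e.\ coincidence of the unordered derivative tuples. Two points you should make explicit: the $L^1$ variant of Lemma~\ref{extend} you invoke for $q=1$ does hold with the identical proof (dominated convergence only needs $f|_{\Om_0}'\in L^1$), and in the splitting step $I_0$ must be shrunk so that the root clusters of the factors $Q_\ell$ remain separated, which is what guarantees that the sub-tuples of $\la$ and $\mu$ are parameterizations, with the correct multiplicities, of the roots of each individual $Q_\ell$.
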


\begin{proof}
	For each $j$ we have 
	\[
		\on{length}(\mu_j) \le \sum_{i=1}^n \on{length}(\la_i) < \infty 
	\]
	and so $\mu_j$ is of bounded variation. Moreover, for any subset $E \subset I$  
	\[
		\mu_j(E) \subset \bigcup_{i=1}^n \la_i(E)
	\]
	and hence $\mu_j$ has the Luzin \thetag{N} property. We may conclude that each $\mu_j$ is absolutely continuous 
	on $I$ and hence the derivative $\mu_j'$ of $\mu_j$ exists almost everywhere in $I$ and coincides with the 
	distributional derivative of $\mu_j$ in $I$. 

	At points $t$, where each $\mu_j$ and each $\la_i$ is differentiable, the sets 
$\{\mu_j'(t)\}$ and $\{\la_i'(t)\}$ coincide together with the multiplicities of its elements. 	
	These points form a subset of $I$ of full measure and therefore $\mu_j' \in L^q(I)$ and satisfies \eqref{eq2}. 
\end{proof}

Uniformity can be seen by repeating the proof with an additional parameter $\nu$.
The weak limits in the reasoning above are weakly bounded and thus bounded in $L^q(I)$.

\newpage

\section{Multiparameter families of polynomials} \label{sec:mult}

\begin{thm} \label{multpar}  
Let $k=k(n) \in \N_{>0}$ and  $p=p(n)>1$ be as in Subsection \ref{ssec:kp}.  
Let $U \subset \R^m$ be open and let 
	\[
		P_{a(x)}(Z) = Z^n + \sum_{j=1}^n a_j(x) Z^{n-j} \in C^{k}(U)[Z] 	
	\] 
	be a monic polynomial with coefficients $a_j \in C^{k}(U)$, $j = 1,\ldots,n$.   
	\begin{enumerate}
		\item 
		Let $\la \in C^0(V)$ represent a root of $P_a$, i.e., $P_a(\la) = 0$, on a relatively compact open subset 
		$V \Subset U$. 
		Then the distributional gradient of $\la$ in $V$ is a measurable function $\nabla \la \in [L^q(V)]^m$ for every 
		$q \in [1,p)$.
		In particular, $\la \in W^{1,q}(V)$ for every $q \in[1,p)$.
		\item The regularity of the roots is uniform. Let $\{P_{a_\nu} ; \nu \in \cN\}$,
		\[
			 P_{a_\nu(x)}(Z) = Z^n + \sum_{j=1}^n a_{\nu,j}(x) Z^{n-j} \in C^{k}(U)[Z], ~\nu \in \cN, 	
		\] 
		be a family of polynomials, indexed by $\nu$ in some set $\cN$, so that the set of coefficients 
		$\{a_{\nu,j} ; \nu \in \cN, j=1,\ldots,n\}$ is bounded in $C^{k}(U)$. 
		Let $V \Subset U$. 
		Then the set
		\[
			\qquad \{\la_{\nu} \in C^0(V);  P_{a_\nu}(\la_\nu)=0
			\text{ on $V$}, ~\nu \in \cN\}
		\]			
		is bounded in $W^{1,q}(V)$ for every $q \in [1,p)$.	\end{enumerate}
\end{thm}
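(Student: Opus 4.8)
The plan is to reduce the Multiparameter Theorem to the one-parameter Theorem~\ref{theorem} by slicing $V$ along lines parallel to the coordinate axes, controlling $\la$ slice by slice, and then invoking the characterization of Sobolev functions by absolute continuity on lines (the ACL property). Fix $q\in[1,p)$. Since $\overline V$ is compact and contained in $U$, cover $\overline V$ by finitely many open boxes $Q_1,\dots,Q_L$ with edges parallel to the coordinate axes and $\overline{Q_l}\subset U$; then each $a_j$ belongs to $C^{k}(\overline{Q_l})$ for every $l$, and, by the Cauchy bound $|\la|\le c(n)\max_j|a_j|^{1/j}$, the continuous function $\la$ is bounded on $V$, so $\la\in L^q(V)$. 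It suffices to show that $\partial_i\la\in L^q(V)$ for every $i$ and that $\la$ is absolutely continuous on the components of $V\cap\ell$ for every line $\ell$ parallel to a coordinate axis; the ACL characterization of $W^{1,q}$ (see, e.g., \cite{Leoni09}) then gives $\la\in W^{1,q}(V)$, with distributional gradient equal to the classical one.

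Fix $i$ and a box $Q_l=I_l\times Q_l'$, with $I_l$ an open interval in the $i$-th coordinate and $Q_l'$ an open box in the remaining coordinates. For $x'\in Q_l'$ the curve $t\mapsto a(te_i+x')$ lies in $C^{k}(\overline{I_l})$, and the family of these curves over $x'\in Q_l'$ is bounded in $C^{k}(\overline{I_l})$ by $\max_j\|a_j\|_{C^{k}(\overline{Q_l})}$. Pick for each $x'$ a continuous parameterization $\mu_1,\dots,\mu_n\in C^0(\overline{I_l})$ of the roots of $P_{a(te_i+x')}$ (it exists by \cite{Kato76}); by Theorem~\ref{theorem}(1)--(2) there is a constant $B_l$ depending only on $n,q,Q_l,\max_j\|a_j\|_{C^{k}(\overline{Q_l})}$ and not on $x'$ with $\sum_{s}\|\mu_s'\|_{L^q(I_l)}^q\le B_l$. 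The slice $t\mapsto\la(te_i+x')$ is defined on the open set $\Om_{x'}=\{t\in I_l; te_i+x'\in V\}$, a disjoint union of intervals $J$. On each $J$, Theorem~\ref{theorem}(1) gives $\la(te_i+x')|_{[c,d]}\in W^{1,q}([c,d])$ for every compact $[c,d]\subset J$, so the slice is $AC_{\mathrm{loc}}$ on $\Om_{x'}$; moreover the single continuous root $\la(te_i+x')|_J$ completes to a continuous parameterization of all roots on $J$ (\cite{RainerN}), whence Lemma~\ref{comparison}, applied on each $[c,d]\subset J$ to this parameterization and to $\mu$, yields $\int_J|\partial_t\la(te_i+x')|^q\,dt\le\sum_s\int_J|\mu_s'|^q\,dt$. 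Summing over the disjoint $J$'s gives $\int_{\Om_{x'}}|\partial_t\la(te_i+x')|^q\,dt\le\sum_s\|\mu_s'\|_{L^q(I_l)}^q\le B_l$, uniformly in $x'$.

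By Fubini's theorem, $\int_{V\cap Q_l}|\partial_i\la|^q=\int_{Q_l'}\big(\int_{\Om_{x'}}|\partial_t\la(te_i+x')|^q\,dt\big)\,dx'\le B_l\,\cL^{m-1}(Q_l')<\infty$; summing over $l=1,\dots,L$ shows $\partial_i\la\in L^q(V)$, and doing this for each $i$ gives $\nabla\la\in[L^q(V)]^m$. Together with the absolute continuity of $\la$ on coordinate lines established above, the ACL characterization yields $\la\in W^{1,q}(V)$, which is (1). Statement (2) follows from the same argument carried out with an extra index $\nu$: a $C^{k}(U)$-bound on $\{a_{\nu,j}\}$ gives a $C^{k}(\overline{Q_l})$-bound for each $l$, hence a $\nu$-independent $B_l$, so $\{\nabla\la_\nu\}$ is bounded in $[L^q(V)]^m$; combined with the uniform Cauchy bound on $|\la_\nu|$ this shows $\{\la_\nu\}$ is bounded in $W^{1,q}(V)$.

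The step requiring the most care is the slicing bookkeeping. The one-parameter estimate must be applied on intervals on which the coefficients are $C^{k}$, which forces the covering of $V$ by boxes rather than working with an ambient slice $V\cap\ell$, whose convex hull need not lie in $U$. The slice $\Om_{x'}$ may have infinitely many component intervals $J$, and these must contribute a finite total; this is precisely why one compares, via Lemma~\ref{comparison}, with the single parameterization $\mu$ living on the whole interval $I_l$ and exploits that the $J$'s are pairwise disjoint subintervals of $I_l$. Finally, one must identify the a.e.-defined slice derivative with the distributional partial derivative on $V$, which is exactly what the ACL characterization provides; the remaining points (measurability of the slice integrals, Tonelli) are routine.
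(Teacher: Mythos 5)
Your proof is correct and follows essentially the same route as the paper: cover $V$ by coordinate boxes, slice along lines parallel to the axes, apply Theorem~\ref{theorem} together with Lemma~\ref{comparison} to bound the slice derivatives uniformly, and conclude via Fubini and the ACL characterization. The only cosmetic difference is that you compare each connected component of the slice domain directly with one fixed continuous parameterization $\mu$ on the whole interval $I_l$, whereas the paper extends the componentwise parameterizations from each component to the full interval before comparing; both devices accomplish the same summation over the (possibly infinitely many) components.
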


\begin{rem}
	The roots of a polynomial depending on at least two parameters do in general not admit a continuous parameterization due to monodromy.  For instance, the radical $\C \ni (x+iy) \mapsto (x+iy)^{1/n}$ 
	does not admit  continuous 
	parameterizations on $\C$. 
\end{rem}

\begin{proof}[Proof of Theorem \ref{multpar}]
	By Theorem~\ref{theorem}, $\la$ is absolutely continuous along each affine line parallel to 
	the coordinate axes. So $\la$ possesses the partial derivatives $\p_i \la$, $i=1,\ldots,m$, which 
	are defined almost everywhere and are measurable. 
	It clearly suffices to show that all partial derivatives $\p_j \la$ belong to $L^q(V)$, for every $q \in [1,p)$. 
	
	Set $x=(t,y)$, where $t=x_1$, $y=(x_2,\ldots,x_m)$, and let $V_1$ be the orthogonal projection of $V$ on the hyperplane $\{x_1=0\}$.
    For each $y \in V_1$ we denote by $V^y := \{t \in \R ; (t,y) \in V\}$ the corresponding section of $V$; note that $V^y$ is open in $\R$. 
    Then by Fubini's Theorem,
    \begin{equation} \label{Fubini}
    	\int_V |\p_1 \la(x)|^q\, dx = \int_{V_1} \int_{V^y} |\p_1 \la(t,y)|^q\, dt\, dy.
    \end{equation}
 	
 	We may cover $V$ by finitely many open boxes $K = I_1 \times \cdots \times I_m$ contained in $U$.
  Let $K$ be fixed and set $L = I_2 \times \cdots \times I_m$. 
  Fix $y \in V_1 \cap L$ and let $\la^y_j$, $j=1,\ldots,n$, be a continuous parameterization of the roots of $P_a(~,y)$ on 
  $\Om^y := V^y \cap I_1$ such that $\la(~,y) = \la^y_1$; it exists 
  since $\la(~,y)$ can be completed to a continuous parameterization of the roots of $P_a(~,y)$ on each connected component 
  of $\Om^y$
  by Lemma 6.17 of \cite{RainerN}. 
  Our goal is to bound  
  \[
  	\|\p_t\la(~,y)\|_{L^q(\Om^y)} = \|(\la^y_1)'\|_{L^q(\Om^y)}
  \]
  uniformly with respect to $y \in V_1 \cap L$. 

  To this end let $\cC^y$ denote the set of connected components $J$ of the open set $\Om^y$. 
  For each $J \in \cC^y$ extend the parameterization $\la^y_j|_J$, $j=1,\ldots,n$, continuously to $I_1$, i.e., choose
  \begin{gather*}
  	 	\la^{y,J}_j \in C^0(I_1), \quad j=1,\ldots,n, \quad \text{ so that } \\ 
  	 	\forall j\quad  \la^{y,J}_j|_J = \la^y_j|_J \quad   \text{ and } \quad 
  	 	P_a(t,y)(Z) = \prod_{j=1}^n (Z-\la^{y,J}_j(t)), ~t \in I_1.
  \end{gather*} 
  This is possible since $\la^y_j|_J$ has a continuous extension to the endpoints of the (bounded) interval $J$, by 
  Lemma 4.3 of \cite{KLMR05}, and can then be extended on the left and on the right of $J$ by a continuous parameterization 
  of the roots of $P_a(~,y)$ on $I_1$ after suitable permutations.

  By Theorem~\ref{theorem}, for each $y \in V_1\cap L$, each $J \in \cC^y$, and each $j=1,\ldots,n$, 
  $\la^{y,J}_j$ is absolutely continuous on $I_1$ and $(\la^{y,J}_j)' \in L^q(I_1)$ with 
  \begin{equation} \label{eq:ub}
  	\sup_{y,J,j} \|(\la^{y,J}_j)'\|_{L^q(I_1)} < \infty.
  \end{equation}
  Let $J,J_0 \in \cC^y$ be arbitrary. 
  By Lemma~\ref{comparison}, $(\la^y_j)'$ as well as $(\la^{y,J_0}_j)'$ belong to $L^q(J)$ and 
  we have  
  \begin{equation*}
  	\sum_{j=1}^n \|(\la^y_j)'\|_{L^q(J)}^q 
  	= \sum_{j=1}^n \|(\la^{y,J}_j)'\|_{L^q(J)}^q = \sum_{j=1}^n \|(\la^{y,J_0}_j)'\|_{L^q(J)}^q.
  \end{equation*}
  Thus, for arbitrary fixed $J_0 \in \cC^y$, 
  \begin{align*}
  	\sum_{j=1}^n \|(\la^y_j)'\|_{L^q(\Om^y)}^q 
  	&= \sum_{J \in \cC^y} \sum_{j=1}^n \|(\la^y_j)'\|_{L^q(J)}^q \\
  	&= \sum_{J \in \cC^y} \sum_{j=1}^n \|(\la^{y,J_0}_j)'\|_{L^q(J)}^q \\ 
  	&= \sum_{j=1}^n \|(\la^{y,J_0}_j)'\|_{L^q(\Om^y)}^q \\
  	&\le \sum_{j=1}^n \|(\la^{y,J_0}_j)'\|_{L^q(I_1)}^q
  \end{align*}
  In view of \eqref{eq:ub} we may conclude that   
  $\sup_{y \in V_1 \cap L} \|(\la^y_1)'\|_{L^q(\Om^y)} < \infty$. 
  By \eqref{Fubini} and since the number of boxes $K$ is finite, this implies that $\p_1 \la \in L^q(V)$.
  The other partial derivatives can be treated analogously. This shows (1).

  In order to see (2) it suffices to repeat the proof of (1) paying attention to the additional dependence on $\nu$.
\end{proof}


\part{Formulas for the roots.  Proof of Theorem \ref{roots}}\label{part2}

\bigskip
\section{Strategy of the proof} \label{strategy}
The main ideas of the proof of  Theorem \ref{roots} are the following.  
 Let $x=(x_1, \ldots , x_r)$ be local coordinates at $0\in \C^r$.  Suppose that $a_i \in \C\{x\}$ and 
let 
\[
		P_a(Z) = Z^n + \sum_{j=1}^n a_j Z^{n-j}.  	
	\] 
	Thus we may consider  $\sD_{\kk}$, defined in Section \ref{sec:formulas}, 
as an ideal of $\C\{x\}$.   If $a_1=0$ and $\sD_2$ is principal and generated by a monomial, then we may split 
$P_a$, that is factor it $P_a = P_b P_c$; see Step 2 of the proof, Section \ref{proofofmaintheorem}. 
This requires introducing fractional powers.  
If we can continue this process by splitting $P_b$, $P_c$, and then their factors, etc., then we eventually 
arrive at linear factors (i.e. of degree $1$) whose 
coefficients are the roots.  As we show in the next three sections 
this can be guaranteed by the principalization of the higher order discriminant ideals $\sD_{\kk}$.  
 
A subtle point of Theorem \ref{roots} and hence of its proof is to obtain  the exponent 
$\alpha_\kk$ 
 in \eqref{varphik} 
strictly positive.  This forces us to blow-up the ideals $\sD_\kk$ one by one.  Then we put each 
factor in Tschirnhausen  form, which amounts to subtracting a fraction of its first coefficient 
 from the roots.  The remaining part 
of the roots vanishes on $V(\sD_{\kk+1})$  and hence we may continue.  

This consecutive splitting process can be compared to the proof of the Abhyankar-Jung Theorem 
of \cite{ParusinskiRond2012}, that gives a formula for the roots in one shot by making the discriminant normal crossing, 
but without the property $\alpha_\kk>0$ which is crucial for us.  
In the splitting process of \cite{ParusinskiRond2012}, at each stage, the coefficients of the 
factors, say defined on $M_{\kk+1}$, are expressed in terms of their product, which is well-defined on $M_{\kk}$.  
The complexity of our proof comes from the fact that,  in the formula \eqref{varphik}, we need each $f_{\kk}$ to be 
the pull-back of a polynomial in the coefficients $a_i$, that is, of a polynomial defined on $\C^n$. 
Similarly, each $y_{\kk,i}$ has to be the pull-back of a rational function on $\C^n$.  

\section{A characterization of principality} \label{principality}

Let $x=(x_1, \ldots , x_r)$ be local coordinates at $0\in \C^r$.  Suppose that $a_i \in \C\{x\}$, 
for $i=1, \ldots , n$.  We denote by $\xi (x) = \{ \xi_1 (x), \ldots , \xi _n(x)\}$ the unordered set of roots of $P_a$.    
Let 
$$
\size_{\kk} (x) = \max_{|I|=\kk} \prod_{i\ne j \in I} |\xi_i (x) -\xi_j(x)|.   
$$
By definition  $\size_\kk$ is the germ at the origin of a non-negative real-valued function of $x\in \C^r$.

\begin{prop}\label{criterion}
\hfill
\begin{enumerate}
\item
There is a finite family  $g_1, \ldots, g_p \in \sD_{\kk}$ such that 
$  \size_\kk^{N!/\kk(\kk-1)} \sim \max_j |g_j|  .$
 Moreover, we may take as $g_1, \ldots, g_p$ any system of generators of $\sD_{\kk}$.  
 \item 
 $g\in \sD_{\kk}$ generates  $\sD_{\kk}$ if and only if $ \size_\kk^{N!/\kk(\kk-1)} 
 \sim |g| $.
\end{enumerate}
\end{prop}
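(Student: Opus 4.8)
The plan is to reduce everything to a \L{}ojasiewicz-type comparison between the size function $\size_\kk$ and the generators of $\sD_\kk$, exploiting both the algebraic structure (homogeneity, symmetry) and the geometric meaning of $V(\sD_\kk)$ sketched after Proposition~\ref{firststep}. First I would recall the definition: $\sD_\kk$ is generated by the elements $f^{N!/s}$ where $f$ runs over permutation-invariant homogeneous polynomials of degree $s \le N$ in the expressions $\prod_{i \ne j \in I}(\xi_i - \xi_j)$, $|I| = \kk$. The key elementary observation is that each such $f$, being invariant and expressible through the $\prod_{i\ne j\in I}(\xi_i-\xi_j)$, satisfies $|f(x)| \lesssim \big(\max_{|I|=\kk}\prod_{i\ne j\in I}|\xi_i(x)-\xi_j(x)|\big)^{s/(\kk(\kk-1))} = \size_\kk(x)^{s/(\kk(\kk-1))}$, since $\prod_{i\ne j\in I}(\xi_i-\xi_j)$ is homogeneous of degree $\kk(\kk-1)$ in $\xi$ and $f$ is homogeneous of degree $s$; hence $|f^{N!/s}| \lesssim \size_\kk^{N!/(\kk(\kk-1))}$. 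This gives one inequality $\max_j|g_j| \lesssim \size_\kk^{N!/(\kk(\kk-1))}$ for any generating family, and in fact for any finite subset of $\sD_\kk$.

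For the reverse inequality the point is to produce, among the generators, enough polynomials to detect the maximum. Fix $x$ and choose $I_0$ with $|I_0|=\kk$ realizing $\size_\kk(x) = \prod_{i\ne j\in I_0}|\xi_i(x)-\xi_j(x)|$. The polynomial $g := \big(\prod_{i\ne j\in I_0}(\xi_i-\xi_j)\big)$ is not itself symmetric, but its orbit sum or, better, a suitable symmetric power sum of the quantities $\{\prod_{i\ne j\in I}(\xi_i-\xi_j)\}_{|I|=\kk}$ is: take $f := \sum_{|I|=\kk}\big(\prod_{i\ne j\in I}(\xi_i-\xi_j)\big)^{2M}$ for large even exponent so that it is homogeneous of degree $s = 2M\kk(\kk-1) \le N$ (this is where the constant $N$ being large is used) and permutation-invariant; then $|f(x)| \gtrsim \size_\kk(x)^{2M}$ up to a constant depending only on the number of subsets $I$, so $|f^{N!/s}(x)| \gtrsim \size_\kk(x)^{N!/(\kk(\kk-1))}$. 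Since $f^{N!/s} \in \sD_\kk$ and $\sD_\kk$ is finitely generated, $f^{N!/s}$ is a $\C[a]$-combination of any given generating family $g_1,\dots,g_p$, so $|f^{N!/s}(x)| \lesssim \max_j|g_j(x)|$ on a neighborhood of $0$ (the coefficient functions are bounded there). Combining the two estimates yields (1). The addendum "we may take any system of generators" is then automatic from the argument, since the lower bound used an arbitrary generating family.

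For (2): if $g$ generates $\sD_\kk$ then applying (1) to the one-element family $\{g\}$ gives $\size_\kk^{N!/(\kk(\kk-1))} \sim |g|$. Conversely, suppose $\size_\kk^{N!/(\kk(\kk-1))} \sim |g|$ for some $g \in \sD_\kk$. Pick any generators $g_1,\dots,g_p$; by (1), $|g_i| \lesssim \size_\kk^{N!/(\kk(\kk-1))} \sim |g|$ for each $i$. I would then invoke that $\sD_\kk$ is generated by (pullbacks of) polynomials and that a bound $|g_i| \le C|g|$ near $0$ together with $g_i, g$ being analytic (indeed polynomial) forces $g_i/g$ to be bounded, hence extends holomorphically, hence $g_i \in (g)$ in $\C\{x\}$ — so $(g) = \sD_\kk \cdot \C\{x\}$, i.e.\ $g$ generates. (When working in $\C[a]$ rather than $\C\{x\}$ one argues locally at each point of $V(\sD_\kk)$ and uses that an invertible ideal which is principal at every point of the relevant variety is globally principal on the chart, or simply reads "generates" in the local-ring sense as the statement intends.)

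The main obstacle I expect is the reverse inequality in (1): constructing an honest element of $\sD_\kk$ — genuinely symmetric, of the right bounded degree, and with the prescribed power — that bounds $\size_\kk$ from below uniformly over all $x$, handling the case where the maximizing subset $I_0$ varies with $x$. The power-sum-of-even-powers trick above should work, but one must check the degree constraint $s \le N$ is compatible with the choice $N \ge \max_s\binom{n}{s}$ built into the definition (this is exactly why that constant was introduced), and one must be careful that the implied constants in $\sim$ depend only on $n$ and the chosen generating family, not on $x$ — which is fine since everything is a germ at $0$ and the coefficient polynomials expressing $f^{N!/s}$ in terms of the $g_j$ are fixed.
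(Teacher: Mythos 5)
Your upper bound in (1) and your treatment of (2) are fine and essentially match the paper's argument (including the extension of bounded quotients of holomorphic functions for the converse in (2)). The genuine gap is the lower bound in (1): you claim that the single even power sum $f=\sum_{|I|=\kk}\lambda_I^{2M}$, with $\lambda_I=\prod_{i\ne j\in I}(\xi_i-\xi_j)$, satisfies $|f(x)|\gtrsim \size_\kk(x)^{2M}$ with a constant depending only on the number of subsets. The $\lambda_I$ are \emph{complex}, so an even exponent does not prevent cancellation: already for two quantities $\lambda_{I_1}=1$ and $\lambda_{I_2}=e^{i\pi/(2M)}$ one has $\lambda_{I_1}^{2M}+\lambda_{I_2}^{2M}=0$ while $\max_I|\lambda_I|=1$. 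The general principle you invoke ("a power sum of even powers dominates the max") holds for real quantities only; for complex ones no single symmetric polynomial in the $\lambda_I$ will do, and you give no argument that the special structure of the $\lambda_I$ excludes such cancellation.

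The paper's fix is to use a whole family at once: it takes as $g_1,\dots,g_p$ the powers $\sigma_i^{N!/(i\kk(\kk-1))}$ of \emph{all} the elementary symmetric functions $\sigma_i$, $i=1,\dots,\binom{n}{\kk}$, of the $\lambda_I$. The upper bound for these is exactly your homogeneity estimate. The lower bound comes from Lemma~\ref{lem:boundedroots}: the $\lambda_I$ are the roots of the monic polynomial $\prod_{|I|=\kk}(\Lambda-\lambda_I)$ whose coefficients are $\pm\sigma_i$, so $\size_\kk=\max_I|\lambda_I|\le 2\max_i|\sigma_i|^{1/i}$, and raising to the power $N!/\kk(\kk-1)$ gives $\size_\kk^{N!/\kk(\kk-1)}\le C\max_i\big|\sigma_i^{N!/(i\kk(\kk-1))}\big|$. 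Once one explicit family with $\max_j|g_j|\sim\size_\kk^{N!/\kk(\kk-1)}$ is in hand, the passage to an arbitrary system of generators and all of part (2) go through exactly as you wrote them. You could also repair your own approach by taking \emph{all} power sums $p_1,\dots,p_{\binom{n}{\kk}}$ of the $\lambda_I$ and passing to the $\sigma_i$ via Newton's identities, but you would still need the root-versus-coefficient bound at the end, so nothing is gained over the elementary symmetric functions.
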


\begin{proof}
\thetag{1}
We can choose as $g_1, \ldots, g_p $ the powers $\sigma_i ^{N!/i\kk(\kk-1)}$ of the elementary symmetric functions 
$\sigma_i $,  $i=1, \ldots , \binom  n \kk$,  in 
$\lambda_I=  \prod_{i\ne j \in I} (\xi_i (x) -\xi_j(x))$, $I\subset \{1, \ldots , n\}$, $|I|=\kk$.  
Indeed 
\begin{align*}
	\Big| &\sigma_s ( \lambda_I ; |I|=\kk ) \Big|^ {\frac{N!}{s\kk(\kk-1)}} 
	\le  \binom {\binom{n}{\kk}} s^ {\frac{N!}{s\kk(\kk-1)}}  \max _I |\lambda_I| ^ {\frac{N!}{\kk(\kk-1)}} 
	= \binom {\binom{n}{\kk}} s^ {\frac{N!}{s\kk(\kk-1)}} \size_\kk(x)^ {\frac{N!}{\kk(\kk-1)}}.
\end{align*}
The converse estimate follows from Lemma \ref{lem:boundedroots} below.

\thetag{2}
By (1) if $g$ generates  $\sD_{\kk}$ then $|g| \sim  \size_\kk^{N!/\kk(\kk-1)} $.  Suppose now that 
$|g| \sim  \size_\kk^{N!/\kk(\kk-1)} $. Then for any $f\in \sD_{\kk}$ the quotient $f/g$ is bounded and hence  
holomorphic.  
\end{proof}

\begin{lem}[\cite{Malgrange67} p.~56, or \cite{RS02} Theorem~1.1.4.] \label{lem:boundedroots}
	If $a_1,\ldots,a_n, z \in \C$ satisfy the equation $z^n + \sum_{j=1}^n a_j z^{n-j} =0$, 
	then $|z| \le 2 \max_j |a_j|^{1/j}$. 
\end{lem}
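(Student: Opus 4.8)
The plan is to use the classical Cauchy-type argument bounding the roots of a monic polynomial by its coefficients. Set $M := \max_{1 \le j \le n} |a_j|^{1/j}$. I would first dispose of the degenerate cases: if $z = 0$ the inequality is trivial (note $M \ge 0$), and if $M = 0$ then $a_1 = \cdots = a_n = 0$, so the equation forces $z^n = 0$ and hence $z = 0$. Thus from now on I may assume $z \ne 0$ and $M > 0$.

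Next I would isolate the leading term. Rewriting the equation as $z^n = -\sum_{j=1}^n a_j z^{n-j}$ and applying the triangle inequality gives $|z|^n \le \sum_{j=1}^n |a_j|\,|z|^{n-j}$. Dividing by $|z|^n$ and using $|a_j| \le M^j$ yields
\[
  1 \le \sum_{j=1}^n \Big(\frac{M}{|z|}\Big)^j .
\]
Put $t := M/|z| > 0$. If $t \ge 1$, then directly $|z| \le M \le 2M$. If $t < 1$, I bound the finite sum by the convergent geometric series: $1 \le \sum_{j=1}^n t^j < \sum_{j=1}^\infty t^j = \frac{t}{1-t}$, so $1 - t < t$, i.e.\ $t > \tfrac{1}{2}$, which gives $|z| < 2M$. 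In every case $|z| \le 2M = 2\max_j |a_j|^{1/j}$, as claimed.

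This argument presents no genuine obstacle; the only points requiring a little care are separating the cases $t \ge 1$ and $t < 1$ so that the geometric-series estimate is legitimate, and handling the degenerate cases $z = 0$ and $M = 0$ at the outset. Alternatively, one could simply cite the references \cite{Malgrange67} or \cite{RS02} verbatim, but the self-contained proof above is short enough to include.
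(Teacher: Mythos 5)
Your argument is correct and complete, including the careful handling of the degenerate cases and the case split on $t=M/|z|$ that legitimizes the geometric-series estimate. The paper itself gives no proof of this lemma --- it simply cites \cite{Malgrange67} and \cite{RS02} --- and your self-contained derivation is exactly the standard Cauchy-type argument found in those references, so there is nothing to add.
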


\begin{cor}\label{divides}
If $\sD_{\kk}$ is principal then  $\sD_{l} \subset \sD_{\kk}$ for $l \ge \kk$. 
\end{cor}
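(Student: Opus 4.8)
The plan is to reduce the inclusion to a pointwise comparison of the size functions $\size_\kk$ of Section~\ref{principality} and then to quote Proposition~\ref{criterion}. Assume $\sD_\kk$ is principal, say $\sD_\kk = (g)$ with $g \in \C\{x\}$; we may assume $g \not\equiv 0$, for otherwise $\sD_\kk = (0)$, which by Proposition~\ref{criterion} forces $\size_\kk \equiv 0$, hence (by the comparison below) $\size_l \equiv 0$ and $\sD_l = (0) = \sD_\kk$. By Proposition~\ref{criterion}\thetag{2} we have $|g| \sim \size_\kk^{N!/\kk(\kk-1)}$ near $0$. For a fixed $f \in \sD_l$, writing $f$ as a $\C\{x\}$-linear combination of a finite system of generators of $\sD_l$ and using that holomorphic germs are locally bounded together with Proposition~\ref{criterion}\thetag{1}, we get $|f| \lesssim \size_l^{N!/l(l-1)}$ near $0$. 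Thus everything reduces to the pointwise bound
\[
  \size_l(x)^{N!/l(l-1)} \le \size_\kk(x)^{N!/\kk(\kk-1)} \qquad (x \text{ near } 0,\ \kk \le l);
\]
granting it, $|f| \lesssim \size_\kk^{N!/\kk(\kk-1)} \sim |g|$ near $0$, so the quotient $f/g$ is bounded and hence holomorphic, i.e.\ $f \in (g) = \sD_\kk$. Since $f \in \sD_l$ was arbitrary, $\sD_l \subseteq \sD_\kk$.

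The heart of the matter — and the step I expect to require the only real work — is the displayed bound, which I would isolate as a lemma: the normalized quantity $D_j(x) := \size_j(x)^{1/j(j-1)}$ is nonincreasing in $j$ for $j \ge 2$. To prove $D_{j-1}(x) \ge D_j(x)$, fix $x$, choose $I$ with $|I| = j$ realizing $\size_j(x) = \pi(I)$, where $\pi(I) := \prod_{i \ne i' \in I}|\xi_i(x) - \xi_{i'}(x)|$, and observe that
\[
  \prod_{i \in I}\Big(\prod_{i' \in I \setminus \{i\}} |\xi_i(x) - \xi_{i'}(x)|\Big) = \pi(I),
\]
so some index $i_0 \in I$ satisfies $\prod_{i' \ne i_0}|\xi_{i_0}(x) - \xi_{i'}(x)| \le \pi(I)^{1/j}$ (a minimum is at most the geometric mean). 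Deleting $i_0$ gives $I' := I \setminus \{i_0\}$ of cardinality $j-1$ with $\pi(I') = \pi(I)\big/\big(\prod_{i' \ne i_0}|\xi_{i_0}(x) - \xi_{i'}(x)|\big)^2 \ge \pi(I)^{(j-2)/j}$, whence $\size_{j-1}(x) \ge \pi(I') \ge \size_j(x)^{(j-2)/j}$, which is exactly $D_{j-1}(x) \ge D_j(x)$. Iterating from $j=l$ down to $j=\kk+1$ (all indices stay $\ge 2$) yields $D_\kk(x) \ge D_l(x)$, and raising to the $N!$-th power gives the displayed bound; the degenerate case $\size_j(x) = 0$ is trivial.

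The remaining ingredients are routine. The local boundedness of the coefficients occurring when $f$ is expanded in terms of generators of $\sD_l$ is immediate, and the passage from ``$|f| \lesssim |g|$ near $0$'' to ``$f/g$ holomorphic, so $f \in (g)$'' is precisely the boundedness-implies-holomorphy step (Riemann extension across the hypersurface $V(g)$) already used in the proof of Proposition~\ref{criterion}\thetag{2}. So the whole argument hinges on the elementary monotonicity of $D_j$, which in turn is the one-line AM--GM estimate above applied to an extremal configuration of roots.
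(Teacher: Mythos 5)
Your proof is correct and follows exactly the route the paper intends: the corollary is stated without proof as a consequence of Proposition~\ref{criterion}, and your argument (reduce to the pointwise comparison of size functions, then use the bounded-implies-holomorphic step already employed in the proof of Proposition~\ref{criterion}\thetag{2}) is the natural instantiation, with the monotonicity of $\size_j^{1/j(j-1)}$ correctly established by your root-deletion/AM--GM lemma. Nothing to object to.
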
 

\begin{cor}\label{zeroset}
The zero set of $\sD_{m}$ equals $ \{x; |\{\xi_1(x), \ldots, \xi_n(x)\}| < \kk\}$. 
\end{cor}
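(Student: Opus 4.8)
The plan is to read the identity off from Proposition~\ref{criterion}(1) together with an elementary pigeonhole count. First I would observe that, since $\sD_{\kk}$ is generated by the finite family $g_1,\ldots,g_p$ supplied by Proposition~\ref{criterion}(1), its zero set is $V(\sD_{\kk}) = \{x : g_1(x)=\cdots=g_p(x)=0\} = \{x : \max_j |g_j(x)| = 0\}$, and the equivalence $\size_{\kk}^{N!/\kk(\kk-1)} \sim \max_j|g_j|$ then identifies this with $\{x : \size_{\kk}(x) = 0\}$.

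It remains to compute the vanishing locus of $\size_{\kk}(x) = \max_{|I|=\kk}\prod_{i\neq j\in I}|\xi_i(x)-\xi_j(x)|$. Writing $\lambda_I := \prod_{i\neq j\in I}(\xi_i-\xi_j)$ as in the proof of Proposition~\ref{criterion}, one has $\size_{\kk}(x)=0$ precisely when $\lambda_I(x)=0$ for every $I\subset\{1,\ldots,n\}$ with $|I|=\kk$. If the unordered root set $\{\xi_1(x),\ldots,\xi_n(x)\}$ has fewer than $\kk$ elements, then every index set $I$ of cardinality $\kk$ must contain two indices $i\neq j$ with $\xi_i(x)=\xi_j(x)$, so $\lambda_I(x)=0$ and hence $\size_{\kk}(x)=0$. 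Conversely, if there are at least $\kk$ distinct roots, pick indices $i_1,\ldots,i_{\kk}$ with $\xi_{i_1}(x),\ldots,\xi_{i_{\kk}}(x)$ pairwise distinct; then for $I=\{i_1,\ldots,i_{\kk}\}$ we get $\lambda_I(x)\neq0$, so $\size_{\kk}(x)>0$. Putting the two steps together yields $V(\sD_{\kk}) = \{x : |\{\xi_1(x),\ldots,\xi_n(x)\}| < \kk\}$.

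I do not expect any genuine obstacle: all the analytic content is already contained in Proposition~\ref{criterion}(1) (which itself rests on Lemma~\ref{lem:boundedroots}), and what is left is the trivial counting remark above. The only point deserving a word of care is that $\size_{\kk}$ is a bona fide non-negative function of $x$, independent of any choice of continuous labelling of the roots, because it is symmetric in $\xi_1,\ldots,\xi_n$, being a maximum over all $\kk$-element subsets; this is what makes the phrase ``zero set of $\size_{\kk}$'' meaningful at each point $x$.
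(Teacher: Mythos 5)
Your argument is correct and is precisely the deduction the paper intends (the corollary is stated without proof, as an immediate consequence of Proposition~\ref{criterion}(1)): the equivalence $\size_{\kk}^{N!/\kk(\kk-1)}\sim\max_j|g_j|$ identifies $V(\sD_{\kk})$ with the zero set of $\size_{\kk}$, and the pigeonhole count finishes it. No issues.
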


\begin{cor}\label{firstblowup}
Suppose that $a_1= 0$.   Then $\sD_{2}$ is principal if and only if so is 
$(a_2^{N!/2}, \ldots , a_n^{N!/n})$, and then both ideals coincide.   

Thus, if $\sD_{2} = (g)$ then $g$ divides each $a_i^{N!/i}$ and there is $i_0$ such that 
$|g| \sim |a_{i_0}^{N!/i_0}|$.  
\end{cor}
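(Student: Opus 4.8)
The plan is to reduce the assertion to a single elementary pointwise inequality, to the description of $\sD_2$ from Proposition~\ref{firststep}, and to basic facts about the local ring $\C\{x\}$; Proposition~\ref{criterion} can be brought in as well but is not actually needed. Throughout set $\sJ := (a_2^{N!/2}, \ldots, a_n^{N!/n}) \subset \C\{x\}$, noting that $N!/i \in \N$ for $2 \le i \le n$ since $N \ge n$. Each $a_i^{N!/i}$ is a weighted homogeneous polynomial in $a_2,\ldots,a_n$ of weighted degree $N!$, so by Proposition~\ref{firststep} we have $\sJ \subseteq \sD_2$, and moreover $\sD_2$ is generated by the finitely many monomials $m(a) = a_2^{e_2}\cdots a_n^{e_n}$ with $\sum_{i=2}^n i\,e_i = N!$ (evaluated on $a(x)$). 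We may assume that some $a_i \not\equiv 0$, for otherwise $P_a = Z^n$ and $\sD_2 = (0) = \sJ$.

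The crucial estimate is that for every monomial $m$ as above one has, pointwise near the origin,
\[
  |m(a(x))| \le \Big(\max_{2\le j\le n}|a_j(x)|^{1/j}\Big)^{N!} = \max_{2\le i\le n} |a_i(x)^{N!/i}| ,
\]
which follows at once from $|a_i(x)| \le \big(\max_{2\le j\le n}|a_j(x)|^{1/j}\big)^i$ and $\sum_i i\,e_i = N!$. Since the $m(a(x))$ generate $\sD_2$, this yields a constant $C$ with $|g| \le C\,\max_{2\le i\le n}|a_i^{N!/i}|$ near $0$ for every $g \in \sD_2$.

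From here the two implications come out quickly. If $\sJ = (f)$, then $f \ne 0$ (otherwise every $a_i \equiv 0$); writing $a_i^{N!/i} = f c_i$ with $c_i \in \C\{x\}$, the estimate gives $|g| \le C\,|f|\max_i|c_i| \le C'|f|$ near $0$ for every generator $g$ of $\sD_2$, so $g/f$ is holomorphic off $\{f=0\}$ and locally bounded, hence extends holomorphically by Riemann's removable singularity theorem; thus $f \mid g$ in $\C\{x\}$, whence $\sD_2 \subseteq (f) = \sJ$ and therefore $\sD_2 = \sJ$ is principal. Conversely, if $\sD_2 = (g)$, then $g \ne 0$ and, since $a_i^{N!/i} \in \sD_2 = (g)$, we may write $a_i^{N!/i} = g\,b_i$ with $b_i \in \C\{x\}$; the estimate now reads $|g| \le C\,|g|\max_i|b_i|$ near $0$, so $\max_i|b_i| \gtrsim 1$ near $0$, and hence $b_{i_0}(0) \ne 0$ for some $i_0$, i.e.\ $b_{i_0}$ is a unit of $\C\{x\}$. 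Then $(b_2,\ldots,b_n) = \C\{x\}$, so $\sJ = g\cdot(b_2,\ldots,b_n) = (g) = \sD_2$ is principal; moreover $g \mid a_i^{N!/i}$ for every $i$ and $|a_{i_0}^{N!/i_0}| = |g|\,|b_{i_0}| \sim |g|$, which gives the final assertions. One may instead obtain $|g| \sim \max_i|a_i^{N!/i}|$ by combining Proposition~\ref{criterion}(2) with the equivalence $\size_2^{N!/2} \sim \max_i|a_i^{N!/i}|$, the latter deduced from Lemma~\ref{lem:boundedroots} and $a_1=0$ (via $\max_i|\xi_i| \sim \max_{i\ne j}|\xi_i-\xi_j| = \size_2^{1/2}$).

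The main point to watch is that $\sD_2 = \sJ$ is an analytic, not an algebraic, statement: as ideals of $\C[a_2,\ldots,a_n]$ one has $\sD_2 \supsetneq \sJ$ in general, so the argument must genuinely pass through the size estimates above. Within that, the only slightly subtle step is moving from the two-sided bound $|g| \sim \max_i|a_i^{N!/i}|$ to a \emph{single} index $i_0$ with $|a_{i_0}^{N!/i_0}| \sim |g|$; this is exactly where the local ring enters, through the fact that in $\C\{x\}$ an equality $(g) = g\cdot(b_2,\ldots,b_n)$ forces one of the $b_i$ to be a unit.
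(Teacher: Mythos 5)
Your proof is correct, and all the essential moves of the paper's argument are present: the upper bound $|f|\le C\max_i|a_i^{N!/i}|$ for $f\in\sD_2$, the passage from boundedness of a quotient to holomorphy (Riemann extension across the zero set of a nonzero generator), and the detection of a unit among the $b_i=a_i^{N!/i}/g$ at the origin to produce the index $i_0$. Where you differ from the paper is in how the key estimate is obtained. The paper derives it from the roots: in Tschirnhausen form $\xi_i=\frac1n\sum_j(\xi_i-\xi_j)$ gives $\size_2\sim\max_i|\xi_i|^2$, and Lemma~\ref{lem:boundedroots} gives $\max_i|\xi_i|\sim\max_j|a_j|^{1/j}$, whence $\size_2^{N!/2}\sim\max_j|a_j^{N!/j}|$; combined with Proposition~\ref{criterion} this is the assertion that $\sD_2$ and $(a_2^{N!/2},\ldots,a_n^{N!/n})$ have the same integral closure. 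You instead exploit the explicit monomial generators supplied by Proposition~\ref{firststep} and the one-line inequality $|a_2^{e_2}\cdots a_n^{e_n}|\le\bigl(\max_j|a_j|^{1/j}\bigr)^{\sum ie_i}$, which bypasses $\size_2$, the roots, and Lemma~\ref{lem:boundedroots} altogether. Your route is more elementary and self-contained (it only uses the weighted homogeneity), at the price of being special to $\kk=2$, where the explicit generators are available; the paper's route via $\size_\kk$ and Proposition~\ref{criterion} is the one that generalizes to the higher discriminant ideals $\sD_\kk$. Your closing remark that $\sD_2\supsetneq\sJ$ as polynomial ideals, so that the analytic (Riemann extension) step is genuinely needed, is a correct and worthwhile observation.
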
 

\begin{proof}
Shortly speaking, this corollary follows from the fact that, by Proposition \ref{firststep}, the ideals $\sD_{2}$ and 
$(a_2^{N!/2}, \ldots , a_n^{N!/n})$  have the same integral closure.  

More precisely, in the Tschirnhausen case $a_1= 0$, 
\begin{gather*}
		\max_j |\xi_j| = \max_j |\frac{1}{n} \sum_k (\xi_j-\xi_k)| \le \max_j \frac{1}{n} \sum_k |\xi_j-\xi_k| 
		\le  \max_{k\ne j} |\xi_j-\xi_k|, \\
		\max_{k\ne j} |\xi_j-\xi_k| \le \max_{k\ne j} (|\xi_j| + |\xi_k|) \le 2 \max_j |\xi_j|, 
\end{gather*}
and hence, by Lemma \ref{lem:boundedroots},
\begin{align}\label{similarities}
\size_{2}  \sim  \max_{i}  |\xi_i |^2 \sim \max_j |a_j|^{2/j}.
\end{align}  

Thus if $a_i^{N!/i}$ generates $(a_2^{N!/2}, \ldots , a_n^{N!/n})$  then for any $f\in \sD_{2}$, 
$f/ a_i^{N!/i}$ is bounded and hence holomorphic.  Therefore  $a_i^{N!/i}$ generates $\sD_{2}$.  

Conversely, if $g$ generates  $\sD_{2}$ then by \eqref{similarities} one of the $a_i^{N!/i}/g$ does not 
vanish at the origin and hence $g\in (a_2^{N!/2}, \ldots , a_n^{N!/n})$.  
\end{proof}


\section{Convexity}  \label{convexity}

For a power series in one variable $t$, $\lambda \in \C\{t\}$,  
we define its order $\ord \lambda$ as the leading exponent 
$\lambda(t) = a_0t^{\ord \lambda} + \cdots$ and set $\ord 0 := \infty$.  
Given power series $\lambda_i  \in \C\{t\}$,  $i= 1, \ldots , n$,  we define 
for $2\le \kk \le n$
$$
\alpha (\kk) : = \min_{|I|=\kk} \ord \prod_{i\ne j\in I} (\lambda_i - \lambda_j).  
$$

\begin{prop}\label{propconvex}
For $3\le \kk\le n-1$
\begin{align}\label{convex}
2\alpha (\kk) + \alpha (2) \le \alpha (\kk-1) + \alpha (\kk+1) .
\end{align}
\end{prop}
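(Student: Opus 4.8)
The plan is to rephrase \eqref{convex} as a convexity statement for a combinatorial function built from the pairwise contact orders of the $\lambda_i$, and to prove that statement by induction on $n$ using the hierarchical clustering of the roots.

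\textbf{Reduction.} Put $v_{ij}:=\ord(\lambda_i-\lambda_j)\in\N\cup\{\infty\}$. Since $\ord(a+b)\ge\min(\ord a,\ord b)$, the $v_{ij}$ form an ultrametric, $v_{ij}\ge\min(v_{ik},v_{kj})$. Set $\mu:=\min_{i\ne j}v_{ij}$ and, for $1\le m\le n$, $f(m):=\min_{|I|=m}\sum_{\{i,j\}\subset I}v_{ij}$. Because the product defining $\alpha$ runs over ordered pairs, $\alpha(m)=2f(m)$ and $\alpha(2)=2\mu$, so \eqref{convex} is equivalent to
\[
f(m-1)+f(m+1)-2f(m)\ \ge\ \mu\qquad(2\le m\le n-1),
\]
where I allow all such $m$ (the case $m=2$ being the trivial $f(3)\ge 3\mu$) and extend $f$ by $f(0)=f(1)=0$, $f(m)=+\infty$ for $m>n$. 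One may assume $\mu<\infty$, else all roots coincide and everything is infinite. I would prove this inequality by induction on $n$.

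\textbf{Clustering step.} Declare $i\sim j$ iff $v_{ij}>\mu$ (or $i=j$); ultrametricity makes $\sim$ an equivalence relation with classes $P_1,\dots,P_t$, $t\ge2$, each of size $<n$, and $v_{ij}=\mu$ whenever $i,j$ lie in distinct classes. Writing $g_a(k):=\min\{\sum_{\{i,j\}\subset J}v_{ij}:J\subset P_a,\ |J|=k\}$ (with $g_a(0)=g_a(1)=0$) and separating, for an $m$-set $I$ with $k_a:=|I\cap P_a|$, the within-class pairs from the cross-class pairs (of which there are $\binom{m}{2}-\sum_a\binom{k_a}{2}$, each of value $\mu$), one gets the formula
\[
f(m)=\mu\binom{m}{2}+H(m),\qquad H(m):=\min\Big\{\sum_{a=1}^t h_a(k_a):\sum_a k_a=m\Big\},\quad h_a(k):=g_a(k)-\mu\binom{k}{2}.
\]
Now $\mu_a:=\min_{i\ne j\in P_a}v_{ij}\ge\mu$, and the induction hypothesis applied to the ultrametric $v|_{P_a}$ on the smaller set $P_a$ gives $g_a(k-1)+g_a(k+1)-2g_a(k)\ge\mu_a$ at interior $k$, while $g_a(0)+g_a(2)-2g_a(1)=\mu_a$ directly. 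As $k\mapsto\binom{k}{2}$ has constant second difference $1$ and $\mu_a-\mu\ge0$, each $h_a$ is discretely convex on $\{0,\dots,|P_a|\}$. Hence $H$, being the infimal convolution of convex sequences on integer intervals, is convex on $\{0,\dots,n\}$, and therefore
\[
f(m-1)+f(m+1)-2f(m)=\mu\cdot 1+\big(H(m-1)+H(m+1)-2H(m)\big)\ \ge\ \mu ,
\]
which closes the induction (the base case $n=2$ is vacuous). Equality in the configuration $v_{ij}\equiv\mu$ (equally spaced roots) shows the term $\alpha(2)$ on the left of \eqref{convex} cannot be dropped.

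\textbf{Expected obstacle.} The only ingredient that is not purely formal is the lemma that the infimal convolution of convex sequences on integer intervals is again such a convex sequence — its difference sequence is the increasing rearrangement of the concatenated difference sequences of the summands, equivalently it is the restriction to $\Z$ of an infimal convolution of piecewise-linear convex functions on $\R$. A little routine care is also needed with the case of coinciding roots: then $f$ and $H$ take the value $+\infty$ from some index on, but $f$ is non-decreasing and finite on an initial segment, so the target inequality is automatic throughout the infinite range.
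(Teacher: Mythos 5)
Your proof is correct, but it takes a genuinely different route from the paper's. The paper establishes Proposition \ref{propconvex} jointly with Proposition \ref{propsplitting} by a simultaneous induction: after normalizing $\alpha(2)=0$ it splits the roots into just \emph{two} blocks with all cross-orders equal to $0$, derives the interval structure of the optimal splitting indices and the monotonicity of its endpoints $\lk,\rk$ from convexity of the two sub-sequences $\beta,\gamma$, and then recovers \eqref{convex} for $\alpha$ by a short case analysis on where the minimizers for $\kk-1$, $\kk$, $\kk+1$ sit. You instead pass to the \emph{full} ultrametric clustering at the minimal level $\mu$, obtain the exact identity $f(m)=\mu\binom{m}{2}+H(m)$ with $H$ an infimal convolution of the corrected cluster functions $h_a$, and conclude from the standard fact that infimal convolution of discretely convex integer sequences is discretely convex (its difference sequence is the sorted merge of the summands' difference sequences). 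This checks out: the ultrametric inequality for $v_{ij}=\ord(\lambda_i-\lambda_j)$, the identification of every cross-cluster order as exactly $\mu$, the induction on cluster sizes (each $<n$ since the number $t\ge 2$ of clusters is at least two when $\mu<\infty$), the direct treatment of the second difference at $k=1$, and the $+\infty$ convention for coinciding roots are all handled; your version even yields the slightly stronger conclusion that the second difference of $f$ is at least $\mu$ down to $m=2$. What the paper's more involved route buys is Proposition \ref{propsplitting} itself, which is needed independently in Section \ref{splitting} (Propositions \ref{firstsplitting} and \ref{secondsplitting}); your framework would deliver that too (the minimizers of $k_1\mapsto h_1(k_1)+h_2(m-k_1)$ with $h_1,h_2$ convex form an interval whose endpoints increase by at most one as $m$ increases), but you rightly do not need it for \eqref{convex} alone.
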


\begin{example}
If all $\lambda_i - \lambda_j$ have the same order, say equal to $1$, then $\alpha(\kk) = 2 \binom \kk 2$ 
and we have equality.  
\end{example}

For the proof we first make some reduction.  By shifting all $\lambda_i$ by $\frac 1 n \sum_i \lambda_i$ 
 we may assume that 
$\sum_i \lambda_i =0$. Then $\alpha (2) = 2 \min_i \ord  \lambda_i $. Indeed, 
\begin{align*}
		\alpha(2)/2 &= \min_{i\ne j} \ord (\la_i -\la_j) \ge \min_{i\ne j} \min\{\ord \la_i,\ord \la_j\} 
		= \min_i \ord \la_i\\
	    &= \min_i \ord \frac{1}{n} \sum_k (\la_i-\la_k) 
		\ge  \min_i \min_{i\ne k} \ord  (\la_i-\la_k) = \alpha(2)/2.
\end{align*}
Thus,
by dividing all $\lambda_i$ 
by $t^{\alpha (2)/2}$, we may suppose that $\alpha (2) =0$.  Then \eqref{convex} becomes a 
genuine convexity relation.    

Let us  divide the set of $\lambda_i$ into the union of two disjoint non-empty subsets 
$\{ \lambda_i; i\in I\}$ and  $\{ \lambda_j; i\in J\}$, $|I|= n_1<n$, $|J|=n_2<n$, $n_1+n_2=n$, so that 
for each $i\in I, j\in J$, $\ord (\lambda_i - \lambda_j) =0$.  We shall call such a partition 
\emph{a splitting}.  

The corresponding orders for these two families we denote by $\beta(\kk)$, $2\le \kk \le n_1$, and 
$\gamma (\kk) $, $2\le \kk \le n_2$.   Then for each $2\le \kk\le n$ there is $0\le \kk_1 \le n_1$  
such that 
\begin{align}\label{sum}
\alpha (\kk) = \beta (\kk_1) + \gamma(\kk-\kk_1).
\end{align}
It is possible that $\kk_1$ or $\kk -\kk_1$ is equal to 
$0$ or $1$. In this case we put $\beta (0)=\beta (1) =\gamma (0)=\gamma (1) =0$.   

For a couple of integers  $a\le b$ we denote by $[a \ .. \ b]:= \{c\in \Z; a\le c\le b\}$ the set of all integers between $a$ and $b$
and call it an \emph{interval}. 

\begin{prop}\label{propsplitting}  For each $\kk$ the  set of $\kk_1$ such that \eqref{sum} holds is an interval.  If we denote this interval  by $[\lk (\kk) \ .. \ \rk (\kk)]$ then, 
for $2\le \kk \le n-1$, $\lk (\kk) \le \lk (\kk+1) \le \lk (\kk)+1$ and $\rk  (\kk) \le \rk  (\kk+1) \le \rk (\kk)+1$.  
\end{prop}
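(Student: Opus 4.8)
The plan is to reduce the whole statement to the convexity of the two auxiliary order-functions $\beta$ and $\gamma$, and then to run the standard ``monotone comparative statics'' argument for the argmin of an infimal convolution. I keep the reductions made before the statement (so $\alpha(2)=0$ and $\sum_i\lambda_i=0$), but what really enters the argument is only convexity of $\beta$, $\gamma$ together with the expression $\alpha(\kk)=\min_{\kk_1}\big(\beta(\kk_1)+\gamma(\kk-\kk_1)\big)$ contained in \eqref{sum}.

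First I would record that, with the conventions $\beta(0)=\beta(1)=0$ and $\gamma(0)=\gamma(1)=0$, the function $\beta$ is convex on $\{0,1,\dots,n_1\}$ and $\gamma$ is convex on $\{0,1,\dots,n_2\}$; equivalently, the increments $d_\beta(\kk_1):=\beta(\kk_1+1)-\beta(\kk_1)$ and $d_\gamma(\kk_1):=\gamma(\kk_1+1)-\gamma(\kk_1)$ are non-decreasing. For interior indices this is Proposition~\ref{propconvex} applied to the subfamily $\{\lambda_i; i\in I\}$, which gives $\beta(\kk_1-1)-2\beta(\kk_1)+\beta(\kk_1+1)\ge\beta(2)\ge 0$; the index $\kk_1=1$ is immediate since $\beta(0)-2\beta(1)+\beta(2)=\beta(2)\ge 0$; the remaining index $\kk_1=2$ is Proposition~\ref{propconvex} with the convention $\beta(1)=0$ and I would prove it directly from the ultrametric behaviour of orders: for any triple the two smallest of the three numbers $\ord(\lambda_a-\lambda_b)$, $\ord(\lambda_b-\lambda_c)$, $\ord(\lambda_a-\lambda_c)$ coincide, so $\ord\prod_{i\ne j\in K}(\lambda_i-\lambda_j)\ge 3\beta(2)$ for every triple $K\subset I$, whence $\beta(3)\ge 3\beta(2)$, i.e.\ $\beta(1)-2\beta(2)+\beta(3)\ge\beta(2)\ge 0$. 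The same applies to $\gamma$.

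For part (1), fix $\kk$ and consider, on the finite nonempty set of admissible indices (those $\kk_1$ with $0\le\kk_1\le n_1$ and $0\le\kk-\kk_1\le n_2$), the function $F_\kk(\kk_1):=\beta(\kk_1)+\gamma(\kk-\kk_1)$. It is a sum of a convex function of $\kk_1$ and a convex function of $\kk-\kk_1$, hence convex. By \eqref{sum} we have $\alpha(\kk)=\min_{\kk_1}F_\kk(\kk_1)$, so the set of $\kk_1$ realizing \eqref{sum} is exactly $\operatorname{argmin}F_\kk$; the minimizer set of a convex function on consecutive integers is an interval, which I then name $[\lk(\kk)\ ..\ \rk(\kk)]$.

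For part (2), I would use the increment description of the argmin. Convexity gives $\kk_1\in\operatorname{argmin}F_\kk$ iff $F_\kk(\kk_1)-F_\kk(\kk_1-1)\le 0\le F_\kk(\kk_1+1)-F_\kk(\kk_1)$, i.e.\ iff $d_\beta(\kk_1-1)-d_\gamma(\kk-\kk_1)\le 0\le d_\beta(\kk_1)-d_\gamma(\kk-\kk_1-1)$, where I extend $d_\beta,d_\gamma$ by $-\infty$ to the left and $+\infty$ to the right of their ranges so that minimizers at the ends of the admissible range are covered. Writing $\Delta_\kk(\kk_1):=d_\beta(\kk_1)-d_\gamma(\kk-\kk_1-1)$, which is non-decreasing in $\kk_1$, this reads $\lk(\kk)=\min\{\kk_1;\Delta_\kk(\kk_1)\ge 0\}$ and $\rk(\kk)=1+\max\{\kk_1;\Delta_\kk(\kk_1)\le 0\}$. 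Two elementary monotonicity observations then finish the proof: since $d_\gamma$ is non-decreasing, $\Delta_{\kk+1}(\kk_1)=d_\beta(\kk_1)-d_\gamma(\kk-\kk_1)\le\Delta_\kk(\kk_1)$, which forces $\lk(\kk)\le\lk(\kk+1)$ and $\rk(\kk)\le\rk(\kk+1)$; and since $d_\beta$ is non-decreasing, $\Delta_{\kk+1}(\kk_1+1)=d_\beta(\kk_1+1)-d_\gamma(\kk-\kk_1-1)\ge\Delta_\kk(\kk_1)$ (and likewise $\Delta_{\kk+1}(\kk_1)\ge\Delta_\kk(\kk_1-1)$), which forces $\lk(\kk+1)\le\lk(\kk)+1$ and $\rk(\kk+1)\le\rk(\kk)+1$. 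The only genuinely delicate point is the boundary index $\kk_1=2$ in the convexity of $\beta$ and $\gamma$, where Proposition~\ref{propconvex} as stated does not apply and one must invoke the ultrametric property of orders; keeping the $\pm\infty$ conventions for the increments bookkept consistently at the ends of the admissible range of $\kk_1$ is the other thing to watch, but both are routine.
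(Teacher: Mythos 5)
Your proof is correct and takes essentially the same route as the paper: convexity of $\beta$ and $\gamma$ (Proposition~\ref{propconvex} applied to the two subfamilies, within the joint induction of Propositions~\ref{propconvex} and~\ref{propsplitting}) makes $\kk_1\mapsto\beta(\kk_1)+\gamma(\kk-\kk_1)$ convex, so its argmin is an interval, and your monotone-threshold analysis of the increments $\Delta_\kk$ is an equivalent reformulation of the paper's contradiction argument (the paper derives two of the four inequalities and obtains the other two by interchanging $I,\beta$ with $J,\gamma$, whereas you get all four directly from the two monotonicity facts). Your explicit verification of convexity at the boundary index $\kk_1=2$, namely $\beta(3)\ge 2\beta(2)$ with the convention $\beta(1)=0$, addresses a case that Proposition~\ref{propconvex} as stated does not cover and that the paper leaves implicit; it is correctly handled.
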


\begin{proof} [Proof of Propositions \ref{propconvex} and \ref{propsplitting}]

First we show Proposition \ref{propsplitting} assuming Proposition \ref{convex} 
for $\beta$ and $\gamma$.  Let 
$$\varphi_\kk (\kk_1) : =  \beta (\kk_1) + \gamma(\kk-\kk_1).$$
The set of $\kk_1$ such that \eqref{sum} holds is the set of $\kk_1$ at which 
$\varphi_\kk (\kk_1)$ 
is minimal. It is an interval since, by assumption, $\varphi_\kk (\kk_1) $ is convex.   
Moreover, by convexity, $\varphi_\kk$ is decreasing on $[2 \ .. \  \lk(\kk)]$ and increasing on 
$[\rk (\kk) \ .. \ n_1]$.  

We show that it is not possible that $\varphi_\kk(\kk_1) \le \varphi_\kk(\kk_1+1)$ and 
 $\varphi_{\kk+1} (\kk_1+1) \ge \varphi_{\kk+1} (\kk_1+2)$ with one of these inequalities strict. 
  Indeed, if this were 
 the case then  
 \begin{align*}
& \varphi_\kk(\kk_1) + \varphi_{\kk+1} (\kk_1+2)  = \beta(\kk_1) + \gamma (\kk-\kk_1) +  \beta(\kk_1+2) + \gamma (\kk-\kk_1-1 )  \\
&< \varphi_\kk(\kk_1+1) + \varphi_{\kk+1} (\kk_1+1) =  2 \beta(\kk_1+1) + \gamma (\kk-\kk_1-1) +   \gamma (\kk-\kk_1 ) 
 \end{align*}
 which contradicts the convexity of $\beta$.  This implies that  $\rk  (\kk+1) \le \rk (\kk)+1$ and $  \lk (\kk+1) \le \lk (\kk)+1$.
 By interchanging $I, \beta$ and $J, \gamma$ we obtain $\overline K_2(\kk+1) \le \overline K_2(\kk)+1$ and 
	$\underline K_2(\kk+1) \le \underline K_2(\kk)+1$ which are the desired inequalities in view of $\overline K_2(\kk) = \kk -\lk(\kk)$ and 
	$\underline K_2(\kk) = \kk - \rk(\kk)$, where $\underline K_2$, $\overline K_2$ play the roles of 
	$\underline K_1$, $\overline K_1$ for $\ga$. 

 Now we show how Proposition  \ref{propsplitting}  implies Proposition \ref{propconvex} for $\al$.  Fix 
 $\kk$ such that $3\le \kk\le n-1$.  By Proposition   \ref{propsplitting} we may assume 
 $[\lk (\kk-1) \  .. \ \rk (\kk-1)] \cap [\lk (\kk) \  .. \ \rk (\kk)]  \ne \emptyset$.  
 (If this is not the case then $\lk (\kk)=\rk (\kk) = \lk (\kk-1)+1 = \rk (\kk-1)+1$ and thus 
 $\underline K_2(\kk-1) = \overline K_2(\kk-1) = \underline K_2(\kk) = \overline K_2(\kk)$.)  
 
 Fix $\kk_1 \in [\lk (\kk-1) \  .. \ \rk (\kk-1)] \cap [\lk (\kk) \  .. \ \rk (\kk)] $.  Then,  
 by Proposition  \ref{propsplitting}, either  $\kk_1$ or $\kk_1+1$ belongs to $ [\lk (\kk+1) \  .. \ \rk (\kk+1)] $.  
 In the former case 
  \begin{align*}
 \alpha (\kk -1)+ \alpha (\kk +1) &= \beta(\kk _1) + \gamma (\kk - \kk_1-1) +  \beta(\kk_1) + \gamma (\kk-\kk_1+1 )  \\
&\ge  2( \beta(\kk_1) + \gamma (\kk-\kk_1)) = 2\alpha (\kk) .   
 \end{align*}
 In the latter case 
  \begin{align*}
\alpha (\kk-1)+  \alpha (\kk+1) &= \beta(\kk_1) + \gamma (\kk-\kk_1-1) +  \beta(\kk_1+1) + \gamma (\kk-\kk_1 )  \\
&\ge  2( \beta(\kk_1) + \gamma (\kk-\kk_1)) = 2\alpha (\kk),  
 \end{align*}
 since $ \beta(\kk_1+1) + \gamma (\kk-\kk_1-1)\ge  \beta(\kk_1) + \gamma (\kk-\kk_1) $  as $\kk_1 \in  [\lk (\kk) \  .. \ \rk (\kk)] $.  
\end{proof}


\medskip

\section{Splitting}\label{splitting}

We suppose that  $a_i \in \C\{x\}$, $x=(x_1, \ldots , x_r)$, and that $P_a$ factors  
$$
P_a(Z) = P_b(Z) P_c (Z),
$$
where $P_b(Z) = Z^{n_1} + b_1 Z^{n_1-1} + \cdots + b_{n_1}$, 
 $P_c(Z) = Z^{n_2} + c_1 Z^{n_2-1} + \cdots + c_{n_2}$, $b_i, c_j \in \C\{x\}$, $n=n_1+n_2$, $n_1,n_2>0$.  
 
 \medskip
 \noindent
 \textbf{Assumption.} 
 We shall assume that the resultant of $P_b$ and $P_c$ does not vanish at $0$, that is 
 $P_{b(0)}$ and $P_{c(0)}$ do not have common roots.  
 
 \medskip
   In order to distinguish the ideals $\sD_{\kk}$ for 
 polynomials $P_a, P_b$, and $P_c$ we shall denote them by $\sD_{a,\kk}$, $\sD_{b,\kk}$, and 
 $\sD_{c,\kk}$ respectively; likewise for the size functions  $\size_{a,\kk}$, $\size_{b,\kk}$, 
 $\size_{c,\kk}$.

 \begin{prop}\label{firstsplitting} \hfil
\begin{enumerate}
\item
 Suppose that $\sD_{a,\kk}$ is a principal ideal.  Then there are $\kk_1\ge 0$, $\kk_2\ge 0$, 
 $\kk_1 + \kk_2 = \kk$, such that 
 \begin{align}\label{polsplitting}
\size_{a,\kk}  \sim \size_{b,\kk_1} \size_{c,\kk_2}, 
 \end{align}
  and for any such $\kk_1, \kk_2$, 
  $\sD_{b,\kk_1}$ and $\sD_{c,\kk_2}$ are principal.  
  (We put $S_0=S_1=1$.)  Moreover, the set of those $\kk_1$ for which 
  \eqref{polsplitting} holds, with $\kk_2=\kk-\kk_1$,   
  is an interval.  We denote this interval by $[ \lk (\kk) \ .. \  \rk (\kk)]$.  
 \item
If  the ideals $\sD_{a,\kk}$  and $\sD_{a,\kk+1}$ are principal then $\lk (\kk) \le \lk (\kk+1) \le \lk (\kk)+1$ and 
$\rk  (\kk) \le \rk  (\kk+1) \le \rk (\kk)+1$.    
 \item
 Suppose that the ideals $\sD_{a,i}$ are principal for $2 \le i \le \kk$ and that \eqref{polsplitting} holds. 
 Then $\sD_{b,i}$ and $\sD_{c,j}$ are principal for all $i \le \kk_1$ and $j \le \kk_2$.
\end{enumerate}
 \end{prop}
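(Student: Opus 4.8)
The plan is to analyze the orders of contact of the roots branchwise. First I would fix a generic real-analytic arc $x = x(t)$ through the origin (equivalently, work after a generic one-dimensional parametrization, or apply the curve-selection/valuation argument to each branch), so that each root $\xi_i$ of $P_a$ becomes a Puiseux series in $t$, the roots of $P_b$ are $\{\lambda_i ; i \in I\}$ and those of $P_c$ are $\{\lambda_j ; j \in J\}$ with $|I| = n_1$, $|J| = n_2$. The nonvanishing of the resultant at $0$ guarantees that $\ord(\lambda_i - \lambda_j) = 0$ whenever $i \in I$, $j \in J$; thus the partition $I \sqcup J$ is a \emph{splitting} in the sense of Section \ref{convexity}. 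Now by Proposition \ref{criterion}(2), principality of $\sD_{a,\kk}$ is equivalent to $\size_{a,\kk}^{N!/\kk(\kk-1)} \sim |g|$ for a generator $g$, and along our arc this translates into the statement that $\alpha(\kk)$ (the order of $\size_{a,\kk}$ along the arc, in the notation of Section \ref{convexity}) is independent of the arc, i.e.\ equals the minimum over the $\binom{n}{\kk}$ subsets; equivalently $\size_{a,\kk}$ is a normal crossing. Then \eqref{sum} of Proposition \ref{propsplitting} gives precisely $\alpha(\kk) = \beta(\kk_1) + \gamma(\kk - \kk_1)$ for some $0 \le \kk_1 \le n_1$, which is the arc-version of \eqref{polsplitting} with $\kk_2 = \kk - \kk_1$.

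For part (1): the existence of $\kk_1, \kk_2$ with $\size_{a,\kk} \sim \size_{b,\kk_1}\size_{c,\kk_2}$ follows by combining the branchwise identity above with the principality (= normal crossing) of $\size_{a,\kk}$; one must check that the same $\kk_1$ works for all branches, which is exactly where normal-crossingness of $\size_{a,\kk}$ is used — it forces the decomposition \eqref{sum} to be "rigid". Conversely, once \eqref{polsplitting} holds, $\size_{b,\kk_1}$ and $\size_{c,\kk_2}$ must each be normal crossings (a product of nonnegative subanalytic germs is a normal crossing only if each factor is, after checking the factors are monomial-bounded via Proposition \ref{criterion}(1) and Lemma \ref{lem:boundedroots}), hence $\sD_{b,\kk_1}$ and $\sD_{c,\kk_2}$ are principal by Proposition \ref{criterion}(2). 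That the set of admissible $\kk_1$ is an interval is the content of the interval assertion in Proposition \ref{propsplitting}, transported through the $\sim$-relation. Part (2) is then a direct translation of the monotonicity statement $\lk(\kk) \le \lk(\kk+1) \le \lk(\kk)+1$ and similarly for $\rk$ from Proposition \ref{propsplitting}, using that $\sD_{a,\kk}$ and $\sD_{a,\kk+1}$ principal makes both $\size_{a,\kk}$ and $\size_{a,\kk+1}$ normal crossings so that the intervals $[\lk(\kk)..\rk(\kk)]$ are well-defined and the proposition applies.

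For part (3), I would argue downward from $\kk$: since $\sD_{a,i}$ is principal for all $2 \le i \le \kk$, each $\size_{a,i}$ is a normal crossing, so by part (1) applied at level $i$ there is an interval $[\lk(i)..\rk(i)]$ of valid splitting indices, and by part (2) these intervals are "connected" as $i$ varies. Given that \eqref{polsplitting} holds at level $\kk$ with a specific $\kk_1$, I need to produce, for each $i \le \kk_1$, a valid splitting index at level $i$ that is $\le \kk_1$ — and likewise for $\sD_{c}$ — which follows because $\lk(i) \le \lk(\kk) \le \kk_1$ and the intervals step by at most one, so $[\lk(i)..\rk(i)] \cap [2..\kk_1] \ne \emptyset$ whenever $\kk_1 \ge \lk(i)$; combined with part (1)'s converse direction this yields principality of $\sD_{b,i}$ and $\sD_{c,j}$. \textbf{The main obstacle} I anticipate is the reduction to the branchwise (Puiseux) picture done \emph{uniformly}: one must ensure that "$\size_{a,\kk}$ is a normal crossing" really is equivalent to "the branchwise order $\alpha(\kk)$ is the generic minimum for every branch", and that the $\sim$-comparisons in \eqref{polsplitting} can be read off branch-by-branch without losing the global $\sim$; this requires a curve-selection or Łojasiewicz-type argument together with the monomial bounds from Proposition \ref{criterion}(1), and care that the constant in $\sim$ is uniform over branches. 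The purely combinatorial heart — convexity and the interval/monotonicity structure — is already packaged in Propositions \ref{propconvex} and \ref{propsplitting}, so once the dictionary between principality and normal-crossingness is set up, parts (1)–(3) are bookkeeping.
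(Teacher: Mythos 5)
Your overall architecture matches the paper's: factor $\prod_{i\ne j\in I}(\xi_i-\xi_j)$ according to the partition of $I$ into labels of roots of $P_b$ and of $P_c$, use the nonvanishing resultant to make the cross-factor a unit (giving $\size_{a,\kk}\sim\max_{\kk_1+\kk_2=\kk}\size_{b,\kk_1}\size_{c,\kk_2}$), and reduce the interval and monotonicity statements to Propositions \ref{propconvex} and \ref{propsplitting} via the curve selection lemma. Deducing (3) from (1) and (2) by the step-by-one monotonicity of the intervals is also exactly how the paper proceeds.

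There is, however, a genuine gap in your part (1), stemming from the dictionary ``principality of $\sD_{a,\kk}$ $\Leftrightarrow$ $\size_{a,\kk}$ is a normal crossing.'' This equivalence is false: by Proposition \ref{criterion}(2), principality only means $|g|\sim\size_{a,\kk}^{N!/\kk(\kk-1)}$ for some holomorphic $g$ in the ideal, and this generator need not be a monomial (for instance $\sD_2=(a_2^{N!/2},\ldots,a_n^{N!/n})$ is principal whenever some $a_{i_0}^{N!/i_0}$ dominates, with that non-monomial generator; normal crossings appear only after resolution). Consequently the two steps you hang on normal-crossingness --- (i) that a \emph{single} $\kk_1$ works on a full neighbourhood rather than separately on each arc (your ``rigidity''), and (ii) that $\sD_{b,\kk_1}$ and $\sD_{c,\kk_2}$ are then principal --- are not justified. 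The arcwise picture alone yields, for each arc, a nonempty interval of admissible $\kk_1$'s, but gives no common $\kk_1$ nor uniform constants across arcs. The mechanism you are missing is germ-theoretic: writing $\size_{a,\kk}^{N!/\kk(\kk-1)}\sim|f|$ for the generator $f$, and $\size_{b,\kk_1}$, $\size_{c,\kk_2}$ as maxima of moduli of finitely many holomorphic elements of $\sD_{b,\kk_1}$, $\sD_{c,\kk_2}$ (Proposition \ref{criterion}(1)), the relation $\size_{a,\kk}\sim\max_{\kk_1+\kk_2=\kk}\size_{b,\kk_1}\size_{c,\kk_2}$ forces one of the finitely many bounded (hence holomorphic) quotients $gh/f$ to be nonvanishing at the origin; this single unit simultaneously produces a global $\kk_1$ for which \eqref{polsplitting} holds and exhibits $g$, $h$ as generators of $\sD_{b,\kk_1}$, $\sD_{c,\kk_2}$ by Proposition \ref{criterion}(2). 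With that quotient argument in place, the curve-selection reduction is needed only for the interval and monotonicity claims, where your appeal to Propositions \ref{propconvex} and \ref{propsplitting} is correct.
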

 
 \begin{proof}
 For fixed $x$ we may order the roots of $P_{a(x)}$.  Given $I\subset \{1, \ldots , n\}$, $|I|=\kk$, we divide $I=I' \cup I''$, so that $I'$, $|I'|=\kk_1$,  labels the roots of $P_{b(x)}$ and $I''$, $|I''|=\kk_2$,  the roots of $P_{c(x)}$.  Then
 $$
 \prod_{i,j \in I, i\ne j} (\xi_i (x) -\xi_j(x)) = \prod_{i\ne j \in I'} (\xi_i (x) -\xi_j(x))  \cdot  \prod_{i\ne j \in I''} (\xi_i (x) -\xi_j(x)) \cdot \varphi(x), 
 $$
 and $\varphi (x) =  \prod_{i \in I', j\in I''} (\xi_i (x) -\xi_j(x))$ is non-zero and   
 close to $\prod_{i \in I', j\in I''} (\xi_i (0) -\xi_j(0))$.  
Therefore,  as functions of $x$,   
\begin{align}\label{relation}
\size_{a,\kk}  \sim \max_{\kk_1+\kk_2=\kk} \size_{b,\kk_1} \size_{c,\kk_2} .
\end{align}

Suppose that $\sD_{a,\kk}$ is generated by $f$.  By \eqref{relation}  there exist $\kk_1+ \kk_2= \kk$ and 
$h\in \sD_{b,\kk_1}$, $g\in \sD_{c,\kk_2}$ such that 
$(gh/f)(0)\ne 0$.  Then  \eqref{polsplitting} holds   
 and, by Proposition \ref{criterion}, $g$ generates $ \sD_{b,\kk_1}$ and 
 $h$ generates $ \sD_{c,\kk_2}$.

 Thus if  $\sD_{a,\kk}$ is principal  then the set of $\kk_1$ for which \eqref{polsplitting} holds, which we denote by $\mathcal M$,  
 is non-empty.  By the curve selection lemma, see for instance \cite{Milnor68},    
  \eqref{polsplitting} holds  
 if and only if it holds on every real analytic curve $x(t) : (\R,0) \to (\C^r,0)$.  
  Therefore, by  \eqref{polsplitting} and  Proposition~\ref{propsplitting}, 
$\mathcal M$, as  an intersection of intervals, 
 is an interval.  Denote it by $[ \lk (\kk) \ .. \  \rk (\kk)]$.  Moreover, if
  $\sD_{a,\kk}$  and $\sD_{a,\kk+1}$ are principal then
  $\lk (\kk) , \rk (\kk)$ satisfy the inequalities of Proposition \ref{propsplitting}.  Indeed,  suppose for instance that  $\rk (\kk) \le \rk (\kk+1)$ fails.  Then there exists a real analytic curve on which 
 $\size_{a,\kk+1}  \sim  \size_{b,\kk_1} \size_{c,\kk+1 - \kk_1}$ with $\kk_1=  \rk (\kk) $  fails but is satisfied for 
 some $\kk_1< \rk (\kk) $.    
 But this contradicts  Proposition \ref{propsplitting} since \eqref{polsplitting} holds on this curve.  This shows \thetag{1} and \thetag{2}.

Finally 
 \thetag{3} follows from \thetag{1} and \thetag{2}. 
 \end{proof}

 \begin{prop}\label{secondsplitting}
 Suppose that $\sD_{a,i}$ are principal for all $2\le i\le n$.  Let $\kk_1(\kk)$, $\kk_2(\kk) = \kk- \kk_1(\kk)$ 
 be two non-decreasing integer-valued functions defined for $0 \le \kk \le n$, such that  $0\le \kk_1(\kk)\le n_1$, 
 $0\le \kk_2(\kk)\le n_2$, for  every $0\le \kk\le n$, and such that  \eqref{polsplitting} holds 
 for each triple $(\kk, \kk_1(\kk), \kk_2(\kk))$, $0\le \kk\le n$.  
 Let $\sD_{a,\kk}=(f_\kk) $, $\sD_{b,\kk_1}=(g_{\kk_1})$, 
 $\sD_{c,\kk_2}=(h_{\kk_2}) $.  
 
 Then, for every $1\le \kk\le n$ exactly one of the following two cases happens
\begin{enumerate}
\item
$\kk_1(\kk)= \kk_1(\kk-1) +1 $, $\kk_2(\kk)= \kk_2(\kk-1)$. 
 \item
$\kk_2(\kk)= \kk_2(\kk-1) +1 $, $\kk_1(\kk)= \kk_1(\kk-1)$. 
\end{enumerate}
Moreover, if \thetag{1} holds then $g_{\kk_1} | f_\kk | g_{\kk_1}^{\kk}$, and symmetrically, if \thetag{2} holds then 
$h_{\kk_2} | f_\kk | h_{\kk_2}^{\kk}$.  
 \end{prop}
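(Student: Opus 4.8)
The first assertion is purely formal. Since $\kk_1(\kk)+\kk_2(\kk)=\kk$ while $\kk_1(\kk-1)+\kk_2(\kk-1)=\kk-1$, the integers $\kk_1(\kk)-\kk_1(\kk-1)$ and $\kk_2(\kk)-\kk_2(\kk-1)$ — both non-negative because $\kk_1,\kk_2$ are non-decreasing — add up to $1$, so exactly one of them equals $1$; these are the alternatives \thetag{1} and \thetag{2}.

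For the divisibility the plan is to reduce everything to orders of vanishing along real analytic arcs. By the symmetry $b\leftrightarrow c$ it suffices to treat case \thetag{1}; put $\kk_1=\kk_1(\kk)=\kk_1(\kk-1)+1$ and $\kk_2=\kk_2(\kk)=\kk_2(\kk-1)$. By Proposition~\ref{firstsplitting}\thetag{3} the ideals $\sD_{b,i}$ $(i\le\kk_1)$ and $\sD_{c,j}$ $(j\le\kk_2)$ are principal, so $g_{\kk_1}$ and $h_{\kk_2}$ are well defined; when $\kk_1\in\{0,1\}$ one has $g_{\kk_1}=1$ and the assertion amounts to ``$f_\kk$ is a unit'', a boundary case that I would settle directly from \eqref{polsplitting} at $\kk-1$ and at $\kk$. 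So assume $\kk_1\ge2$. By Proposition~\ref{criterion}, $|f_\kk|\sim\size_{a,\kk}^{N!/\kk(\kk-1)}$ and $|g_{\kk_1}|\sim\size_{b,\kk_1}^{N!/\kk_1(\kk_1-1)}$; and since a locally bounded holomorphic function extends holomorphically across a hypersurface, a divisibility $u\mid v$ in $\C\{x\}$ follows once $|v|\lesssim|u|$. Hence the statement reduces to two inequalities between powers of the real-analytic size functions, and by the curve selection lemma (exactly as in the proof of Proposition~\ref{firstsplitting}) it is enough to verify these along every real analytic arc $x(t)\colon(\R,0)\to(\C^r,0)$.

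Along such an arc the roots of $P_a,P_b,P_c$ are Puiseux series in $t$, and $\size_{a,\kk}(x(t))\sim t^{\alpha(\kk)}$, $\size_{b,\kk_1}(x(t))\sim t^{\beta(\kk_1)}$, $\size_{c,\kk_2}(x(t))\sim t^{\gamma(\kk_2)}$, where $\alpha,\beta,\gamma$ are the order functions of Section~\ref{convexity}. Thus the chain $g_{\kk_1}\mid f_\kk\mid g_{\kk_1}^{\kk}$ becomes a two-sided comparison, with the extra factor $\kk$ on one side, between the normalized orders $\alpha(\kk)/(\kk(\kk-1))$ and $\beta(\kk_1)/(\kk_1(\kk_1-1))$. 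Here \eqref{polsplitting} — which holds as germs and hence along the arc — applied at the triples $(\kk,\kk_1,\kk_2)$ and $(\kk-1,\kk_1-1,\kk_2)$ gives $\alpha(\kk)=\beta(\kk_1)+\gamma(\kk_2)$, $\alpha(\kk-1)=\beta(\kk_1-1)+\gamma(\kk_2)$ and $\alpha(2)=\beta(2)+\gamma(2)$, so $\alpha(\kk)-\alpha(\kk-1)=\beta(\kk_1)-\beta(\kk_1-1)$. Feeding these identities into the convexity relation $2\alpha(\cdot)+\alpha(2)\le\alpha(\cdot-1)+\alpha(\cdot+1)$ of Proposition~\ref{propconvex} (used for $\alpha$ and for $\beta$), together with the one-step behaviour of the splitting intervals $[\lk(\kk)..\rk(\kk)]$ of Proposition~\ref{propsplitting} (which also controls how $\kk_1(\kk)$ may move), one extracts the required bounds on $\beta(\kk_1)$ in terms of $\alpha(\kk)$. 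The configuration $\kk_2=0$ is trivial, since then $\gamma(\kk_2)=0$ and $|f_\kk|\sim|g_{\kk_1}|$.

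The reductions — the dichotomy, the passage from $\mid$ to $\lesssim$, from functions to arcs, and from arcs to orders — are routine; the step I expect to be the main obstacle is the last one: squeezing the precise divisibility exponents out of the convexity relations of Section~\ref{convexity} while keeping careful track of the monotone evolution of $\kk_1(\kk)$ and $\kk_2(\kk)$. This is also where the boundary cases $\kk_1\le1$ (and symmetrically $\kk_2\le1$) must be handled by a direct argument rather than through the order comparison.
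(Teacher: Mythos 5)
Your treatment of the dichotomy is correct and coincides with the paper's: since $\kk_1(\kk)+\kk_2(\kk)=\kk$ and both functions are non-decreasing, exactly one of the two increments at step $\kk$ equals $1$. The genuine gap is in the divisibility $g_{\kk_1}\mid f_\kk\mid g_{\kk_1}^{\kk}$, which is the entire substance of the proposition: you reduce it to order inequalities along real analytic arcs and then assert that the required bounds can be ``extracted'' from the convexity relations of Section \ref{convexity}, but you do not carry out that extraction, you yourself flag it as the main obstacle, and it is not in fact how the argument goes. Two concrete points. First, you never invoke the normalization that \eqref{polsplitting} supplies, namely that $g_{\kk_1}h_{\kk_2}$ generates $\sD_{a,\kk}$, so that one may take $f_\kk=g_{\kk_1}h_{\kk_2}$; this is what makes the lower divisibility $g_{\kk_1}\mid f_\kk$ immediate, with no arcs and no convexity needed. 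Your proposed comparison of $\alpha(\kk)/(\kk(\kk-1))$ with $\beta(\kk_1)/(\kk_1(\kk_1-1))$ is not the relevant one, and the identity ``$\alpha(2)=\beta(2)+\gamma(2)$'' you feed into it is false in general: the splitting forces $\alpha(2)=\beta(1)+\gamma(1)=0$ while $\beta(2)$ and $\gamma(2)$ may well be positive.

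Second, and more seriously, the upper bound $f_\kk\mid g_{\kk_1}^{\kk}$ is not a local consequence of convexity at the single index $\kk$; it depends on the entire history of $\kk_1(\cdot)$ and $\kk_2(\cdot)$ below $\kk$. The paper's proof is purely algebraic: writing $f_j=g_{\kk_1(j)}h_{\kk_2(j)}$ for all $j$, letting $r$ be the length of the run of Case~(2) steps immediately preceding $\kk$ (so $f_{\kk-i}=g_{\kk_1-1}h_{\kk_2-i+1}$ for $1\le i\le r$), and using the divisibility chains $f_1\mid f_2\mid\cdots$, $g_1\mid g_2\mid\cdots$, $h_1\mid h_2\mid\cdots$ of Corollary \ref{divides}, one telescopes to $h_{\kk_2}\mid(g_{\kk_1}/g_{\kk_1-1})^{r-1}h_{\kk_2-r+1}$, hence $f_\kk\mid(g_{\kk_1}/g_{\kk_1-1})^{r}f_{\kk-r}$, and concludes by induction on $\kk_1$ from $f_{\kk-r}\mid g_{\kk_1-1}^{\kk-r}$. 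Nothing in your sketch produces the exponent $\kk$: a single application of Proposition \ref{propconvex} only relates three consecutive values of $\alpha$, and Proposition \ref{propsplitting} only constrains one step of the splitting intervals, whereas the exponent accumulates over the whole descent to $\kk_1=1$. Until that inductive, multi-step mechanism (or an arc-wise translation of it) is actually supplied, the proof is incomplete at its central claim.
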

 
 \begin{proof}
Two non-decreasing non-negative functions $\kk_1, \kk_2$ such that $\kk_1(\kk) +\kk_2(\kk)=\kk$ must satisfy 
either (1) or (2).  
Thus it suffices to show that if (1) is satisfied then  $g_{\kk_1} | f_\kk | g_{\kk_1}^{\kk}$.  
The first condition $g_{\kk_1} | f_\kk $  follows easily from the proof of Proposition \ref{firstsplitting}.  If  
\eqref{polsplitting} holds then $g_{\kk_1} h_{\kk_2}$ generates $\sD_{a,\kk}$ and we may 
 suppose that 
 \begin{align}\label{first}
 f_\kk =  g_{\kk_1} h_{\kk_2} . 
 \end{align} 
 By Corollary \ref{divides}, $1= f_1|f_2| \cdots | f_n$, and similarly   $1=g_1|g_2| \cdots | g_{n_1}$ 
 and  $1=h_1|h_2 |\cdots | h_{n_2}$.  
 
Let $r$ be given by  $\kk_1(\kk)= \kk_1(\kk-1) +1 = \kk_1 (\kk-2) +1 = \cdots = \kk_1(\kk-r) + 1 = \kk_1(\kk-r -1) + 2$; 
 we have Case (1) for $m$ and  $m-r$ and Case (2) in between.   
We write $\kk_1 = \kk_1(\kk), \kk_2 = \kk_2(\kk)$ for short.   Then 
  \begin{align*}
  &  f_{\kk-1}  = g_{\kk_1-1} h_{\kk_2} | g_{\kk_1} h_{\kk_2-1}  \\ 
    &\cdots   \\ 
   &  f_{\kk-r+1}  = g_{\kk_1-1} h_{\kk_2-r+2} | g_{\kk_1} h_{\kk_2-r+1}  \\ 
     &  f_{\kk-r}  = g_{\kk_1-1} h_{\kk_2-r+1 }   .  
 \end{align*}
  Hence 
 $$
 h_{\kk_2} | (  {g_{\kk_1}}/ {g_{\kk_1-1}}  )  h_{\kk_2-1} | \cdots | (  {g_{\kk_1}}/ {g_{\kk_1-1}}  ) ^{r-1} h_{\kk_2-r+1} .
 $$
 Consequently 
  $$
 f_\kk = g_{\kk_1} h_{\kk_2} | (  {g_{\kk_1}}/ {g_{\kk_1-1}}  ) ^{r} g_{\kk_1-1}  h_{\kk_2-r+1} 
 =( {g_{\kk_1}}/ {g_{\kk_1-1}}  ) ^{r} f_{\kk-r}.  
 $$
Since  $\kk_1(\kk-r) + 1 = \kk_1(\kk-r -1) + 2$, by induction on $\kk_1$,  $ f_{\kk-r} | g_{\kk_1-1}^{\kk-r }$  (we may start the induction by formally putting $f_1=g_1=h_1=1$), which shows $ f_\kk | g_{\kk_1}^{\kk}$ as $g_{\kk_1-1}|g_{\kk_1}$.   
 \end{proof}
 
 \begin{rem}\label{kbkc}
By Proposition \ref{firstsplitting}, both  $\kk_1(\kk) = \lk (\kk)$ and $\kk_1(\kk) = \rk (\kk)$ satisfy the assumptions 
 of Proposition \ref{secondsplitting} if we complete them by putting $\lk (1)=0$, $\rk (1)=1$.  In particular, there exists a function $\kk_1 (\kk)$ satisfying the assumptions 
 of Proposition \ref{secondsplitting}, such  that $\kk_1(2) = \kk_2(2) =1$. 
This follows from the assumption that $P_{b(0)}$ and $P_{c(0)}$ do not have common roots;
 in particular, not all roots of $P_{a(0)}$ coincide and thus $\size_{a,2} \sim 1$.  
 
Fix such a function $\kk_1 (\kk)$.  For $2\le \kk_1 \le n_1$  define $\kk_b(\kk_1)$ as the smallest $\kk$ such that 
$\kk_1= \kk_1(\kk)$, i.e. $\kk_b(\kk_1)$ as a function is the lowest inverse of $\kk_1(\kk)$.  Similarly we define   
$\kk_c(\kk_2)$ for $2\le \kk_2\le n_2$.  The functions $\kk_b$ and $\kk_c$ are strictly increasing and each $3\le \kk \le n$ is in the image of precisely one of them.  See the example in Figure~\ref{fig1}.
\end{rem}

\begin{figure}[ht]
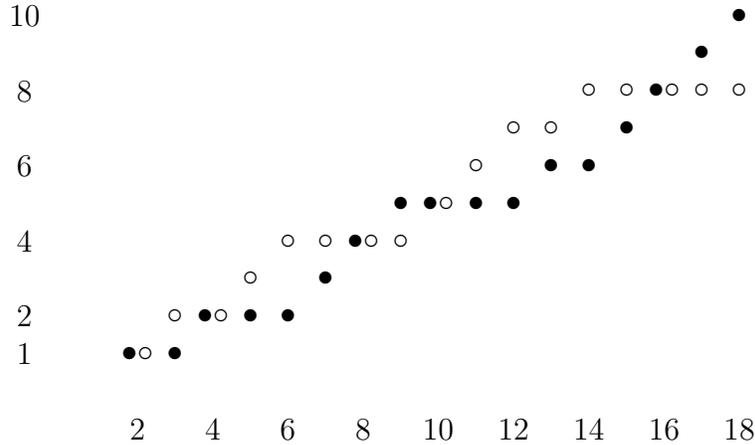

\[
	\xygraph{
	!{<0cm,0cm>;<.5cm,0cm>:<0cm,.5cm>::}
!{(2,1) }*+{\bullet\circ}                    
!{(3,1) }*+{\bullet} !{(3,2) }*+{\circ} 
!{(4,2) }*+{\bullet \circ}
!{(5,2) }*+{\bullet} !{(5,3) }*+{\circ} 
!{(6,2) }*+{\bullet} !{(6,4) }*+{\circ}
!{(7,3) }*+{\bullet} !{(7,4) }*+{\circ}
!{(8,4) }*+{\bullet\circ} 
!{(9,5) }*+{\bullet} !{(9,4) }*+{\circ}
!{(10,5) }*+{\bullet\circ} 
!{(11,5) }*+{\bullet} !{(11,6) }*+{\circ}
!{(12,5) }*+{\bullet} !{(12,7) }*+{\circ}
!{(13,6) }*+{\bullet} !{(13,7) }*+{\circ}
!{(14,6) }*+{\bullet} !{(14,8) }*+{\circ}
!{(15,7) }*+{\bullet} !{(15,8) }*+{\circ}
!{(16,8) }*+{\bullet\circ} 
!{(17,9) }*+{\bullet} !{(17,8) }*+{\circ}
!{(18,10) }*+{\bullet} !{(18,8) }*+{\circ}
!{(19,10) }*+{\bullet} !{(19,9) }*+{\circ}
!{(-1,1) }*+{1}
!{(-1,2) }*+{2}
!{(-1,4) }*+{4}
!{(-1,6) }*+{6}
!{(-1,8) }*+{8}
!{(-1,10) }*+{10}
!{(2,-1) }*+{2}
!{(4,-1) }*+{4}
!{(6,-1) }*+{6}
!{(8,-1) }*+{8}
!{(10,-1) }*+{10}
!{(12,-1) }*+{12}
!{(14,-1) }*+{14}
!{(16,-1) }*+{16}
!{(18,-1) }*+{18}
	}
\]	
\caption{An example for the functions $\kk_1(\kk)$, bullets $\bullet$, and $\kk_2(\kk)$, circles $\circ$. 
In this example $(\kk_b(2),\kk_b(3),\ldots) = (4,7,8,9,13,15,16,17,18)$ and 
$(\kk_c(2),\kk_c(3),\ldots) = (3,5,6,10,11,12,14,19)$; these are the points where the respective sequence
increases by $1$. The two sets form a partition of the integers between $3$ and $19$.} 
\label{fig1}
\end{figure}



\section{Proof of Theorem \ref{roots}} \label{proofofmaintheorem}

\begin{proof}
We first put $P_a$ into Tschirnhausen form \eqref{Tschirnhausen}, then split it into two factors
 after blowing up $\sD_{2}$.  These factors are defined locally on the blow-up space $M_2$.  Then we 
 put each of  these factors into Tschirnhausen form and continue the process  
by splitting the subsequent factors on $M_3$, $M_4$ and so on, putting first every new factor into Tschirnhausen form.    
 At each stage the Tschirnhausen transformation shifts the roots exactly by the term  
 $A_\kk \varphi_\kk $ of \eqref{sums}.  
 
 In order to simplify the notation we  use the same letters for the functions and their pull-backs  to  the blow-up spaces, for instance we write $a_k$ for $a_k\circ \sigma _\kk$.  

\medskip
\noindent
\textbf{Step 1.}
First we perform  the Tschirnhausen transformation by replacing $Z$ by $Z- a_1/n$ 
and hence shifting the roots by  $a_1/n$.  Thus we put  
$$
\varphi _1: = a_1 \quad A_1 := \frac 1 n.
$$  (Recall that the Tschirnhausen transformation does not change the ideals $\sD_{\kk}$.)  
After this transformation we may assume that $P_a (Z)$ is in the Tschirnhausen form \eqref{Tschirnhausen}.  

\medskip
\noindent
\textbf{Step 2.}
Fix $p_2\in M_2$ and a privileged system of coordinates $y_{2,1}, \ldots , y_{2,n}$ at $p_2$.  
 We shall split $P_a$   at $p_2$ 
\begin{align}\label{splittingofP_a}
P_a = P_b P_c ,
\end{align}
where $P_b(Z) = Z^{n_1} + \sum_{i=1}^{n_1}  b_i Z^{n_1-i}$,  
$P_c(Z) = Z^{n_2} + \sum_{j=1}^{n_2}  c_j Z^{n_2-j}$,  with   $n_1>0$,
$n_2>0$, $b_i, c_j \in \C\{y_{2,1}^{1/q_2}, \ldots , y_{2,n}^{1/q_2}\}$, $q_2 = N!$, as follows.  
Let $f_2$ generate $\sD_{2}$ at $p_2$.   
By Corollary \ref{firstblowup}, $f_2$ divides each $a_i^{N!/i}$ and there is $i_0$ such that 
$f_2$ equals $a_{i_0}^{N!/i_0}$ times a unit.  
Consider an auxiliary polynomial  
\begin{align}\label{step2}
  Q_{\bar a}(\tilde Z) :=  f_2^{- n / q_2} P_a (f_2^{1/q_2} \tilde Z) 
  = \tilde Z^n+   \bar a_2 \tilde Z^{n-2}+\cdots+ \bar a_n, 
\end{align}
where $\bar a_i =  f_2^{- i/ q_2 } a_i $, $\bar a_{i_0}(p_2)\ne 0$.  
Because $f_2$ is a normal crossing in $y_{2,i}$, cf.  \eqref{normalcrossings},  
$$f_2^{ 1/ {q_2} } \in  \C\{(y_{2,1}^{1/q_2}, \ldots , y_{2,r_2}^{1/q_2}, y_{2,r_2+1}, \ldots , y_{2,n}) \}.$$

We first split $Q_{\bar a}(\tilde Z) = Q_{\bar b}(\tilde Z) Q_{\bar c} (\tilde Z)$ using the following 
 lemma, see e.g.\ \cite{AKLM98} or \cite{BM90}.

\begin{lem} \label{split}
Let $Q_{a'} = Q_{b'} Q_{c'}$, $a'=(a'_1, \ldots ,a'_n)\in \C^{n}$,  $b'=(b'_1,\cdots, b'_{n_1})\in \C^{n_1}$,  
$c'=(c'_1,\cdots, c'_{n_2})\in \C^{n_2}$,  be monic complex polynomials.  Suppose that $Q_{b'}$ and $Q_{c'}$ 
have no  common root.  Then there are complex analytic mappings  $b (a)$, $c(a)$, defined in a neighborhood 
of $a'$ in $\C^n$,  such that 
 $$Q_a = Q_{b(a)}  Q_{c(a)}, $$
 and $b'= b(a'), c'=c(a')$.  
\end{lem}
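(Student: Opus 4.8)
The plan is to realize the factorization as the local inverse of the polynomial multiplication map and invoke the holomorphic inverse function theorem. Define $\Phi : \C^{n_1} \times \C^{n_2} \to \C^n$ by letting $\Phi(b,c)$ be the coefficient vector of the product $Q_b(Z)\,Q_c(Z)$; explicitly, with the conventions $b_0=c_0=1$, one has $a_k = \sum_{i+j=k} b_i c_j$, so $\Phi$ is a polynomial map (bilinear after the shift by the leading coefficients), and $\Phi(b',c') = a'$ by hypothesis. Since $n_1+n_2=n$, $\Phi$ is a holomorphic self-map of $\C^n$, and it suffices to show that its differential $D\Phi(b',c')$ is invertible: the holomorphic inverse function theorem then produces a local holomorphic inverse $a \mapsto (b(a),c(a))$ defined near $a'$, with $b(a')=b'$, $c(a')=c'$ and $Q_a = Q_{b(a)} Q_{c(a)}$, which is exactly the assertion.

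To compute the differential I would differentiate $Q_b Q_c$ with respect to $(b,c)$ at $(b',c')$: in a direction $(\dot b,\dot c)$ it equals $U\,Q_{c'} + V\,Q_{b'}$, where $U(Z)=\sum_{i=1}^{n_1}\dot b_i Z^{n_1-i}$ and $V(Z)=\sum_{j=1}^{n_2}\dot c_j Z^{n_2-j}$ are arbitrary polynomials of degree at most $n_1-1$ and $n_2-1$, respectively. Thus, under the obvious linear identifications, $D\Phi(b',c')$ is the Sylvester-type map $L : \C[Z]_{<n_1} \times \C[Z]_{<n_2} \to \C[Z]_{<n}$, $(U,V)\mapsto U Q_{c'} + V Q_{b'}$, between vector spaces of equal dimension $n = n_1 + n_2$.

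The key point is that $L$ is an isomorphism precisely because $Q_{b'}$ and $Q_{c'}$ have no common root. Indeed, if $U Q_{c'} + V Q_{b'} = 0$ then $Q_{b'}$ divides $U Q_{c'}$; since $\gcd(Q_{b'},Q_{c'})=1$, $Q_{b'}$ divides $U$, and as $\deg U \le n_1 - 1 < \deg Q_{b'}$ this forces $U=0$, hence $V=0$. So $L$ is injective, and, domain and target having the same finite dimension, bijective. Therefore $D\Phi(b',c')$ is invertible and the inverse function theorem applies.

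I do not anticipate a genuine obstacle; the only things to get right are the identification of the differential with the Sylvester map and the dimension bookkeeping $n_1+n_2=n$. If one wished to avoid the inverse function theorem, an alternative is a Hensel/Newton iteration: solve $Q_a = (Q_{b'}+U_\ell)(Q_{c'}+V_\ell)$ degree-by-degree in $a-a'$, inverting the same map $L$ at each step, and verify convergence of the resulting power series by a standard majorant estimate; but the inverse function theorem route is the cleanest.
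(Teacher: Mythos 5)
Your proposal is correct and follows essentially the same route as the paper: the paper also inverts the multiplication map $(b,c)\mapsto a$ via the holomorphic inverse function theorem, noting that its Jacobian determinant is the resultant of $Q_{b'}$ and $Q_{c'}$, hence nonzero. Your identification of the differential with the Sylvester map $(U,V)\mapsto UQ_{c'}+VQ_{b'}$ and the coprimality argument for its injectivity is exactly the standard proof that this resultant is nonzero, so you have simply supplied the details the paper leaves implicit.
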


\begin{proof}
If we write $ Q_a = Q_{b}  Q_{c}$ and compute $a$ as a function of $b$ and $c$, denoted by $a(b,c)$, then the 
Jacobian determinant of $a(b,c)$ equals the resultant of $Q_b$ and $Q_c$ which is nonzero by assumption.  
Thus the lemma follows from the Inverse Function Theorem.  
\end{proof}

Since $\bar a_{i_0} (p_2) \ne 0$ and $Q_{\tilde a} (\tilde Z) $ is in Tschirnhausen form,  
$Q_{\bar a(p_2)} (\tilde Z) = \tilde Z^n+ \bar a_2(p_2)  \tilde Z^{d-2}+\cdots+\bar a_n (p_2) $ has at least two 
distinct complex roots and thus can be written as the product of two factors with no common roots. Then 
Lemma \ref{split} allows us to extend this splitting to a neighborhood of $p_2$  
\begin{align}\label{splitQ}
Q_{\bar a}(\tilde Z) = Q_{\bar b} (\tilde Z) Q_{\bar c} (\tilde Z).
\end{align}
 This splitting induces a splitting \eqref{splittingofP_a} of 
$P_a $ by setting $P_b(Z)  :=  f_2^{n_1 / q_2} Q_{\bar b} (f_2^{-1/q_2}Z) $, $n_1 = \deg  Q_{\bar b}$,  that is 
\begin{equation} \label{eq:3}
    b_i : =  f_2^{i/q_2}\eta_i \big(y_{2,1}^{1/q_2}, \ldots , y_{2,r_2}^{1/q_2}, y_{2,r_2+1}, \ldots , y_{2,n}\big), 
    \quad i = 1,\ldots,n_1 = \deg P_b,
  \end{equation}
  where $\eta_i$ is a convergent power series.  Similar formulas hold  for $P_c$.   The next step involves putting 
  both $P_b$ and $P_c$ in Tschirnhausen form.  Thus if we put 
  $$\varphi_2 : = b_1 = -c_1$$
   then by  the  Tschirnhausen transformation the roots of $P_b$ are shifted by $-\frac {\varphi_2}{n_1}$ and those of 
  $P_c$ by $ \frac {\varphi_2}{n_2}$.

 \medskip
\noindent
\textbf{Step 3.}  
Let $\sfC= (p_{\kk}, f_{\kk}, h_\kk, P_{\kk,i}, s_\kk , r_\kk)$ be a chain for $p_n\in M_n$ with  neighborhoods  
  $\mathcal U_{\kk} $ of $p_\kk$.  
From $\sfC$ we extract  the chains for $P_b$ and $P_c$ as follows.  
Choose the functions $\kk_1(\kk),$ $\kk_2(\kk)$, $\kk_b(\kk_1), \kk_c(\kk_2)$ as in Remark \ref{kbkc} and split the chain 
\[
  \xymatrix@R=.1cm{
 \mathcal  U_1 & \ar[l]   \mathcal  U_2 & \ar[l] \mathcal  U_3 & \ar[l] \mathcal  U_4 & \ar[l] \mathcal  U_5 & \ar[l] 
 \mathcal U_6 & \ar[l]  \mathcal U_7 & \ar[l] \cdots & P_a \\
&  &  & & & & & \\
&   & \ar[dl] \mathcal  U_{\kk_b(2)} &  & \ar[ll] \ar@{.>}[ddl] \mathcal U_{\kk_b(3)} & \ar[l] \mathcal U_{\kk_b(4)} & & \ar[ll] \cdots & P_b\\
&   \mathcal U_2 &  & & & & & \\
&   &  & \ar[ull] \ar@{.>}[uul] \mathcal U_{\kk_c(2)} &  &  & \ar[lll] \ar@{.>}[uul] \mathcal U_{\kk_c(3)} & \ar[l] \cdots & P_c\\
  }
\]

 We claim that  by Propositions \ref{firstsplitting} and  \ref{secondsplitting}  
\begin{align}\label{towerb}
 \mathcal U_2 \longleftarrow  \mathcal  U_{\kk_b(2)} \longleftarrow  \cdots \longleftarrow \mathcal  U_{\kk_b(n_1)} ,
\end{align}
defines a chain for $p_n$ associated with $P_b$.  Indeed, the pull-back of $\sD_{b,\kk_1}$ on $(\mathcal U_{\kk},p_\kk)$, $\kk=\kk_b(\kk_1)$,  
is invertible  and  generated by a (fractional) normal crossing in $y_{\kk,1}, \ldots , y_{\kk ,r_\kk}$, 
 this generator is denoted  $g_{\kk_1}$ in Proposition \ref{secondsplitting}.  The rest of the data defining the chain 
$ ( h_\kk, P_{\kk,i}, s_\kk , r_\kk)$ is the same.  

Then, if $\kk=\kk_b(\kk_1)\ge 2$, we set 
$ \varphi_{\kk}: =   \varphi_{ b,\kk_1}$.   
By Proposition \ref{secondsplitting}
$$
\varphi_{\kk}  
=   g_{\kk_1}^ {\al_{b,m_1}}    \psi_{b,\kk_1}  
=  f_{\kk}^ {\al_{b,m_1}/\kk}   \psi_{\kk} ,
$$
(this equation defines $ \psi_{\kk}$).
By the  inductive assumption  every root of $P_b$ is a combination of the $ \varphi_{b,i}$.  
Since each root of $P_a$ is either a root of $P_b$ or of $P_c$, the proof is complete.   
\end{proof}


\subsection{Proof of Lemma \ref{addendum}} \label{proofaddendum}

We prove  Lemma \ref{addendum} following closely  the steps of the proof of Theorem \ref{roots}.  

\medskip
\noindent
\textbf{Step 1.}
Clearly $\varphi_1 (t) = a_1(t)$.  Thus after a shift of $\lambda (t)$ by $\frac 1 n \varphi_1(t) $ we may assume 
that $P_a(t)$ is in Tschirnhausen form.  

\medskip
\noindent
\textbf{Step 2.}
The crucial observation is that for all $\kk \ge 2$, 
$f_\kk (a(t))$, $y_{\kk,1} (\lifta_\kk(t)), \ldots , y_{\kk, r_\kk}  (\lifta_\kk(t))$  do not vanish on $J$. 
Hence we may choose their $q_m$-th radicals continuously, 
and even of the same regularity ($C^k$, real analytic, etc.)  as the coefficients $a_i(t)$.   

Thus a root $\lambda (t)$ of $P_{a(t)}$ induces a root $f_2^{-\al_2}(t) \lambda (t) $ of $Q_{\bar a (t)}$, 
whose coefficients are now well-defined as functions of $t\in J$.    
Since the roots of $Q_{\bar b (t)}$, $Q_{\bar c (t)}$, are distinct, $f_2^{-\al_2}(t) \lambda (t) $ is a root of precisely 
one of them.  Thus we may consider $\lambda (t)$ as a root of $P_{b(t)}$ for instance.

\medskip
\noindent
\textbf{Step 3.} 
Then, on $M_\kk$, $\kk=m_b(2)$, perform  the Tschirnhausen transformation of $P_{b(t)}$, split it and 
 by choosing the radical $g_{\kk_1}^ {\al_{b,m_1}}(t)$  identify $\lambda (t)$ (shifted by the Tschirnhausen transformation) with a root of one of these factors.  We continue these procedure until 
the  last factor is of degree $1$.  

\medskip
We note that on $J$ we have $n$ everywhere distinct continuous roots  of $P_{a(t)}$.  
They separate in the above process; any two of them, shifted first  by common Tschirnhausen transformations, are roots of different factors at some stage.

\part{Example.  Roots of cubic polynomials.} 

  \section{Statement of result.}

We give a detailed presentation of the degree $3$ case as an example.  In this case 
 the resolution is explicit and the result can be made more  precise.

\begin{thm} \label{thm:P3}
  Let $I \subset \R$ be a bounded open interval. 
  Consider a monic polynomial 
  \begin{align} \label{P3}
  P(t)(Z)=Z^3+ a_1(t) Z^2 + a_2(t) Z+ a_3(t), \quad t \in I, 
  \end{align}
  with coefficients $a_j \in C^{6}(\overline I)$, $j=1,2,3$. Then:
  \begin{enumerate}
    \item If $\la_j : I \to \C$, $j=1,2,3$, denotes a continuous parameterization of the roots of $P$,
    then each $\la_j' \in L_w^{6/5}(I)$,
    in particular, each $\la_j \in W^{1,q}(I)$, for $q \in [1, 6/5)$.
    \item  
    Let $\{P_\nu; \nu \in \cN\}$ be a family of curves of polynomials  \eqref{P3} so that the set of coefficients 
$\{a_{\nu,j}; \nu \in \cN, j=1,2,3\}$ is bounded in $C^6(\overline I)$. Then the set 
$\{\la_\nu'; \la_\nu \in C^0(I) \text{ with } P_\nu(\la_\nu) = 0 \text{ on } I,\, \nu \in \cN\}$ 
is bounded in $L_w^{6/5}(I)$.
  \end{enumerate}
\end{thm}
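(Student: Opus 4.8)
The plan is to run the general program of Theorem~\ref{roots} and Theorem~\ref{theorem} in the special case $n=3$, where the principalizations in the tower \eqref{tower} are a short, completely explicit sequence of point blow-ups, and then simply read off the numerical constants. First I would normalize by the Tschirnhausen transformation $Z\mapsto Z-a_1(t)/3$: it shifts every root by $a_1(t)/3\in C^6(\overline I)$, leaves $\sD_2,\sD_3$ unchanged, and the shift has derivative in $L^\infty(I)\subset L^{6/5}_w(I)$ bounded by a constant times $\|a_1\|_{C^6(\overline I)}$, so it can be added back at the end. Thus it suffices to treat $P(t)(Z)=Z^3+a_2(t)Z+a_3(t)$ (coefficients relabelled, still in $C^6$) and to bound $\|\la_j'\|_{6/5,w,I}$ in terms of $\max_j\|a_j\|_{C^6(\overline I)}$.

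Next I would write down the tower explicitly. For $n=3$ the only ideals are $\sD_2$ and $\sD_3$: in Tschirnhausen coordinates $(a_2,a_3)$ (weights $2,3$), Corollary~\ref{firstblowup} gives that $\sD_2$ has the integral closure of a power of $(a_2^3,a_3^2)$, while $V(\sD_3)$ is the cuspidal discriminant curve $\{4a_2^3+27a_3^2=0\}$. A short explicit sequence of blow-ups of points (blowing up the origin and iterating in the offending charts to resolve the cusp, with one further blow-up separating the strict transform of the discriminant from the exceptional divisor) principalizes $\sD_2$ and $\sD_3$ simultaneously and makes both zero sets simple normal crossing; this yields $M_2$ (in the cubic case the remaining step $\sigma_{3,2}$ can be taken trivial, $M_3=M_2$) together with explicit privileged coordinate systems $(y_{m,i})$ and local generators $f_m$ of $\sigma_m^*(\sD_m)$ in each of the finitely many charts. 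Following Step~2 of the proof of Theorem~\ref{roots}, on $M_2$ the polynomial always splits as $P=P_bP_c$ with $\deg P_b=1$, $\deg P_c=2$ (it is never a pure cube once $(a_2,a_3)\ne0$); the root of the linear factor is $-b_1=A_2\varphi_2$ with $\varphi_2=f_2^{\al_2}\psi_2(y_2^{1/q_2})$, and after a Tschirnhausen transformation $P_c=Z^2+c_2$ has roots $\pm\sqrt{-c_2}=A_3\varphi_3$ with $\varphi_3=f_3^{\al_3}\psi_3(y_3^{1/q_3})$ and $\al_3=1/2$. Together with $\varphi_1=a_1$ this is the explicit instance of \eqref{sums}, and the $\varphi_m$ can be written out by hand.

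Then I would track the exponents through this explicit chain. Rewriting $\varphi_m=h_m^{\tal_m}\tilde\psi_m(y_m^{1/\tilde q_m})$ as in \eqref{varphiksecond} and inspecting each chart, one checks $\lceil\max\{s_m/\tal_m,\tilde q_m\}\rceil\le 6$ everywhere, with the value $6$ attained: it comes from the square root $\al_3=1/2$ in the quadratic sub-factor against the denominator exponent $s_m$ produced by the iterated blow-up resolving the cusp ($s_m/\al_3=6$), and is in any case forced by the discriminant being weighted-homogeneous of degree $6$ while the roots have weight $1$. Hence $k=k(3)=6$ and $p=p(3)=6/(6-1)=6/5$. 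Proposition~\ref{prop:radicals} and Lemma~\ref{estimate} now give $\varphi_m'\in L^{6/5}_w$ on the relevant open subsets, and summing the three pieces over the finitely many charts and over the connected components of the non-discriminant locus — using \eqref{triangle}, \eqref{subadditivity} and the extension Lemma~\ref{extend} exactly as in Section~\ref{absolutecontinuity} — yields the bound for real-analytic $a$ with discriminant not identically zero; the weighted-homogeneity rescaling removes the assumption that $a(I)$ lie in a fixed neighbourhood of the origin, and Weierstrass approximation together with weak $L^{6/5}$-compactness and Lemma~\ref{comparison} passes to general $a\in C^6(\overline I,\C^3)$, giving (1). Carrying the index $\nu$ along throughout gives (2).

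\textbf{Main obstacle.} The bulk of the work, and the place where slips are likely, is the exponent bookkeeping: one must fix privileged coordinates in every chart of the (non-canonical) principalizations, verify that the $h$ and $P_j$ fed into Lemma~\ref{estimate} are pull-backs of genuine polynomials in $(a_1,a_2,a_3)$ — hence honestly in $C^6(\overline I)$ after composition with $a(t)$ — and confirm that no chart forces the constant strictly above $6$. A secondary subtlety, already present in Step~3 of the proof of Theorem~\ref{roots}, is that the quadratic sub-factor $P_c$ lives only on $M_2$, so its coefficient $c_2$ is a fractional power series and the local-data/chain formalism must be re-run one level down; in the cubic case this is still entirely explicit.
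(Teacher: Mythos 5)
Your proposal follows essentially the same route as the paper's Part 3: an explicit sequence of point blow-ups resolving the cuspidal discriminant (which principalizes $\sD_2$ and $\sD_3$ simultaneously, so the stage $\si_{3,2}$ is indeed trivial), chart-by-chart fractional-power formulas for the roots, the Ghisi--Gobbino radical estimates of Proposition~\ref{prop:radicals} to control the derivatives, and the constants $k=6$, $p=6/5$ read off from the square root of the weighted-degree-$6$ discriminant in the chart where its strict transform meets the exceptional divisor. The only divergence is the endgame: the paper applies the chart estimates directly to $C^6$ coefficients and concludes with Lemma~\ref{extend} alone (using that $\la\equiv 0$ where $(p,q)=(0,0)$), whereas you route through the general scheme's Weierstrass approximation, weak compactness and Lemma~\ref{comparison}, which also works but is heavier than necessary in this explicit case.
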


\begin{rem}
In Theorem \ref{thm:P3} it is actually enough to require that 
$f \in C^{5,1}(\overline I,\C)$; cf.\ \cite{GhisiGobbino13}.
\end{rem}

We sketch below the proof of Theorem \ref {thm:P3}.  Thus consider $P$ after Tschirnhausen transformation  
\begin{align} \label{polynomialdeg3} 
P(Z)=Z^3+p Z+ q .
\end{align}
The discriminant of $P$ equals 
\[
\De= -27q^2 -4 p^3.
\]  

We assume that $p,q : I \to \C$ belong to $C^{6}(\overline I)$. 
By Proposition~\ref{prop:radicals}, for each $\de \in [1/6,1)$, 
\begin{align}
\begin{split}\label{spagnolo}
  & |p'(t)| \le \Lambda_{\de^{-1}} (t) |p(t)|^{1-\de} \\ 
  & |q'(t)| \le \Lambda_{\de^{-1}} (t) |q(t)|^{1-\de} \\ 
  & |\Delta'(t)| \le \Lambda_{\de^{-1}} (t) |\Delta (t)|^{1-\de}
\end{split}  
\quad \text{ a.e.\ in } I \text{ for some } \La_{\de^{-1}} \in L_w^p(I,\R), \text{ where } p = \frac{1}{1-\de}. 
\end{align}
Here we set $\La_{\de^{-1}} :=\max\{\La_{\de^{-1},p},\La_{\de^{-1},q},\La_{\de^{-1},\De}\}$.     
Note that each formula of \eqref{spagnolo} holds for every $t$ outside the zero set of $p$, $q$, or $\De$, respectively.


\section{Resolution of the discriminant}
Consider the embedded resolution of  the discriminant given by a sequence of three point blowing-ups.  We denote it by  $\sigma :M\to \C^2$.  
Note that $\sigma$ resolves also the ideal $\sI= (p^3, q^2)$, that is, makes it locally  principal and generated by a monomial.
Thus in the notation of Section \ref{sec:formulas}, $\sigma$ is a smooth principalization of 
$\sD_2$ and $\sD_3$ at the same time.  Moreover, in this case, the formulas are explicit and we do not have to use the ideal $\sK$ 
of Subsection \ref{coordinates}.

We describe the pull-back of $P$ by $\sigma$ in  the affine charts  and in each chart we give a formula for the roots of $P$.   
These formulas give the bounds on the derivative of the roots with respect to $t$.   

 \subsection{Chart 1}  $p=XY, q=Y$.  This is one of two standard charts of the blowing-up of the origin.  
 Then $\sigma^* \sI = (Y^2)$ and 
 $$
\sigma^* P(Z) = Y( \tilde Z ^3 + XY^{1/3} \tilde Z + 1) =Y \tilde P (\tilde Z) ,
 $$
 where $Z =  Y^{1/3}\tilde Z$.  The polynomial $\tilde P$ has distinct roots near the exceptional divisor $Y=0$.  
 Therefore by the Implicit Function Theorem (IFT) near the exceptional divisor  the roots are of the form 
\begin{align}\label{formula1}
 Z  = Y^{1/3} \Phi(XY^{1/3}) = q^{1/3} \Phi (p/q^{2/3}),
 \end{align}
 where $\Phi $ is an analytic function (given locally by a convergent power series).  Hence by \eqref{spagnolo}
\begin{align}\label{der1}
|Z'(t)| &\le \frac {|q'|}{|q| ^{2/3}}  | \Phi | +   \frac {|p'|}{|q| ^{1/3}}  | \Phi' |  
+   \frac {|p q'|}{|q| ^{4/3}}  | \Phi' |   \le  \Lambda_3 (t)  (| \Phi |  + C| \Phi' | )
\end{align}
taking into account that $p/q=X$ is bounded.

 \subsection{Chart 2}  We take the other standard chart of the blowing-up of the origin $p= x, q= x y$.  The pull-back of the discriminant  is not normal crossing in this chart and we have to blow up the origin again.  
 
 \subsection{Chart 2a}  $p=X, q= X^2Y$. This is one of the standard charts of the blowing-up of the origin 
 of Chart 2: $x=X, y=XY$.  
 Then $\sigma ^* \sI = (X^3)$ and 
$$
 \sigma^* P(Z) = X^{3/2}( \tilde Z ^3 +  \tilde Z + X^{1/2}Y) =X^{3/2} \tilde P (\tilde Z) ,
 $$
  where $Z =  X^{1/2}\tilde Z$.  The discriminant of $\tilde P$ equals $-(4+27 XY^2)$ and is non-zero near the exceptional divisor.   
  Therefore by the IFT  the roots of $\tilde P$ are convergent powers series in $X^{1/2}Y$ and hence the roots of $P$ are of the form 
\begin{align}\label{formula2}
 Z  = X^{1/2} \Phi(X^{1/2}Y) = p^{1/2} \Phi (q/p^{3/2}).  
 \end{align}
 Hence by \eqref{spagnolo} 
 \begin{align}\label{der2}
|Z'(t)| &\le \frac {|p'|}{2|p| ^{1/2}}  | \Phi | +   \frac {|q'|}{|p| }  | \Phi' |  +   \frac {|q p'|}{|p| ^{2}}  | \Phi' |  \le
  \frac 1 2    \Lambda_2 (t)  (| \Phi |  + C| \Phi' | )
\end{align}
taking into account that  $q/p^2=Y$ is bounded.    

 \subsection{Chart 2b}  $p=XY, q= XY^2$.  The Chart 2b is the other standard affine chart obtained by 
 blowing up the origin in Chart 2: $x=x'y', y=y'$.  Again the discriminant is  not normal crossing in this chart so we have to blow-up again.  

 \subsection{Chart 2b(i)}  $p=X^2 Y, q= X^3Y^2$.   This is one of the standard charts of blowing up the origin on Chart 2b, the other one is 2b(ii).  
On Chart 2b(i), $\sigma^* \sI = (X^6Y^3)$ and $\Delta \circ \sigma = X^6Y^3(-4-27Y)$.  Then $$
 \sigma^* P(Z) = X^3 Y^{3/2}( \tilde Z ^3 +  \tilde Z + Y^{1/2}) =X^3 Y^{3/2}  \tilde P (\tilde Z) ,
 $$
  where $Z =  XY^{1/2}\tilde Z$.  On the set where the discriminant of $\tilde P = -(4+27 Y)$  is non-zero  we may again use the IFT.  Then 
 the roots of $P$ are of the form 
 $$
 Z  = XY^{1/2} \Phi(Y^{1/2}) = p^{1/2} \Phi (q/p^{3/2}),
 $$
where $\Phi$ is a convergent power series, as on Chart 2a.  Now 
 \begin{align}\label{der2a}
   \begin{split}
     |Z'(t)| &\le \frac 1 2 \frac {|p'|}{|p| ^{1/2}}  | \Phi | 
     +   \frac {|q^{2/3}|}{|p| }  \frac { |q'|}{|q ^{2/3}| }  | \Phi' |  + \frac 3 2
       \frac {|q|}{|p^{3/2}| } 
      \frac {|p'|}{|p| ^{1/2}}  | \Phi' | \\ 
     & \le   \frac 1 2   \Lambda_2 (t)  | \Phi |  + C  (\Lambda_3 (t) + \frac 3 2 \Lambda_2(t))| \Phi' |. 
   \end{split}
\end{align}

Near $Y= - \frac {4} {27}$ we introduce a new system of coordinates 
$\tilde X = X, \tilde Y = Y+ \frac 4 {27} = 
\frac {q^2 }{p^3} + \frac 4 {27}= 
\frac {27 q^2 + 4 p^3}{27 p^3}$.  Then $Y^{1/2}$ is a convergent power series in  $\tilde Y$.     
At $\tilde X = \tilde Y=0$ the polynomial  $\tilde P$ has one single and one 
double root.  Therefore, by Lemma \ref{split} we can factorize locally 
\begin{align}\label{factorisation}
\tilde P (\tilde Z) = (\tilde Z^2 + b_1(\tilde Y) \tilde Z+ b_2 (\tilde Y)) (\tilde Z+ c(\tilde Y)), 
\end{align}
where $b_1, b_2$ and $c= - b_1$ are convergent power series in $\tilde Y$.  Thus one root of $P$ equals $- XY^{1/2} 
c(\tilde Y)$ and hence it  can be written in the form 
\begin{align}\label{formula3}
Z=  X \Phi(\tilde Y)= \frac {p^2}q \Phi \Big(\frac {q^2}{p^3}\Big)= p^{1/2} \frac {p^{3/2}}q \Phi \Big(\frac {q^2}{p^3}\Big)  
=  q^ {1/3}  \frac {p^2}{ q^{4/3}} \Phi \Big(\frac {q^2}{p^3}\Big) , 
\end{align}
where $\Phi$ is a convergent power series.  Then, taking into account that $p^3 \sim q^2$ near $\tilde X=\tilde Y=0$  
 \begin{align}\label{der3}
   \begin{split}
     |Z'(t)| &\le \frac {2|pp'|}{|q|}  | \Phi | +   \frac {|p^2 q'|}{|q^2| }  | \Phi |  
     +  3 \frac {|q|}{|p^{3/2}|} \frac {| p'  |}{|p^{1/2}| } | \Phi' |
     +  2   \frac {|q^{2/3}|}{|p|}  \frac {|q' |}{|q^{2/3}|}  | \Phi' | \\ 
     & \le  C(\Lambda_2 (t)  + \Lambda_3 (t) )  | \Phi |  + C (3 \Lambda_2 (t) + 2 \Lambda_3(t) )  | \Phi '| 
   \end{split}
\end{align}

Denote the factors of \eqref{factorisation} by $P_b$ and $P_c$.  
The discriminant 
of $P_b$ is the product of the discriminant  of $\tilde P$ that is $- 27 \tilde Y$, and an invertible 
convergent power series in $\tilde Y$.   
Indeed, this follows from the fact that the discriminant of $\tilde P$ equals the discriminant of $P_b$ 
times the square of the resultant of $P_b$ and $P_c$ which is invertible. 

Let $\tilde P_b= Z^2 +\tilde b_2$ be the Tschirnhausen transformation of $P_b$.  The roots of $\tilde P_b$ equal, up to a constant, the square root of the discriminant of $P_b$.  This gives the following form for the remaining two roots of $P$ 
\begin{align}\label{formula4}
Z=  Z_0 \pm Z_1 ,
\end{align}
where $Z_0 = - \frac 1 2 X \Phi(\tilde Y)$ is coming from 
\eqref{formula3}, and $Z_1$ is of the form 
\begin{align}\label{formula5}
Z_1=   X\tilde Y^{1/2}  \Phi_1(\tilde Y) =   \frac {p^{1/2} \Delta ^{1/2} } q \Phi_1 \Big(\frac {q^2} {p^3}\Big)
\end{align}
It remains to give a bound for $Z_1'(t)$,    
\begin{align}\label{der5}
  \begin{split}
    |Z_1'(t)| &\le  \Big(\frac {|p' |}{|p^{1/2}|}  \frac {|\Delta^{1/2} |}{|q|}    +  
 \frac {|p^{1/2}|} {|q^{1/3}|}       \frac {|\Delta ^{1/3}|  }  {|q^{2/3}|}   \frac {| \Delta'|  } {|\Delta ^{5/6}|}   +   
    \frac {|  q'| } {|q^{2/3}| } \frac {|p^{1/2}|} {|q^{1/3}|}   \frac {|\Delta^{1/2} |}{|q|}    \Big)  | \Phi_1 |  \\
      &\qquad +  \frac {|\Delta^{1/2} |}{|q|}   \Big(2 
     \frac {| q^{2/3}| } {|p|}  \frac {|  q'| } {|q^{2/3}|} 
    +   3 \frac {|q^{2}|} {|p^{3}|}   \frac {| p'|} {|p^{1/2}|} \Big) | \Phi_1' |   \\
    &\le C  ( \Lambda_2 (t) + \Lambda_6(t) +\Lambda_3(t) )  | \Phi_1 |  +  C(2\Lambda_3 (t) + 3 \Lambda_2(t) )  | \Phi_1'|.  
  \end{split}
\end{align}
We have used that $p^3 \sim q^2$ and that $\Delta /q^2$ is bounded near $\tilde X= \tilde Y=0$.

 \subsection{Chart 2b(ii)}  $p=XY^2, q= XY^3$.   Then $\sigma^* \sI = (X^2Y^6)$ and $\Delta \circ \sigma = X^2Y^6 (-4X-27)$.  We only consider the points near the origin.  The other points on this chart, including the strict 
 transform  of the discriminant, are also on Chart 2b(i) and were considered before.  On Chart 2b(ii) 
 $$
 \sigma^* P(Z) = X Y^{3}( \tilde Z ^3 + X^{1/3} \tilde Z + 1) = X Y^{3 } \tilde P (\tilde Z) ,
 $$
  where $Z =  X^{1/3}Y\tilde Z$.  Since $\tilde P$ has distinct roots near $X=Y=0$, by the IFT,  
 the roots are of the form 
 $$
 Z  = X^{1/3}Y\Phi(X^{1/3}) = q^{1/3} \Phi (p/q^{2/3}),
 $$
as on Chart 1.  Then 
\begin{align}\label{der1a}
|Z'(t)| &\le \frac {|q'|}{|q| ^{2/3}}  | \Phi | +   \frac {|p'|}{|q| ^{1/3}}  | \Phi' |  
+   \frac {|p q'|}{|q| ^{4/3}}  | \Phi' |  
 \le  \Lambda_3 (t)  (| \Phi |  + C| \Phi' | )
\end{align}
taking into account that $p/q^{2/3}=X$ is bounded.

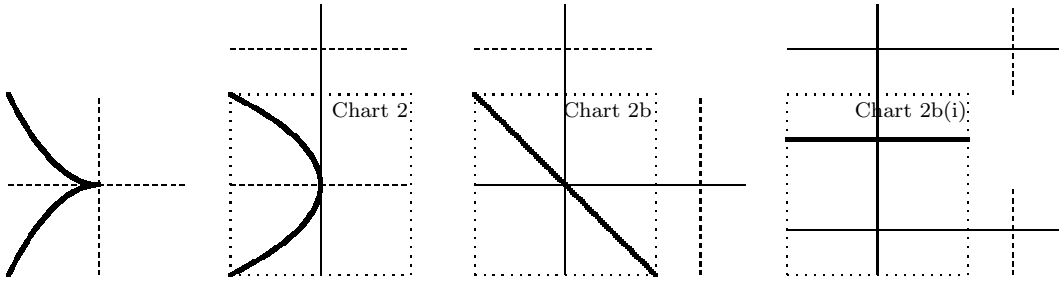
\begin{figure}[ht]
  \centering
\mbox{

\subfigure{
\setlength{\unitlength}{.3cm}
\begin{picture}(8,8)(-4,-4)
\multiput(-4,0)(0.4,0){20}{\line(1,0){0.2}}
\multiput(0,-4)(0,0.4){20}{\line(0,1){0.2}}
\linethickness{.5mm}
\qbezier(0,0)(-2,0)(-4,4)
\qbezier(0,0)(-2,0)(-4,-4)
\end{picture}
}

\subfigure{
\setlength{\unitlength}{.3cm}
\begin{picture}(9,12)(-4,-4)
\multiput(-4,0)(0.4,0){20}{\line(1,0){0.2}}
\multiput(-4,6)(0.4,0){20}{\line(1,0){0.2}}
\put(0,-4){\line(0,1){12}}
\multiput(-4,-4)(0.4,0){20}{\line(1,0){0.05}}
\multiput(-4,4)(0.4,0){20}{\line(1,0){0.05}}
\multiput(-4,-4)(0,0.4){20}{\line(0,1){0.05}}
\multiput(4,-4)(0,0.4){21}{\line(0,1){0.05}}
\put(.5,3){\tiny Chart 2}
\linethickness{.5mm}
\qbezier(0,0)(0,2)(-4,4)
\qbezier(0,0)(0,-2)(-4,-4)
\end{picture}
}

\subfigure{
\setlength{\unitlength}{.3cm}
\begin{picture}(12,12)(-4,-4)
\put(-4,0){\line(1,0){12}}
\multiput(-4,6)(0.4,0){20}{\line(1,0){0.2}}
\multiput(6,-4)(0,0.4){20}{\line(0,1){0.2}}
\put(0,-4){\line(0,1){12}}
\multiput(-4,-4)(0.4,0){20}{\line(1,0){0.05}}
\multiput(-4,4)(0.4,0){20}{\line(1,0){0.05}}
\multiput(-4,-4)(0,0.4){20}{\line(0,1){0.05}}
\multiput(4,-4)(0,0.4){21}{\line(0,1){0.05}}
\put(-.1,3){\tiny Chart 2b}
\linethickness{.5mm}
\qbezier(4,-4)(0,0)(-4,4)
\end{picture}
}

\subfigure{
\setlength{\unitlength}{.3cm}
\begin{picture}(12,12)(-4,-4)
\put(-4,-2){\line(1,0){12}}
\put(-4,6){\line(1,0){12}}
\multiput(6,-4)(0,0.4){10}{\line(0,1){0.2}}
\multiput(6,4)(0,0.4){10}{\line(0,1){0.2}}
\put(0,-4){\line(0,1){12}}
\multiput(-4,-4)(0.4,0){20}{\line(1,0){0.05}}
\multiput(-4,4)(0.4,0){20}{\line(1,0){0.05}}
\multiput(-4,-4)(0,0.4){20}{\line(0,1){0.05}}
\multiput(4,-4)(0,0.4){21}{\line(0,1){0.05}}
\put(-1,3){\tiny Chart 2b(i)}
\linethickness{.5mm}
\qbezier(4,2)(0,2)(-4,2)
\end{picture}
}

}
\caption{Bold curves represent the discriminant set and its strict transforms, thin (continuous) lines the exceptional divisors.}
\end{figure}

\subsection{Proof of Theorem \ref{thm:P3}.}
The proof follows the reasoning of  Subsection   \ref{analyticsubsection}.  

Let $p(t), q(t) \in C^6 (\overline I)$, let $\lambda (t)$ be a continuous root of \eqref{polynomialdeg3},  
and let $\Om = \{t\in I ;  (p(t),q(t)) \ne (0,0)\}$.
Then 
$(p(t),q(t)) |_{\Om}$ lifts to $M$ and Subsection   \ref{analyticsubsection} gives a bound on 
$\|\la'\|_{6/5,w, \Om}$.  Since $\lambda \equiv 0$ on $I \setminus \Om$, Theorem \ref{thm:P3} follows from  
Lemma \ref{extend}.


\medskip


\def\cprime{$'$}
\providecommand{\bysame}{\leavevmode\hbox to3em{\hrulefill}\thinspace}
\providecommand{\MR}{\relax\ifhmode\unskip\space\fi MR }
\providecommand{\MRhref}[2]{%
  \href{http://www.ams.org/mathscinet-getitem?mr=#1}{#2}
}
\providecommand{\href}[2]{#2}

\end{document}